\documentclass[11pt]{article}
\usepackage{subcaption}
\usepackage{amsmath,amsthm,amsfonts,amssymb,amscd}
\usepackage{bbm}
\numberwithin{equation}{section}
\usepackage{xcolor}
\usepackage{cite}
\usepackage[scr]{rsfso}
\usepackage{stackrel}
 \usepackage{amsmath}
\usepackage{enumerate} 
\usepackage{cases}
\usepackage{color}
\usepackage{float}
\usepackage{cases}
\usepackage{soul}
\usepackage{graphicx}
\usepackage{mathpazo}
\usepackage{fancyhdr}
\usepackage{empheq}
\usepackage[margin=1in]{geometry}
\def\disp{\displaystyle}

\def\XX{\mathbb{X}}
\def\YY{\mathbb{Y}}

\def\hat{\widehat}

\def\Bar{\overline}
\def\ra{\rangle}
\def\la{\langle}
\def\ve{\varepsilon}
\def\B{\mathbb{B}}
\def\h{\hfill\Box}
\def\R{\mathbb{R}}
\def\ox{\bar{x}}

\def\oy{\bar{y}}

\def\ov{\bar{v}}

\def\cone{\mbox{\rm cone}\,}
\def\ri{\mbox{\rm ri}\,}

\def\Im{\mbox{\rm Im}\,}

\def\gph{\mbox{\rm gph}\,}

\def\dim{\mbox{\rm dim}\,}

\def\dom{\mbox{\rm dom}\,}
\def\Ker{\mbox{\rm Ker}\,}

\def\bd{\mbox{\rm bd}\,}

\def\cl*co{\mbox{\rm cl}^*\mbox{\rm co}\,}

\def\cl{\mbox{\rm cl}\,}

\def\h{\hfill\triangle}
\def\dn{\downarrow}
\def\O{\Omega}

\def\ph{\varphi}

\def\oR{\Bar{\R}}

\def\lm{\lambda}

\def\Lm{\Lambda}

\def\hs7{\hspace*{7pt}}

\def\Id{\mathbb{I}}

\def\lange{\langle}

\renewcommand{\theequation}{\thesection.\arabic{equation}}

\def\h{\hfill\Box}
\def\kk{\kappa}
\begin{document}

\newtheorem{Theorem}{Theorem}[section]
\newtheorem{Conjecture}[Theorem]{Conjecture}
\newtheorem{Proposition}[Theorem]{Proposition}
\newtheorem{Remark}[Theorem]{Remark}
\newtheorem{Lemma}[Theorem]{Lemma}
\newtheorem{Corollary}[Theorem]{Corollary}
\newtheorem{Definition}[Theorem]{Definition}
\newtheorem{Example}[Theorem]{Example}
\newtheorem{Fact}[Theorem]{Fact}
\newtheorem*{pf}{Proof}
\renewcommand{\theequation}{\thesection.\arabic{equation}}
\newcommand*\samethanks[1][\value{footnote}]{\footnotemark[#1]}
\normalsize
\normalfont
\medskip
\def\endproof{$\h$\vspace*{0.1in}}
\title{\bf Stable Recovery of Regularized Linear Inverse Problems}
\date{}
\author{Tran T. A. Nghia\thanks{Department of Mathematics and Statistics, Oakland University, Rochester, MI 48309, USA. Emails: nttran@oakland.edu; hnpham@oakland.edu;  nghiavo@oakland.edu} \and Huy N. Pham \samethanks \and  Nghia V. Vo  \samethanks }

\maketitle
\begin{abstract}
Recovering a low-complexity signal from its noisy observations by regularization methods is a cornerstone of inverse problems and compressed sensing. Stable recovery ensures that the original signal can be approximated linearly by optimal solutions of the corresponding Morozov or Tikhonov regularized optimization problems.  In this paper, we propose new characterizations for stable recovery in finite-dimensional spaces, uncovering the role of nonsmooth second-order information. These insights enable a deeper understanding of stable recovery and their practical implications. As a consequence, we apply our theory to derive new sufficient conditions for stable recovery of the analysis group sparsity problems, including the group sparsity and isotropic total variation problems. Numerical experiments on these two problems give favorable results about  using our conditions to test stable recovery.

\end{abstract}

\noindent{\bf Mathematics Subject Classification (2020)}. 49J52, 49J53, 49K40, 52A41, 90C25, 90C31
\vspace{0.1in}

\noindent{\bf Keywords:} Stable Recovery, Linear Inverse Problems, Regularization Methods, Second-Order Analysis, Group Sparsity, Isotropic Total Variation Problems 

\section{Introduction}
Recovering a signal $x_0$ in a Euclidean space $\XX$ from its observation $y_0=\Phi x_0$ in another Euclidean space $\YY$ over a linear operator $\Phi \in \mathscr{L}(\XX,\YY)$ is a typical linear inverse problem in science and engineering. Solving the linear  system 
\begin{equation}\label{p:LE}
    \Phi x=y_0
\end{equation}
cannot recover exactly $x_0$ in general; especially when the system is ill-posed, there may be infinitely many solutions. By leveraging the prior low-complexity information of $x_0$ such as sparsity, group sparsity, or low-rank, adding a regularizer $R:\XX\to \R$ as an objective function and solving the following optimization problem
\begin{equation}\label{p:BP}
 \min_{x\in \XX}   \quad R(x)\quad \mbox{subject to}\quad \Phi x=y_0 
\end{equation}
is a very successful technique in recovering $x_0$ \cite{CT05,DET05,CR09,FR13,GSH11,BB18,VPF15}. A classical choice for $R(x)$ is the Euclidean norm squared \cite{EHN96,M93,TA77}. In this case,  problem \eqref{p:BP} gives the minimum norm solution to the linear system, which is the projection from the origin $0\in \XX$ to the feasible affine space \eqref{p:LE}, also known as  the {\em Moore-Penrose operator} or the {\em generalized inverse} of $\Phi$ at $y_0$. However, this solution is usually not the original signal $x_0$ that we want to recover. In many applications in compressed sensing, image processing, and machine learning, the regularizer $R$ is chosen as a {\em nonsmooth} convex function such as the $\ell_1$ norm, the $\ell_1/\ell_2$ norm, the total variation seminorm, and the nuclear norm; see, e.g.,   \cite{BDE09,BB18,CR09,CP11,FR13} for the influence of  this regularization technique in different areas.
 
Figure 1 below illustrates the success rate of the above regularization method when $\Phi\in \R^{m\times n}$ is a uniform Gaussian matrix for  problem \eqref{p:BP} in two different cases: (a) The {\em group sparsity regularization problems} \cite{YL06,G11,FPVDS13,RRN12} with the $\ell_1/\ell_2$ norm  $R(x)=\|x\|_{1,2}$ in $\R^n$ of recovering signals $x_0\in\R^{2000}$ with $100$ non-overlap groups of  $20$ elements and $10$ nonzero active groups; (b) The {\em isotropic total variation problems} \cite{ROF92,CP11} with the regularizer  $R(x)=\|\nabla x\|_{1,2}$ of recovering images $x_0\in \R^{28\times 28}$, where $\nabla x$ is the {\em discrete gradient operator} of image $x\in \R^{28\times 28}$. For each $m$, we use the {\em cvx package} \cite{DB16} to solve $100$ problems \eqref{p:BP}   generated with  $100$ different random matrices $\Phi$. The green curve measures the percentage of problems that are able to recover the original signals/images; see our Section~5 for further details. 

\begin{figure}[h]
\centering
\centering
\includegraphics[width=1\linewidth]{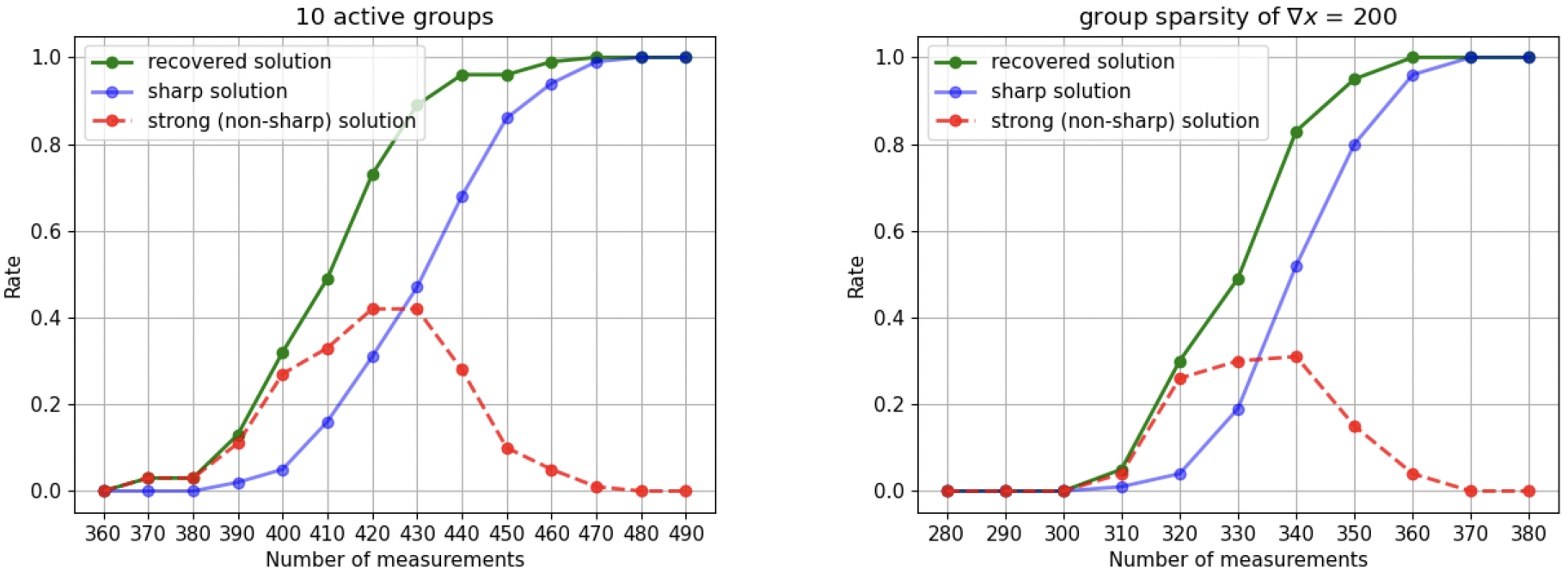} 

\caption{Proportion of cases for which $x_0$ can be recovered as a function of $m$.}
\end{figure}

 When there is noise or small perturbation presented in the observation $y\in \YY$ with $\|y-\Phi x_0\|\le \delta$ for some error level $\delta>0$, the original $x_0$ may be recovered approximately by solving some related optimization problems to \eqref{p:BP} such as the {\em Morozov regularization problem} \cite{M93}
\begin{equation}\label{p:BP1}
 \min_{x\in \XX}   \quad R(x)\quad \mbox{subject to}\quad \|\Phi x-y\|\le \delta,
\end{equation} 
or its {\em Lagrangian relaxation} (also known as the {\em Tikhonov regularization problem} \cite{TA77})
\begin{equation}\label{p:Lass}
 \min_{x\in \XX}\quad  \frac{1}{2}\|\Phi x-y\|^2+\mu R(x) 
\end{equation} 
with $\mu>0$ being known as the {\em tuning or regularization parameter}. When $R(x)=\|x\|_1$ is the $\ell_1$ norm that is well-known to  promote sparsity for optimal solutions and $\mu$ is proportional to $\delta$, \cite{CT05,GSH11} showed that any solution $x_\delta$ of \eqref{p:BP1} and $x_\mu$ of \eqref{p:Lass} satisfy
\begin{equation}\label{eq:Com}
    \|x_\delta-x_0\|=O(\delta)\quad \mbox{and}\quad \|x_\mu-x_0\|=O(\delta)
\end{equation}
if and only if $x_0$ is a unique optimal solution of problem \eqref{p:BP}. The property \eqref{eq:Com} will be referred to as {\em stable recovery} \cite{CRT06,H13,FPVDS13} at $x_0$ in our paper (also known as the  linear convergence rate for regularization methods in inverse problems literature, e.g., \cite{GSH11,G11}). This elegant characterization for stable recovery can be extended to the class of  {\em piecewise linear convex regularizers} due to their spontaneous {\em polyhedral} structures. Beyond this class,  various sufficient conditions have been proposed to ensure stable recovery; for example, the {\em Null Space Property} established in \cite{DX01} for $\ell_1$ problems and extended for more general regularizers in \cite{FPVDS13,FR13,VGFP15,VPF15}, the {\em Nondegeneracy Source Condition} together with the {\em Restricted Injectivity} \cite{GSH11, CR13, VPF15}, and the {\em Minimum Gain Property} of the linear operator $\Phi$ over the {\em descent cone} at $x_0$ \cite{CRPW12}. In the recent paper \cite{FNT23}, these properties at $x_0$ are proved to be equivalent to {\em Sharp Minimum Property} at $x_0$ for problem \eqref{p:BP}, which was introduced by Polyak in \cite{P87} and Crome \cite{C78} with a different name {\em Strong Uniqueness}. In Figure~1,  the blue curve demonstrates the percentage of problems that recover sharp optimal solutions based on a numerical characterization for sharp minima in \cite{FNT23} with respect to the number of measurements $m$. Not all recovered solutions of the $\ell_1/\ell_2$ problems in Figure~1 are sharp, but they are all {\em strong optimal solutions} \cite{FNT23}. The gap between the green curve and blue curve is illustrated by the red one, which signifies the percentage of cases of strong (non-sharp) optimal solutions of problem \eqref{p:BP}; see our Section~5 for further explanations and numerical experiments. 

Motivated by \cite{CT05,GSH11,G11,H13,FNT23, FPVDS13,VPF15}, it is natural to question: Does stable recovery always occur at $x_0$ when it is an  unique optimal solution of problem \eqref{p:BP}? The quick answer is no; see, e.g., our Example~\ref{ex:NoSR} for a simple group sparsity regularization problem. Unlike \cite{CT05,GSH11}, solution uniqueness is not enough to guarantee stable recovery in {\em non-polyhedral} regularized linear inverse problems. As most of papers in this direction have used sufficient conditions for stable recovery that lead to sharp minima, another intuitive  question should be: Can stable recovery occur at the unique non-sharp  optimal solution $x_0$ of problem~\eqref{p:BP}? Our simple Example~\ref{ex:l12} gives an affirmative answer for a group sparsity regularization problem, at which $x_0$ is a unique non-sharp optimal solution of ~\eqref{p:BP}, i.e., the Null Space Property \cite{DX01,FPVDS13,FR13,VGFP15,VPF15}, the Nondegeneracy Source Condition and Restricted Injectivity \cite{GSH11,CR13,VPF15}, and the Minimum Gain Property \cite{CRPW12} all fail. Stable recovery does occur beyond sharp minima and those conditions. It brings back the fundamental  question that we will try to answer throughout the paper: When does stable recovery occur at (unique non-sharp) optimal solutions of nonpolyhedral regularized  optimization problems \eqref{p:BP}?\\


\noindent {\bf Our contributions.} 
\begin{itemize}
\item We derive a full characterization of stable recovery for the general case of convex regularizers in Theorem~\ref{thm:Rob}. The theorem states that the optimal solution  $x_0$ of problem~\eqref{p:BP} is {\em stably recoverable} in the sense of \eqref{eq:Com} if and only if 
\begin{equation}\label{eq:CSR}
    \Ker \Phi\cap T_{\partial R^*({\rm Im}\, \Phi^*)}(x_0)=\{0\}, 
\end{equation}
where $ \Ker \Phi$ is the kernel/null space of $\Phi$ and  $T_{\partial R^*({\rm Im}\, \Phi^*)}(x_0)$ is the {\em contingent/tangent cone} \cite{AF90,RW98} acting to the image of the {\em subdifferential mapping} of the {\em Fenchel conjugate} regularizer $R^*$ over the range/image of the adjoint operator of $\Phi$ (i.e.  $\Phi^*$) at $x_0$. This is a second-order condition, as the action of the tangent cone is considered taking another ``derivative'' from the subdifferential set $\partial R^*(\Im \Phi^*)$. To the best of our knowledge, all the previous papers in this direction used first-order analysis to obtain sufficient conditions for stable recovery. Our paper reveals  second-order information for stable recovery. This truly distinguishes our approach from other aforementioned papers. Some conditions similar to \eqref{eq:CSR} were obtained recently in \cite{FNP23,FNP24} to characterize strong minima of problem~\eqref{p:BP}, but strong solutions are not enough to guarantee \eqref{eq:CSR} or stable recovery. Sharp minimum property at $x_0$ implies \eqref{eq:CSR}, but not vice versa. When $R$ is a twice differentiable function, condition \eqref{eq:CSR} holds provided that  $x_0$ is a strong solution. As strong minima often incorporate second-order properties \cite{BS00,RW98}, this reconfirms the hidden second-order information behind stable recovery. We also extend \cite[Theorem~4.7]{GSH11} to a broad class of {\em convex piecewise linear-quadratic functions} \cite{RW98} by proving  that stable recovery occurs at $x_0$ if and only if it is a unique optimal solution of problem~\eqref{p:BP}. This class contains many important regularizers in statistics, optimization, and machine learning such as the {\em elastic net} \cite{ZH05}, {\em Huber norm} \cite{H73}, and the {\em discrete Blake-Zisserman regularizer} \cite{BZ87}. It is worth noting that a unique optimal solution of the convex piecewise linear-quadratic function is actually a strong one. This again gives the feeling that stable recovery is closer to the strong minimum property than the sharp minimum one. 

\item We apply our theory to the particular class of analysis group sparsity seminorm $R(x)=\|D^*x\|_{1,2}$, where $D$ is an ${n\times p}$ matrix and $\|\cdot\|_{1,2}$ is the $\ell_1/\ell_2$ norm in $\R^p$ that has been popularly used in statistics \cite{YL06} and image processing \cite{CP11,ROF92}. When $x_0$ is a unique solution of problem \eqref{p:BP}, \cite[Corollary~5.10]{FNT23} showed that both $x_\delta$ and $x_\mu$ converge to $x_0$ with rate $\delta^\frac{1}{2}$. We advance that result by proving that the convergence rate can be improved to the linear rate $\delta$, i.e., stable recovery occurs under some new sufficient conditions in Corollary~\ref{Cor:Equi} and Corollary~\ref{cor:Id} that are quite simple to verify and  totally independent of sharp minima  \cite{FPVDS13,FNT23,G11,H13} and other aforementioned conditions \cite{FR13,CR13,CRPW12}.

\item Finally, we provide numerical experiments for the case of group sparsity and isotropic total variation regularization problems. Our experiments indicate some interesting phenomena that most of strong (non-sharp) optimal solutions of  group sparsity regularization problems are stably recoverable. 


\end{itemize}

{\bf Outline of the Paper.} Section 2 provides preliminary on basic variational analysis used throughout the paper. Section 3 presents our main result Theorem~\ref{thm:Rob}, a second-order characterization of stable recovery for general regularized linear inverse problems  together with its consequences when $R$ is a smooth convex function or a convex piecewise linear-quadratic function. In Section 4, we apply our results to the analysis group sparsity problems and  establish new sufficient conditions for stable recovery that are independent of sharp minima. Section 5 provides numerical experiments for group sparsity and isotropic total variation problems, illustrating the practical relevance of our theory and motivating future research directions in Section 6.

\section{Preliminary}
Throughout the paper, $\XX$ and $\YY$ are  Euclidean spaces. We denote  $\la\cdot,\cdot\ra$ by the inner product in these spaces  and $\|\cdot\|$ by the corresponding  Euclidean norm.  The set $\B_r(x)$ represents the closed ball  in $\XX$ with center $x\in \XX$ and radius $r>0$.  For any linear operator $\Phi\in \mathscr{L}(\XX,\YY)$ between the two Euclidean spaces, we denote  $\Ker \Phi$ and $\Im \Phi$ by the kernel/null space of $\Phi$ and the range/image space of $\Phi$, respectively.

Let $\ph:\XX \to \oR:=\R\cup\{\infty\}$ be a proper lower semi-continuous (l.s.c.) extended real-valued convex function with the domain $\dom \ph:=\{x\in \XX|\; \ph(x)<\infty\}$. The {\em Fenchel conjugate} of $\ph$ is also a l.s.c. extended real-valued convex function $\ph^*:\XX\to \oR$ defined by 
\begin{equation}\label{eq:Fen}
    \ph^*(v):=\sup\{\la v,x\ra-\ph(x)|\, x\in \XX\}\quad \mbox{for}\quad v\in \XX.  
\end{equation}
The {\em subdifferential} of $\ph$ at $x\in\dom \ph$ is described by
 \[
 \partial \ph(x):=\{v\in \XX|\; \ph(u)-\ph(x)\ge \la v, u-x\ra\;\mbox{for all}\; u\in \XX\}. 
 \]
It follows that 
\begin{equation}\label{eq:FR}
     \partial \ph(x)=\{v\in \XX|\, \ph^*(v)+\ph(x)=\la v,x\ra\}.
\end{equation}
A particular example of convex function that can take infinity value is the {\em indicator function} to a closed convex set $C\subset \XX$ denoted by $\iota_C(x)$, that is  $0$  if $x\in C$ and $\infty$ otherwise. The subdifferential of $\delta_C$ at $x\in C$ is known as the {\em normal cone} to $C$ at $x$:
\begin{equation}\label{eq:Nor}
    N_C(x):=\{v\in \XX|\; \la v, u-x\ra\le 0\; \mbox{for all}\; u\in C\}.
\end{equation}
Throughout the paper, $\ri C$ represents the set of all {\em relative interior} points of $C$
\begin{equation}\label{def:Ri}
\ri C:=\{x\in C|\; \exists\, \epsilon>0:\; \mathbb{B}_\epsilon(x)\cap{\rm aff}\,C\subset C\},
\end{equation}
where ${\rm aff}\,C$ is the {\em affine hull} of $C$. Moreover, $\cone C$ is denoted by the {\em conic hull} of $C$; see \cite{R70} for these important notions in convex analysis.

The following geometric definition of the {\em contingent/tangent cone} \cite{AF90,BS00,RW98} plays a crucial role in our analysis.  
\begin{Definition}[Tangent/contingent cone] Let $\Omega$ be a closed (possibly nonconvex) set of $\XX$. The contingent/tangent cone to $\Omega$ at $x_0 \in \O$ is defined by
\begin{equation}\label{eq:Tangent}
T_{\O}(x_0) := \{w \in \XX |\, \exists\, t_k \downarrow 0, w_k \to w: x_0 + t_kw_k \in \Omega \}.
\end{equation}
If additionally, $\O$ is convex, we have
\begin{equation}\label{eq:TanC}
T_{\O}(x_0)= \cl\left[\cone(\O-x_0)\right],
\end{equation}
which is the closure of the conic hull of $\O-x_0$. 
\end{Definition}

Next, let us recall the definitions of sharp and strong optimal solutions/minimizers  \cite{BS00,C78,P87,RW98}, the two key players  in this paper. 
 
\begin{Definition}[Sharp and strong minima] We say that the function $\ph:\XX\to \oR$ has a sharp minimum at  $\ox\in \dom \ph$ or $\ox$ is a sharp optimal solution/minimizer of $\ph$ if there exist constants $c,\ve>0$ such that 
\begin{equation}\label{eq:Sha}
    \ph(x)-\ph(\ox)\ge c\|x-\ox\|\quad \mbox{for all}\quad x\in  \B_\ve(\ox).
\end{equation}
Moreover, the function $\ph$ is said to have a strong minimum at $\ox$  or $\ox$ is a strong optimal solution/minimizer of $\ph$ if  there exist constants $\kk, \delta>0$ such that 
\begin{equation}\label{eq:Str}
    \ph(x)-\ph(\ox)\ge \frac{\kk}{2}\|x-\ox\|^2\quad \mbox{for all}\quad x\in  \B_\delta(\ox).
\end{equation}
\end{Definition}
When $\ph$ is a convex function, it is well-known that $\ve$ in \eqref{eq:Sha} can be chosen as infinity, i.e., sharp minimum at $\ox$ is a global property for convex functions. On the other hand, strong minimum at $\ox$ is usually a local property. Sharp minimum introduced by Polyak \cite{P87} and Crome \cite{C78} with a different name ``Strong uniqueness'' is an important notion in optimization that has substantial impacts on convergence analysis in many iterative algorithms.  When the function $\ph$ is a  {\em piecewise linear convex} function \cite[Definition~2.47]{RW98}, i.e., its epigraph is polyhedral,  $\ox$ is a sharp optimal solution of $\ph$ if and only if it is  a unique optimal solution. This is one of the main reasons that solution uniqueness is enough to characterize stable recovery  for $\ell_1$ linear inverse problems \cite{FNT23,GSH11}. 

Obviously, the property of strong minimum is stronger than sharp minimum. Beyond polyhedral optimization structures, strong minimum is a more conventional property that has been used extensively in nonlinear (possibly nonconvex) optimization problems \cite{BS00,RW98} with different applications to stability theory and algorithms. To characterize sharp and strong optimal solutions, it is standard to use {\em directional derivative} and {\em second  subderivative}  \cite{P87,BS00,RW98} of convex functions.
\begin{Definition}[directional derivative and second subderivative]  The directional derivative of convex function $\ph$ at $\ox\in \dom \ph$ is the function $d\ph(\ox):\XX \to\oR$  defined by
\begin{equation}\label{eq:DD}
d\ph(\ox)(w):=\lim_{t\dn 0}\dfrac{\ph(\ox+tw)-\ph(\ox)}{t}\quad \mbox{for}\quad w\in \XX.
\end{equation}
The second subderivative of $\ph$ at $\ox$ for $\ov\in \partial \ph(\ox)$ is the function $d^2 \ph(\ox|\ov):\XX\to \oR$  defined by   
\begin{equation}\label{eq:SD}
d^2\ph(\ox|\ov)(w): =\liminf_{t\dn 0, w^\prime \to w}\dfrac{\ph(\ox+tw^\prime)-\ph(\ox)-t\la \ov, w^\prime\ra}{\frac{1}{2}t^2}\quad \mbox{for} \quad w\in \XX.
\end{equation}
\end{Definition}
When $\ph$ is continuous around $\ox$, it is well-known that the subdifferential set $\partial \ph(\ox)$ is  nonempty and compact  in  $\XX$. Note from  \cite[Theorem~8.30]{R70} that 
\begin{equation}\label{eq:DDmax}
    d\ph(\ox)(w)=\max\{\la v,w\ra|\; v\in \partial \ph(\ox)\}\quad \mbox{for}\quad w\in \XX.
\end{equation}
The calculation of $d^2\ph(\ox|\ov)(w)$ is more complicated; see  \cite{BS00,RW98} for its computation in different classes of functions. 
When $\ph$ is twice-differentiable at $\ox$, we have
\begin{equation}\label{eq:smooth}
d^2\ph(\ox|\, \nabla \ph(\ox))(w)=\la  w,\nabla^2\ph(\ox)w\ra\quad \mbox{for}\quad w\in \XX.
\end{equation}
The following simple sum rule for second subderivative \cite[Exercise~13.18]{RW98} is helpful. Let  $\ph, \phi:\XX\to\oR$ be proper l.s.c. convex functions with $\ox\in {\rm int}\,(\dom \ph)\cap\dom \phi$. Suppose that $\ph$ is twice differentiable at $\ox$ and $\ov\in \partial (\ph+\phi)(\ox)$. Then we have
\begin{equation}\label{eq:sum}
d^2(\ph+\phi)(\ox|\,\ov)(w)=\la w,\nabla^2\ph(\ox)w\ra+d^2\phi(\ox|\, \ov-\nabla \ph(\ox))(w)\quad \mbox{for}\quad w\in \XX.
\end{equation}
The next result is taken from \cite[Lemma~3, page 136]{P87} and \cite[Theorem~13.24]{RW98} to characterize sharp and strong optimal solutions.

\begin{Lemma}[Characterizations for sharp and strong solutions]\label{Fa} Let $\ph:\XX \to \oR$ be a proper l.s.c convex function with $\ox\in \dom \ph$. We have: 

\begin{enumerate}[{\rm (i)}]
\item $\ox$ is a sharp minimizer of $\ph$ if and only if there exists $c>0$ such that $d\ph(\ox)(w)\ge c\|w\|$ for all $w\in \XX$, i.e., $d\ph(\ox)(w)>0$ for all $w\in \XX \setminus\{0\}$.

\item $\ox$ is a strong  minimizer of $\ph$ iff $0\in \partial \ph(\ox)$ and there exists $\kk>0$ such that $d^2\ph(\ox|\, 0)(w)\ge \kk\|w\|^2$ for all $w\in \XX$, i.e., $d^2\ph(\ox|\, 0)(w)>0$ for any $w\neq 0$.
\end{enumerate}
\end{Lemma}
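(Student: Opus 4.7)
The plan is to prove (i) and (ii) separately, each by the usual forward/backward splitting, using positive homogeneity plus compactness of the unit sphere to upgrade pointwise positivity into a uniform growth estimate.

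For (i) the forward direction is immediate from the definition \eqref{eq:DD}: inserting $x=\ox+tw$ into the sharp-minimum inequality \eqref{eq:Sha}, dividing by $t>0$ and letting $t\dn 0$ gives $d\ph(\ox)(w)\ge c\|w\|$. For the backward direction, I would invoke the classical convexity fact that the difference quotient $t\mapsto[\ph(\ox+tw)-\ph(\ox)]/t$ is nondecreasing in $t>0$; hence $\ph(\ox+tw)-\ph(\ox)\ge t\,d\ph(\ox)(w)\ge ct\|w\|$ for every $t>0$, which in fact yields a global (not merely local) sharp-minimum estimate. The equivalence $d\ph(\ox)(w)\ge c\|w\|\Leftrightarrow d\ph(\ox)(w)>0\;\forall w\neq 0$ follows because $d\ph(\ox)$ is sublinear (hence continuous) by \eqref{eq:DDmax}, so its minimum over the compact unit sphere is attained and positive.

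For (ii), the forward direction begins with the observation that a strong minimizer is in particular a local minimizer, so $0\in\partial\ph(\ox)$. For any sequences $t_k\dn 0$ and $w_k\to w$, eventually $t_kw_k\in\B_\delta(0)$, so \eqref{eq:Str} gives $\ph(\ox+t_kw_k)-\ph(\ox)\ge(\kk/2)\|t_kw_k\|^2$; feeding this into \eqref{eq:SD} with $\ov=0$ (which kills the inner-product term) yields $d^2\ph(\ox|\,0)(w)\ge\kk\|w\|^2$.

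The main obstacle is the backward direction of (ii). I would argue by contradiction: if $\ox$ is not a strong minimizer with modulus $\kk$, then there exist $x_n\to\ox$ with $\ph(x_n)-\ph(\ox)<(\kk/2)\|x_n-\ox\|^2$. Setting $t_n:=\|x_n-\ox\|\dn 0$ and $w_n:=(x_n-\ox)/t_n$, compactness gives a subsequence with $w_n\to w$ and $\|w\|=1$. Because $0\in\partial\ph(\ox)$, this specific test sequence bounds the liminf in \eqref{eq:SD} from above, producing $d^2\ph(\ox|\,0)(w)\le\kk<\kk\|w\|^2$, a contradiction. The key technical subtlety is that the liminf in \eqref{eq:SD} is witnessed by \emph{any} admissible sequence, so the ``bad'' sequence built from the failure of \eqref{eq:Str} suffices. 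Finally, to convert the pointwise positivity $d^2\ph(\ox|\,0)(w)>0$ for $w\neq 0$ into the uniform quadratic bound, I would exploit positive $2$-homogeneity of the second subderivative (an easy rescaling $t\mapsto t/\lm$, $w'\mapsto \lm w'$ inside \eqref{eq:SD}) together with its lower semicontinuity, so that $\kk:=\min_{\|w\|=1}d^2\ph(\ox|\,0)(w)>0$ is attained on the compact unit sphere.
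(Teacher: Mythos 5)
The paper does not actually prove this lemma; it imports it verbatim from \cite{P87} and \cite{RW98}. Your overall strategy --- forward direction by plugging test points into the growth estimate, backward direction by contradiction with a normalized bad sequence, and compactness of the unit sphere plus (positive $1$- or $2$-) homogeneity to pass between pointwise positivity and a uniform modulus --- is exactly the standard route those references take, and part (i) together with the forward half of (ii) is fine as written.

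One step does not close as stated. In the backward direction of (ii) you negate strong minimality \emph{with the specific modulus} $\kk$ supplied by the hypothesis $d^2\ph(\ox|\,0)(w)\ge\kk\|w\|^2$, extract $x_n\to\ox$ with $\ph(x_n)-\ph(\ox)<\tfrac{\kk}{2}\|x_n-\ox\|^2$, and conclude $d^2\ph(\ox|\,0)(w)\le\kk<\kk\|w\|^2$ for the limiting unit direction $w$. Since $\|w\|=1$ the right-hand side equals $\kk$, so you have only $\kk\le\kk$ --- no contradiction. The repair is standard: negate strong minimality outright, so that for each $n$ there is $x_n\in\B_{1/n}(\ox)$, $x_n\neq\ox$, with $\ph(x_n)-\ph(\ox)<\tfrac{1}{2n}\|x_n-\ox\|^2$; the same normalization then yields $d^2\ph(\ox|\,0)(w)\le 0<\kk\|w\|^2$ (equivalently, show the hypothesis forces strong minimality with any modulus $\kk'<\kk$). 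A smaller caveat on (i): the reduction of ``$d\ph(\ox)(w)\ge c\|w\|$'' to ``$d\ph(\ox)(w)>0$ for $w\ne 0$'' via \eqref{eq:DDmax} and compactness of the sphere needs $d\ph(\ox)$ to be at least lower semicontinuous, which holds when $\partial\ph(\ox)$ is nonempty and compact (i.e.\ when $\ph$ is continuous at $\ox$) but can fail for an arbitrary proper l.s.c.\ convex $\ph$ with $\ox\in\dom\ph$ (take $\ph=\delta_C+x_2$ with $C=\{x_2\ge x_1^2\}$ at the origin: the directional derivative is positive in every nonzero direction, yet the minimum is not sharp). This is a defect of the lemma's stated hypotheses rather than of your argument, and it is harmless in the paper's application, where $\ph=R+\delta_{\Phi^{-1}(y_0)}$ has an l.s.c.\ directional derivative.
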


\section{Full characterizations for stable recovery of regularized linear inverse problems}
\setcounter{equation}{0}
In this section, we mainly study the following regularized linear inverse problem
\begin{equation}\label{p:P0}
    \min_{x\in \XX}\quad R(x)\quad \mbox{subject to}\quad \Phi x=y_0,
\end{equation}
where $\Phi\in \mathscr{L}(\XX,\YY)$ is a linear operator between two Euclidean spaces $\XX$ and $\YY$, $R:\XX\to \R$ is a continuous convex regularizer, and $y_0$ is a known vector in $\YY$. A feasible solution $x_0$ of problem~\eqref{p:P0} is a minimizer if and only if $0\in \partial R(x_0)+\Im \Phi^*$, i.e., there exists a {\em dual certificate}  $v\in \partial R(x_0)\cap \Im \Phi^*$. The set of all dual certificates is defined by
\begin{equation}\label{eq:DC}
\Delta(x_0):= \partial R(x_0)\cap \Im \Phi^*. 
\end{equation}
 As discussed in the Introduction section, when there is noise in the observation, it is conventional to solve the following regularized least-square optimization problem
\begin{equation}\label{p:Las}
P(y,\mu)\qquad    \min_{x\in \XX}\quad \frac{1}{2}\|\Phi x-y\|^2+\mu R(x),
\end{equation}
with parameter $y\in \YY$ satisfying $\|y-y_0\|\le \delta$ for some noise level $\delta>0$ and regularization parameter $\mu=c\delta$ for some constant $c>0$. The primary objective of our paper is the property of stable recovery \cite{CT05,CRT06,GSH11,H13,FPVDS13}, at which the optimal solution $x_0$ of \eqref{p:P0} can be recovered approximately by solving problem \eqref{p:Las} and the convergence rate has the same order with $\delta$.  

\begin{Definition}[Stable recovery] Let $x_0$ be an optimal solution of problem~\eqref{p:P0}. 
We say that  the {\em stable recovery} occurs at $x_0$ or $x_0$ is {\em stably recoverable} if  there exists a positive constant $K$ such that for sufficiently small $\delta>0$, any optimal solution $x(y,\mu)$ of problem \eqref{p:Las}   satisfies
\begin{equation}\label{eq:Est}
\|x(y,\mu)-x_0\|\le K\delta, 
\end{equation} 
whenever $\|y-y_0\|\le \delta$ and $\mu$ is chosen proportionally to $\delta$, i.e., $ \mu=c\delta>0$ for any positive constant $c$.
\end{Definition}
This definition is consistent with \eqref{eq:Com} in the Introduction section. Although the property of stable recovery in \eqref{eq:Com} also relates to the Morozov regularization problem \eqref{p:BP1}, we focus our study on the Tikhonov regularization problem \eqref{p:Las}, as the analysis for \eqref{p:BP1} is similar and solving the unconstrained problem \eqref{p:Las} seems to be easier. It is worth noting that stable recovery studied in other papers \cite{CT05,FPVDS13,H13} is usually described in a global setting, i.e., estimate \eqref{eq:Est} holds for any $\delta>0$. Our definition of stable recovery has the local sense, as $\delta$ is small. This local perspective aligns well with practical scenarios when the noise or perturbation level is typically minor. In inverse problems  literature, this property is also known as the {\em linear convergence} of $x(y,\mu)$ to $x_0$; see, e.g.,  \cite{GSH11,G11}. 

When the regularizer $R(x)$ is the $\ell_1$ norm in $\R^n$, stable recovery occurs at $x_0$ if and only if $x_0$ is a unique minimizer of \eqref{p:P0} \cite{CT05,GSH11}. For nonpolyhedral regularizers, solution uniqueness is not enough to guarantee stable recovery. Many sufficient conditions are proposed for stable recovery in more general settings such as the Null Space Property \cite{DX01,FR13,FPVDS13,VGFP15,VPF15}, the Nondegeneracy Source Condition and Restricted Injectivity \cite{GSH11,CR13,VPF15}, and the Minimum Gain Property \cite{CRPW12}. In the recent paper \cite{FNT23}, it is shown that all of these properties lead to the sharp minimum at $x_0$ for problem~\eqref{p:P0} in the sense that $x_0$ a sharp optimal solution of the function 
\begin{equation}\label{eq:ph}
\ph(x):=R(x)+ \iota_{\Phi^{-1}(y_0)}(x).
\end{equation}
However, the following example demonstrates that stable recovery may occur without sharp minima or any aforementioned properties.

\begin{Example}[Stable recovery without sharp minima]\label{ex:l12}
\rm Consider the following $\ell_1/\ell_2$  problem
\begin{equation}\label{p:E123}
\min_{x\in \R^3}\quad  R(x) = \sqrt{x_1^2+x_2^2}+ |x_3| \quad \mbox{subject to}\quad \Phi x=y_0   
\end{equation}
with $\Phi=\begin{pmatrix} 1&1&0\\1&1&1\end{pmatrix}
$,  $x_0=(1,1,0)^T$, and $y_0=(2,2)^T$. Note that $\Ker \Phi = \R(1,-1,0)^{T}$. Moreover, we have $\partial R(x_0) = \left(\frac{1}{\sqrt{2}},\frac{1}{\sqrt{2}}\right) \times [-1,1]
$ and $\Delta(x_0)=\left(\frac{1}{\sqrt{2}},\frac{1}{\sqrt{2}}\right) \times [-1,1]$, which implies that  $x_0$ is an optimal solution. Additionally, for all feasible solutions $x=x_0+t(1,-1,0)^T$, we have 
\[
\ph(x)-\ph(x_0)= \sqrt{(1+t)^2+(1-t)^2} - \sqrt{2
} = \dfrac{2t^2}{\sqrt{(1+t)^2+(1-t)^2}+\sqrt{2}}\in \left[\frac{\|x-x_0\|^2}{3},\frac{\|x-x_0\|^2}{2\sqrt{2}}\right]
\]
when $x$ is close to $x_0$. This tells us that $x_0$ is the unique  and strong optimal solution of function $\ph$ or problem \eqref{p:E123}. Obviously, $x_0$ is not a sharp minimizer by the definition in \eqref{eq:Sha}. 

Let us show next that stable recovery occurs at $x_0$. For any $y$ satisfying $\|y-y_0\| \leq \delta$, $x$ is an optimal solution of $P(y, \mu)$ with $\mu = c\delta$ if and only if  
$-\Phi^*(\Phi x - y)  \in \mu \partial R(x).$ 
This can be expressed by the following system
\begin{equation*}
\begin{cases}
(y_1 + y_2-(2x_1+2x_2+x_3), y_1 + y_2-(2x_1+2x_2+x_3)) \in \mu \partial\|\cdot\|(x_1,x_2) \\
y_2-(x_1+x_2+x_3) \in \mu \partial |x_3|.
\end{cases}
\end{equation*}
When $\delta$ is sufficiently small, $(y_1,y_2)$ is close to $(2,2)$. If $(x_1,x_2) = (0,0)$, we have  $|y_1 + y_2 - x_3| \le \frac{\mu}{\sqrt{2}}$ and $|y_2 -x_3| \le \mu$, which gives $|y_1| \le \mu+\frac{\mu}{\sqrt{2}}$ (contradiction). Thus, we have  $(x_1,x_2) \neq (0,0)$. The above system  turns into
\begin{equation}
\begin{cases}\label{eq:fer1}
 \left(y_1+y_2-(2x_1+2x_2+x_3),y_1+y_2-(2x_1+2x_2+x_3)\right) = \left(\dfrac{\mu x_1}{\sqrt{x^2_{1} + x^2_{2}}},\dfrac{\mu x_2}{\sqrt{x^2_{1} + x^2_{2}}}\right) \\
y_2-(x_1+x_2+x_3) \in \mu \partial |x_3|,
\end{cases}
\end{equation}
 which clearly gives us $x_1=x_2$. 

{\bf Case I:} $x_3 \neq 0$.  We have  $
 y_1+y_2-(4x_1+x_3) = \dfrac{\mu}{\sqrt{2}}$ and $
y_2-(2x_1+x_3) = \pm\mu.$
By solving for $x_1, x_3$, we have  $x_1 = x_2 = \frac{1}{2}(y_1\pm\mu-\frac{\mu}{\sqrt{2}})>0$ and $x_3=y_2-y_1\mp2\mu+\frac{\mu}{\sqrt{2}}$. As $|y_1-2|,|y_2-2|\le \delta$ and $\mu=c\delta$, some simple algebras give us  $
\|x-x_0\| \le C\delta
$ with constant $C>0$. 

{\bf Case II:} $x_3=0$. It follows from \eqref{eq:fer1} that  $
x_1=x_2 = \frac{1}{4}\left( y_1 + y_2- \frac{\mu}{\sqrt{2}}\right)>0$, which also implies $\|x-x_0\|\le C\delta$.
 
From the above two cases,  $x_0$ is stably recoverable by the definition. But $x_0$ is not a sharp solution, i.e., all the conditions mentioned before this example are not satisfied either. $\hfill{\triangle}$
\end{Example}

The next central theorem of our paper gives a full characterization of stable recovery.

\begin{Theorem}[Geometric characterization of stable recovery]\label{thm:Rob} 
Suppose that $R$ is a continuous convex  function and that  $x_0$ is an optimal solution of  \eqref{p:P0}. 
 Then $x_0$ is stably recoverable  if and only if 
\begin{equation}\label{con:SOSC}
\Ker \Phi\cap T_{\partial R^*({\rm Im}\, \Phi^*)}(x_0)=\{0\}.
\end{equation}

\end{Theorem}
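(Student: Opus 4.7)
The plan is to translate the optimality condition for $P(y,\mu)$ into a geometric statement about the set $S:=\partial R^*(\Im\Phi^*)$. By the Fenchel--Young relation \eqref{eq:FR}, $x$ solves $P(y,\mu)$ if and only if $v:=\Phi^*(y-\Phi x)/\mu$ lies in $\partial R(x)\cap\Im\Phi^*$, equivalently $x\in\partial R^*(v)$. Consequently $x_0$ and every optimal solution of $P(y,\mu)$ belong to $S$, so \eqref{con:SOSC} is exactly a tangential statement about how such optimal solutions can approach $x_0$ from within $S$.

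For the sufficiency direction ``$\Leftarrow$'' I would argue by contrapositive. First, I would observe that \eqref{con:SOSC} forces $x_0$ to be the unique minimizer of \eqref{p:P0}: by convexity of $R$, any other minimizer $x_0'$ would make each $(1-t)x_0+tx_0'$ also optimal and hence in $S$ for every $t\in[0,1]$, so $x_0'-x_0\in\Ker\Phi\cap T_S(x_0)\setminus\{0\}$, contradicting \eqref{con:SOSC}. With uniqueness, standard Tikhonov convergence theory gives $x_k:=x(y_k,\mu_k)\to x_0$ for any sequences $\|y_k-y_0\|\le\delta_k\downarrow 0$ with $\mu_k=c\delta_k$. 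Assume stable recovery fails, so $t_k:=\|x_k-x_0\|$ satisfies $t_k/\delta_k\to\infty$. Setting $w_k:=(x_k-x_0)/t_k$ and extracting $w_k\to w$ with $\|w\|=1$, the inclusion $x_k\in S$ gives $w\in T_S(x_0)$ directly from \eqref{eq:Tangent}. To show $\Phi w=0$, use the optimality condition $-\Phi^*(\Phi x_k-y_k)=\mu_k v_k$ with $v_k\in\partial R(x_k)$ bounded (by continuity of $R$) to get $\|\Phi^*(\Phi x_k-y_k)\|=O(\mu_k)$; decomposing $\YY=\Im\Phi\oplus\Ker\Phi^*$ and using that $\Phi^*$ is a bijection from $\Im\Phi$ onto $\Im\Phi^*$, together with $\|y_k-y_0\|\le\delta_k$, yields the sharp linear bound $\|\Phi(x_k-x_0)\|=O(\delta_k)$. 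Dividing by $t_k$ forces $\Phi w_k\to 0$ and hence $\Phi w=0$, contradicting \eqref{con:SOSC}.

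For the necessity ``$\Rightarrow$'' I would again argue by contrapositive, starting from $w\in\Ker\Phi\cap T_S(x_0)$ with $\|w\|=1$ and constructing an explicit sequence violating stable recovery. By definition of the tangent cone pick $t_k\downarrow 0$, $w_k\to w$ with $x_k:=x_0+t_kw_k\in S$, and choose $v_k\in\partial R(x_k)\cap\Im\Phi^*$; writing $v_k=\Phi^*u_k$ via the minimum-norm preimage $u_k\in\Im\Phi$, local boundedness of $\partial R$ and of the restricted inverse of $\Phi^*$ give $\|u_k\|\le M$ uniformly. Fix $c>0$ with $cM<1$, set $\mu_k:=c\delta_k$ and $y_k:=\Phi x_k+\mu_k u_k$; by construction $-\Phi^*(\Phi x_k-y_k)=\mu_k v_k\in\mu_k\partial R(x_k)$, so $x_k$ is an optimal solution of $P(y_k,\mu_k)$, and $\|y_k-y_0\|\le t_k\|\Phi w_k\|+\mu_k M$. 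Choosing $\delta_k:=\max\bigl(t_k\|\Phi w_k\|/(1-cM),\,t_k^2\bigr)$ absorbs the $\mu_k M$ term, handles the degenerate case $\Phi w_k=0$ via the $t_k^2$ branch, and still satisfies $\delta_k=o(t_k)$ since $\Phi w_k\to\Phi w=0$ and $t_k\to 0$; hence $\|y_k-y_0\|\le\delta_k$ while $\|x_k-x_0\|/\delta_k=t_k\|w_k\|/\delta_k\to\infty$, contradicting stable recovery. I expect the main obstacle to lie in the sufficiency direction, specifically in obtaining the sharp linear estimate $\|\Phi(x_k-x_0)\|=O(\delta_k)$ rather than the naive $O(\sqrt{\delta_k})$ that one gets from the objective-value inequality alone; isolating the $\Im\Phi$ component of the residual $y_k-\Phi x_k$ via the decomposition $\YY=\Im\Phi\oplus\Ker\Phi^*$ is the technical crux that makes the whole argument close.
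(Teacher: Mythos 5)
Your proof is correct and follows essentially the same route as the paper's: both directions argue by contradiction, the necessity direction constructing perturbed data $y_k=\Phi x_k+\mu_k u_k$ from a tangent direction exactly as the paper does (its $\bar u_k$ with $\bar u_k-u_k\in\Ker\Phi^*$ is your minimum-norm preimage), and the sufficiency direction extracting a unit limit direction $w\in\Ker\Phi\cap T_{\partial R^*(\Im\Phi^*)}(x_0)$ from a hypothetical failing sequence, with your $\Im\Phi\oplus\Ker\Phi^*$ decomposition playing the role of the paper's observation that $\Phi^*\Phi w_k\to 0$ forces $\Phi w=0$. Two minor points where you deviate, both defensible: your self-contained convexity argument for solution uniqueness (the segment of minimizers lies in $\partial R^*(\Im\Phi^*)$, so a second minimizer yields a nonzero tangent vector in $\Ker\Phi$) replaces the paper's citation of an external result, and your choice $\delta_k=\max\bigl(t_k\|\Phi w_k\|/(1-cM),\,t_k^2\bigr)$ explicitly covers the degenerate case $\Phi w_k=0$, where the paper's choice $\mu_k=t_k\|\Phi w_k\|$ would give a vanishing regularization parameter.
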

\begin{proof} First, let us verify \eqref{con:SOSC} when  stable recovery occurs at $x_0$. Pick any $w\in \Ker \Phi \cap T_{\partial R^*({\rm Im}\, \Phi^*)}(x_0)$. From the definition of the tangent cone \eqref{eq:Tangent}, we find sequences $t_k\dn 0$ and $w_k\to w$ such that $x_k:=x_0+t_kw_k\in \partial R^*({\rm Im}\, \Phi^*)$. Hence, there exists $u_k\in \YY$ such that $x_k\in \partial R^*(\Phi^*u_k)$, which means $\Phi^* u_k\in \partial R(x_k)$. As $R$ is a continuous convex function, it is locally Lipschitz around $x_0$ with some modulus $L>0$. It follows that  $\partial R(x_k)\subset L\B$, where $\B$ is the closed unit ball in $\XX$. The latter tells us that  $\|\Phi^* u_k\|\le L$ for sufficiently large $k$. It is well-known \cite[Proposition~2.173]{BS00} that there exists some constant $s>0$ such that 
\[
d(x;\Ker \Phi^*):=\inf\{\|x-u\||\, u\in \Ker \Phi^*\}\le s\|\Phi^* x\|\quad \mbox{for all}\quad x\in \XX.  
\]
Since $\|\Phi^* u_k\|\le L$, this inequality allows us to find  $\bar u_k\in \YY$ with $\bar u_k-u_k\in \Ker \Phi^*$ and $\|\bar u_k\|\le sL$.
Let us set 
\begin{equation}\label{eq:my}
\mu_k:=\|\Phi(x_k-x_0)\|=t_k\|\Phi w_k\|\to 0
\quad \mbox{and}\quad  y_k:=\Phi x_k+\mu_k \bar u_k\to y_0.
\end{equation}
It follows that
\[
\|y_k-y_0\|=\|\Phi (x_k-x_0)+\mu_k\bar u_k\|\le \|\Phi (x_k-x_0)\|+\mu_k\|\bar u_k\|=\mu_k(1+\|\bar u_k\|)\le \mu_k(1+sL).
\]
Define $\delta_k:=(1+sL)\mu_k$. As $\bar u_k-u_k\in \Ker \Phi^*$, we have 
\[
-\Phi^*(\Phi x_k-y_k)=\mu_k\Phi^*\bar u_k=\mu_k\Phi^* u_k\in \mu_k\partial R(x_k),
\]
which tells us that $x_k$ is an optimal solution of problem  $P(y_k,\mu_k)$ in \eqref{p:Las}. As $\mu_k=(1+sL)^{-1}\delta_k$ and $\delta_k\to 0$, it follows from \eqref{eq:Est} and \eqref{eq:my} that 
\[
t_k\|w_k\|=\|x_k-x_0\|\le K\delta_k=K(1+sL)\mu_k=K(1+sL)t_k\|\Phi w_k\|
\]
for some constant $K>0$. This clearly  implies that $\|w_k\|\le K(1+sL)\|\Phi w_k\|$. As $w_k\to w\in \Ker \Phi$, we get from the latter that $\|w\|\le 0$, i.e., $w=0$. Condition \eqref{con:SOSC} is verified.  

Conversely, we claim that  the stable recovery occurs at $x_0$ when  \eqref{con:SOSC} is satisfied. For any $v\in \Delta(x_0)=\Im \Phi^*\cap \partial R(x_0)$, note that  $\partial R^*(v)\subset \partial R^*(\Im \Phi^*)$. It follows that 
\[
\{0\}=\Ker \Phi\cap T_{\partial R^*({\rm Im}\, \Phi^*)}(x_0)\supset \Ker \Phi\cap{\rm cone}\, (\partial R^*(v)-x_0).
\]
\cite[Theorem~4.5]{FNP24} tells us that  $x_0$ is the unique solution of problem \eqref{p:P0}.

Let us suppose by contradiction that  stable recovery does not occur at $x_0$. Hence, there exist  sequences $\mu_k=c\delta_k\dn 0$ for some constant $c>0$ and   $y_k\to y_0$  such that  $\|y_k-y_0\|\le \delta_k$ and $\|x_k-x_0\|\ge k\delta_k$, where $x_k$ is some optimal solution of $P(y_k,\mu_k)$. Define $t_k:=\|x_k-x_0\|>0$ and $w_k:=t_k^{-1}(x_k-x_0)$. Since $x_0$ is the unique solution of problem~\eqref{p:P0}, it follows from \cite[Proposition~3.1 and Remark~3.2]{FNT23} that $x_k\to x_0$. By passing to subsequences, suppose without loss of generality that $t_k\dn 0$ and $w_k\to w\in \XX$ with  $\|w\|=1$. Since $x_k$ is an optimal solution of problem~\eqref{p:Las},   we have
\begin{equation}\label{eq:In2}
v_k:=-\frac{1}{\mu_k}\Phi^*(\Phi x_k-y_k)=-\frac{1}{c\delta_k}\Phi^*(t_k\Phi w_k-y_k+y_0)\in  \partial R(x_k)=\partial R(x_0+t_kw_k).    
\end{equation}
Hence $\|v_k\|\le L$ for sufficiently large $k$, as $R$ is locally Lipschitz around $x_0$ with  modulus $L>0$. Moreover, the above inclusion tells us that 
\[
x_0+t_kw_k\in \partial R^*(v_k)\subset \partial R^*(\Im \Phi^*), 
\]
which implies that $w\in T_{\partial R^*({\rm Im}\, \Phi^*)}(x_0)$. Note also from \eqref{eq:In2}  that 
\[
\Phi^*(\Phi w_k)=-\frac{c\delta_k}{t_k}v_k+\frac{\Phi^*(y_k-y_0)}{t_k}.
\]
As $t_k\ge k\delta_k$, we have 
\[
\|\Phi^*(\Phi w_k)\|\le \frac{c\delta_k}{t_k}\|v_k\|+\|\Phi^*\|\frac{\|y_k-y_0\|}{t_k}\le \frac{c\delta_k}{t_k}\|v_k\|+\|\Phi^*\|\frac{\delta_k}{t_k}\le \frac{1}{k}(cL+\|\Phi^*\|)\to 0,
\]
which yields $\Phi^*(\Phi w)=0$, i.e., $\Phi w=0$. Hence, we have 
\[
w\in \Ker \Phi\cap T_{\partial R^*({\rm Im}\, \Phi^*)}(x_0). 
\]
This contradicts  \eqref{con:SOSC}, as $\|w\|=1$.  The proof is complete.
\end{proof}

From the proof of the above theorem, solution uniqueness is necessary for stable recovery; see also \cite[Proposition~3.1]{FNT23} for a similar observation. The tangent cone in \eqref{con:SOSC} gives us a geometric condition for stable recovery. Conceptually, it is  a second-order condition, as the tangent cone acting on the image of the subdifferential mapping is akin to taking another ``derivative''.  This is totally different from other sufficient conditions for stable recovery \cite{CRPW12, GSH11, FPVDS13, FNT23,VPF15} that only use first-order information of the regularizer. In the rest of the section, we will establish several consequences of this theorem, compare them with other known results on stable recovery, and also build up some examples to distinguish  our result from others.

From the above point of view, it is not surprising that non-sharp optimal solutions can be stably recoverable, as in  Example~\ref{ex:l12}. It is also natural to think that stable recovery is more associated with strong minima. But the next example shows that strong minima are insufficient to ensure stable recovery. 
\begin{Example}[Failure of stable recovery under strong minima]\label{ex:NoSR}\rm
Consider the following $\ell_1/\ell_2$ optimization problem with two groups:
\begin{equation}\label{p:E12}
\min_{x\in \mathbb{R}^4} R(x) = \sqrt{x_1^2+x_2^2}+ \sqrt{x_3^2+x_4^2}, \quad \text{subject to} \quad \Phi x = y_0   
\end{equation}
with $\Phi=\begin{pmatrix} 1&0&0&-1\\0&1&0&1\\0&0&1&0\end{pmatrix} 
$,  $x_0=(0,1,0,0)^T$, and $y_0=(0,1,0)^T$.  Note that $\Ker \Phi=\R(1,-1,0,1)^T$. Any feasible point $x$ takes the form $ x_0 + t(1,-1,0,1)^T$ for $t \in \mathbb{R}$. With the function $\ph$ defined in \eqref{eq:ph}, we have 
\begin{equation*}
    \ph(x) - \ph(x_0) = \sqrt{t^2 + (1-t)^2} + |t| - 1 = \frac{t^2 + 2(|t|-t)}{\sqrt{t^2 + (1-t)^2} + 1 - |t|}\in
  \left[\frac{\|x-x_0\|^2}{9},\frac{\|x-x_0\|^2}{3}\right]
\end{equation*}
when $x$ is close to $x_0$. Thus $x_0$ is a strong (unique) optimal solution of the function $\ph$ or problem \eqref{p:E12}, but it is not a sharp one (by choosing $t>0$). 

Next, we claim that $\R_+(1,-1,0,1)\subset\Ker \Phi\cap T_{\partial R^*({\rm Im}\, \Phi^*)}(x_0)$. For any $w:= (a,-a,0,a)$ with $a>0$ and a sequence $t_k\dn 0$, we build $w_k:=(a,-a,c_k,d_k)$ with
\[
c_k:=\dfrac{a\sqrt{2t_ka-2t_k^2a^2}}{\|(t_ka,1-t_ka)\|} \quad\mbox{and} \quad d_k:=\dfrac{a(1-2t_ka)}{\|(t_ka,1-t_ka)\|}.
\]
Observe that $w_k \to w$ and $\|(c_k,d_k)\|=a\neq0$. Moreover,  we have
\[
v_k:=\nabla R(x_0+t_kw_k)= \left(\dfrac{t_ka}{\|(t_ka,1-t_ka)\|},\dfrac{1-t_ka}{\|(t_ka,1-t_ka)\|},\dfrac{\sqrt{2t_ka-2t_k^2a^2}}{\|(t_ka,1-t_ka)\|},  \dfrac{1-2t_ka}{\|(t_ka,1-t_ka)\|}\right).
\]
It follows that $x_0+t_kw_k\in \partial R^*(v_k)$. Note also that  $v_k\in \Im \Phi^*$. Hence, $x_0+t_kw_k\in \partial R^*(\Im \Phi^*)$, which helps us to conclude that $w\in T_{\partial R^*({\rm Im}\, \Phi^*)}(x_0)$. By Theorem~\ref{thm:Rob}, $x_0$ is not stably recoverable, although it is a strong and unique optimal solution. $\hfill{\triangle}$
\end{Example}
Next, let us  analyze condition~\eqref{con:SOSC} for different classes of regularizers.  
Recall here the {\em critical cone} of $R$ at $x_0$ defined by
\begin{equation}\label{def:Cg}
C_R(x_0):=\{w\in \XX|\; dR(x_0|\,w)\le 0\}.
\end{equation}
 This cone is important for us to estimate for the set appeared in \eqref{con:SOSC}. As $R$ is a continuous convex function, it is locally Lipschitz at $x_0$. Hence, $\partial R(x_0)$ is a compact set. It follows from the formula \eqref{eq:DDmax} that $dR(x_0|\,\cdot)$ is a continuous function. Hence, the interior and the boundary of this critical cone are determined respectively by
\begin{equation}\label{eq:Inbd}
    {\rm int}\, C_R(x_0):=\{w\in \XX|\; dR(x_0|\,w)< 0\}\quad \mbox{and}\quad \bd C_R(x_0):=\{w\in \XX|\; dR(x_0|\,w)=0\}. 
\end{equation}
When $x_0$ is a minimizer of problem~\eqref{p:P0}, for any $w\in \Ker \Phi$, we have $\Phi(x_0+tw)=y_0$ and thus
\[
 dR(x_0|\,w)=\lim_{t\dn 0}\frac{R(x_0+tw)-R(x_0)}{t}\ge 0. 
\]
It follows that  
\begin{equation}\label{eq:Inbd2}
    \Ker \Phi\cap {\rm int}\, C_R(x_0)=\emptyset\quad \mbox{and}\quad  0\in \Ker \Phi\cap\bd C_R(x_0)\neq \emptyset. 
\end{equation}

\begin{Proposition}\label{prop:Est} Suppose that $R$ is a continuous convex function and $x_0$ is an optimal solution of problem~\eqref{p:P0}. Then  $\partial R^*(\Im \Phi^*)$ is a closed set of $\XX$. Moreover, we have 
\begin{equation}\label{eq:Inc}
 \bigcup_{v\in \Delta(x_0)} T_{\partial R^*(v)}(x_0)\subset T_{\partial R^*({\rm Im}\, \Phi^*)}(x_0)\subset \bigcup_{v\in \Delta(x_0)}N_{\partial R(x_0)}(v).
\end{equation}
Consequently, 
\begin{equation}\label{eq:Inc2}
\bigcup_{v\in \Delta(x_0)} \left(\Ker\Phi \cap  T_{\partial R^*(v)}(x_0) \right)\subset \Ker \Phi\cap T_{\partial R^*({\rm Im}\, \Phi^*)}(x_0)\subset \Ker \Phi \cap \bd C_R(x_0). 
\end{equation}
\end{Proposition}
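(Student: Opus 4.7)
The plan is to handle the three assertions in order: closedness of $\partial R^*(\Im \Phi^*)$, the two inclusions of \eqref{eq:Inc}, and finally the consequence \eqref{eq:Inc2}. The main technical tool, used repeatedly, is the Hoffman-type bound $d(u;\Ker\Phi^*)\le s\|\Phi^* u\|$ for all $u\in\YY$ (as in the proof of Theorem~\ref{thm:Rob}), combined with local Lipschitz continuity of $R$ at $x_0$, which bounds $\partial R$ on a neighborhood of $x_0$.

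For closedness, I would take $x_k\in\partial R^*(\Im\Phi^*)$ with $x_k\to x$, so that there exist $u_k\in\YY$ with $\Phi^* u_k\in \partial R(x_k)$. Local Lipschitzness of $R$ yields $\|\Phi^* u_k\|\le L$ eventually; the Hoffman bound then supplies $\bar u_k$ with $\Phi^*\bar u_k=\Phi^* u_k$ and $\|\bar u_k\|\le sL$, and a subsequential limit $\bar u_k\to\bar u$ gives $\Phi^* u_k\to\Phi^*\bar u$. Closedness of $\gph\partial R$ then yields $\Phi^*\bar u\in\partial R(x)$, so $x\in\partial R^*(\Phi^*\bar u)\subset\partial R^*(\Im\Phi^*)$.

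For \eqref{eq:Inc}, the first inclusion is immediate: if $w\in T_{\partial R^*(v)}(x_0)$ with $v\in\Delta(x_0)$, the defining sequence $x_0+t_k w_k\in\partial R^*(v)\subset\partial R^*(\Im\Phi^*)$ shows $w\in T_{\partial R^*(\Im\Phi^*)}(x_0)$. For the second inclusion, take $w\in T_{\partial R^*(\Im\Phi^*)}(x_0)$ with corresponding $t_k\downarrow 0$, $w_k\to w$, and $u_k\in\YY$ such that $\Phi^* u_k\in\partial R(x_0+t_k w_k)$. Applying the same Hoffman/Lipschitz combination I can replace $u_k$ by $\bar u_k$ with $\Phi^*\bar u_k=\Phi^* u_k$ and $\|\bar u_k\|$ bounded; extracting a convergent subsequence $\Phi^*\bar u_k\to v$ and invoking closedness of $\gph \partial R$ gives $v\in\partial R(x_0)\cap\Im\Phi^*=\Delta(x_0)$. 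Now the monotonicity of $\partial R$ yields, for every $v'\in\partial R(x_0)$,
\begin{equation*}
\la \Phi^* u_k - v', (x_0+t_k w_k)-x_0\ra = t_k \la \Phi^* u_k - v', w_k\ra \ge 0,
\end{equation*}
so dividing by $t_k$ and passing to the limit gives $\la v-v',w\ra \ge 0$, i.e.\ $w\in N_{\partial R(x_0)}(v)$.

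Finally, \eqref{eq:Inc2} follows by intersecting \eqref{eq:Inc} with $\Ker\Phi$: the left inclusion is immediate, and for the right inclusion I use \eqref{eq:Inc} to pick $v\in\Delta(x_0)$ with $w\in N_{\partial R(x_0)}(v)$. Writing $v=\Phi^*\bar u$, the identity $\la v,w\ra=\la\bar u,\Phi w\ra=0$ holds because $w\in\Ker\Phi$, and the normal cone inequality gives $\la v',w\ra\le 0$ for every $v'\in\partial R(x_0)$. By \eqref{eq:DDmax}, $dR(x_0)(w)\le 0$, so $w\in C_R(x_0)$; combining with the already established $\Ker\Phi\cap\inte C_R(x_0)=\emptyset$ from \eqref{eq:Inbd2} places $w$ in $\bd C_R(x_0)$. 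I expect the main obstacle to be the second inclusion of \eqref{eq:Inc}: the subtlety is that the approximating dual vectors $u_k$ need not themselves be bounded, and the Hoffman bound together with a compactness argument on $\Im\Phi^*$ is essential to produce the limiting dual certificate $v\in\Delta(x_0)$ that witnesses $w\in N_{\partial R(x_0)}(v)$.
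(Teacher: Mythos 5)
Your proof is correct and follows essentially the same route as the paper's: Lipschitz continuity of $R$ near $x_0$ bounds the subgradients, closedness of $\gph\partial R$ produces the limiting certificate $v\in\Delta(x_0)$, monotonicity of $\partial R$ gives the normal-cone inclusion, and the orthogonality $\la v,w\ra=0$ for $v\in\Im\Phi^*$, $w\in\Ker\Phi$ yields \eqref{eq:Inc2}. The only difference is cosmetic: your detour through the Hoffman bound to produce bounded preimages $\bar u_k$ is unnecessary, since the vectors $v_k=\Phi^*u_k$ already lie in the closed subspace $\Im\Phi^*$ and are bounded by the Lipschitz modulus, so a convergent subsequence of the $v_k$ themselves suffices (this is what the paper does).
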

\begin{proof} To prove the closedness of $\partial R^*(\Im \Phi^*)$, pick any sequence $x_k\in \partial R^*(\Im \Phi^*)$ converging to some $\bar x$. We find $v_k\in \Im \Phi^*$ such that $x_k\in \partial R^*(v_k)$, which means $v_k\in \partial R(x_k)$. Since $R$ is a continuous convex function, it is Lipschitz continuous with some modulus $L>0$ around $\bar x$. It follows that  $\|v_k\|\le L$ for sufficiently large $k$. By passing to a subsequence, we may suppose that $v_k\to \bar v\in \Im \Phi^*$. Since the graph of the subdifferential mapping $\partial R$ is closed, we have  $\bar v\in \partial R(\ox)$, i.e., $\ox\in \partial R^*(\bar v)\subset \partial R^*(\Im \Phi^*)$. This verifies the closedness of $\partial R^*(\Im \Phi^*)$. 

The left inclusion  in \eqref{eq:Inc} is trivial due to the definition of $\Delta(x_0)$ in \eqref{eq:DC}. To show the right inclusion in \eqref{eq:Inc}, pick any $w\in T_{\partial R^*({\rm Im}\, \Phi^*)}(x_0)$. There exist sequences $t_k\dn 0$, $w_k\to w$, and $v_k\in \Im \Phi^*$ such that $x_0+t_kw_k\in \partial R^*(v_k)$, which means $v_k\in \partial R(x_0+t_kw_k)$. It is similar to the proof of the first part, we may suppose that $v_k\to v\in \Im \Phi^*\cap \partial R(x_0)=\Delta(x_0)$.   As $\partial R$ is a monotone mapping, for any $z\in \partial R(x_0)$, the latter gives us that 
\[
0\le\la v_k-z,x_0+t_kw_k-x_0\ra=t_k\la v_k-z,w_k\ra.
\]
It follows that $\la v_k-z,w_k\ra\ge 0$. By taking $k\to \infty$, we have $
\la z-v,w\ra\le 0$ for any $z\in \partial R(x_0),$
which clearly implies that $w\in N_{\partial R(x_0)}(v)$ due to \eqref{eq:Nor} and thus verifies \eqref{eq:Inc}.

The left inclusion in \eqref{eq:Inc2} is straightforward from \eqref{eq:Inc}. Let us prove the right inclusion in \eqref{eq:Inc2}. Pick any $w\in \Ker \Phi\cap T_{\partial R^*({\rm Im}\, \Phi^*)}(x_0)$. It follows from \eqref{eq:Inc} that there exists some $v\in \Delta(x_0)$ such that $w\in N_{\partial R(x_0)}(v)$. For any $z\in \partial R(x_0)$, we have
\[
\la z,w\ra\le \la v,w\ra=0,
\]
as $v\in \Im \Phi^*$ and $w\in \Ker\Phi$. This together with \eqref{eq:DDmax} and \eqref{eq:Inbd} implies that $w\in \bd C_R(x_0)$. The right inclusion in \eqref{eq:Inc2} is verified. 
\end{proof}

The following result recovers \cite[Theorem~3.10]{FNT23} in our framework, at which it is shown that sharp minima are sufficient for stable recovery. Other results about stable recovery such as \cite[Theorem~4.7]{GSH11}, \cite[Proposition~2.2]{CRPW12}, \cite[Theorem~2]{FPVDS13}, and  \cite[Theorem~2]{VPF15} are  established under some  conditions that are equivalent to sharp minima too. Consequently, our characterization in Theorem~\ref{thm:Rob} provides a weaker condition for stable recovery for the case of general convex regularizers.

\begin{Corollary}[Sharp minima for stable recovery]\label{cor:Sharp} Suppose that $R$ is a continuous convex function. If  $x_0$ is a sharp minimizer of problem \eqref{p:P0} then condition \eqref{con:SOSC} holds, i.e., stable recovery occurs at $x_0$. 
\end{Corollary}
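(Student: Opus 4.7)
The plan is to combine Theorem~\ref{thm:Rob} with the outer inclusion of Proposition~\ref{prop:Est} and then invoke the characterization of sharp minima from Lemma~\ref{Fa}(i). By Theorem~\ref{thm:Rob}, stable recovery at $x_0$ is equivalent to
\[
\Ker \Phi \cap T_{\partial R^*(\Im \Phi^*)}(x_0) = \{0\},
\]
while the right inclusion in \eqref{eq:Inc2} from Proposition~\ref{prop:Est} tells us
\[
\Ker \Phi \cap T_{\partial R^*(\Im \Phi^*)}(x_0) \subset \Ker \Phi \cap \bd C_R(x_0).
\]
It therefore suffices to show that sharp minimality of $x_0$ forces $\Ker \Phi \cap \bd C_R(x_0) = \{0\}$.

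To handle this, I would express sharp minimality through the directional derivative of the composite function $\varphi(x)=R(x)+\delta_{\Phi^{-1}(y_0)}(x)$ from \eqref{eq:ph}. Since $\Phi^{-1}(y_0)$ is the affine space $x_0+\Ker\Phi$, a direct computation gives $d\varphi(x_0)(w)=dR(x_0)(w)$ if $w\in\Ker\Phi$ and $d\varphi(x_0)(w)=\infty$ otherwise. Applying Lemma~\ref{Fa}(i), the hypothesis that $x_0$ is a sharp minimizer of $\varphi$ is equivalent to the existence of $c>0$ with
\[
dR(x_0)(w)\ge c\|w\|\quad\text{for all}\quad w\in \Ker\Phi.
\]

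Now pick any $w\in \Ker\Phi\cap \bd C_R(x_0)$. By the boundary description in \eqref{eq:Inbd}, $dR(x_0)(w)=0$, and the sharp minimum inequality above yields $c\|w\|\le 0$, forcing $w=0$. Hence $\Ker\Phi\cap\bd C_R(x_0)=\{0\}$, which through the inclusion from Proposition~\ref{prop:Est} gives condition \eqref{con:SOSC}, and then Theorem~\ref{thm:Rob} delivers stable recovery.

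The proof is essentially a bookkeeping chain that routes the sharp-minimum hypothesis through the critical-cone bound. The only nontrivial point to articulate carefully is the identification of $d\varphi(x_0)$ with $dR(x_0)$ restricted to $\Ker\Phi$, which relies on the fact that the tangent cone to the affine feasible set at an interior-feasible point coincides with $\Ker\Phi$; this is immediate from \eqref{eq:Tangent}. Beyond that, no serious obstacle arises, since the heavy second-order machinery has already been packaged into Theorem~\ref{thm:Rob} and Proposition~\ref{prop:Est}.
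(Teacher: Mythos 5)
Your proof is correct and follows essentially the same route as the paper: invoke Theorem~\ref{thm:Rob} together with the right inclusion in \eqref{eq:Inc2} of Proposition~\ref{prop:Est}, and reduce everything to showing $\Ker\Phi\cap\bd C_R(x_0)=\{0\}$ under sharp minimality. The only difference is that the paper obtains this last fact by citing \cite[Proposition~3.8]{FNT23}, whereas you derive it directly from Lemma~\ref{Fa}(i) via the (correct) identification $d\ph(x_0)(w)=dR(x_0)(w)$ for $w\in\Ker\Phi$ and $d\ph(x_0)(w)=\infty$ otherwise; this is a harmless, self-contained substitute for the citation.
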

\begin{proof}
    Suppose $x_0$ is a sharp solution. \cite[Proposition~3.8]{FNT23} tells us that $\Ker \Phi \cap C_R(x_0) = \{0\}$. This together with \eqref{eq:Inc2} verifies condition~\eqref{con:SOSC}. By Theorem~\ref{thm:Rob}, $x_0$ is stably recoverable.
\end{proof}

As condition~\eqref{con:SOSC} is a second-order type, while sharp minima can be characterized by only  using first-order information \cite{P87,HKS23,FNT23}, we  expect the difference between stable recovery and sharp minima in general; see also  our Example~\ref{ex:l12}. The following result shows that strong minima imply stable recovery, when the regularizer $R(x)$ is twice differentiable.  Although smooth regularizers are not the main case in our paper, this corollary sheds  light on the impact of strong minima in stable recovery. 

\begin{Corollary}[Strong minima and stable recovery with smooth regularizers]\label{cor:stro} Suppose that $x_0$ is a minimizer of problem \eqref{p:P0} and  $R$ is a convex function that is continuously twice  differentiable around  $x_0$. Then we have
\begin{equation}\label{eq:Inc3}
\Ker \Phi\cap T_{\partial R^*({\rm Im}\, \Phi^*)}(x_0)\subset \Ker \Phi\cap \Ker \nabla^2 R(x_0). 
\end{equation}
Consequently, stable recovery occurs at $x_0$ provided that 
\begin{equation}\label{con:SCD}
\Ker \Phi\cap \Ker \nabla^2 R(x_0)=\{0\},
\end{equation}
which is equivalent to the strong minimum at  $x_0$ of problem~\eqref{p:P0}. 
\end{Corollary}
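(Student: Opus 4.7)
The plan is to apply Theorem~\ref{thm:Rob} after establishing \eqref{eq:Inc3} by a direct computation that exploits smoothness of $R$, and then to identify the condition \eqref{con:SCD} with the second-order sufficient optimality criterion from Lemma~\ref{Fa}(ii).

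For the inclusion \eqref{eq:Inc3}, I would fix $w\in\Ker\Phi\cap T_{\partial R^*(\Im\Phi^*)}(x_0)$ and, by the definition of the tangent cone, pick sequences $t_k\dn 0$, $w_k\to w$, and $v_k\in\Im\Phi^*$ with $x_0+t_kw_k\in\partial R^*(v_k)$. Since $R$ is $C^2$ around $x_0$, its subdifferential there is single-valued, so $v_k=\nabla R(x_0+t_kw_k)$, and a Taylor expansion gives
\[
v_k-\nabla R(x_0)=t_k\nabla^2 R(x_0)w_k+o(t_k).
\]
The optimality of $x_0$ in \eqref{p:P0} yields $-\nabla R(x_0)\in\Im\Phi^*$; since $\Im\Phi^*$ is a linear subspace, both $v_k$ and $\nabla R(x_0)$ lie in it, hence so does the left-hand side above. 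Dividing by $t_k$ and letting $k\to\infty$ gives $\nabla^2 R(x_0)w\in\Im\Phi^*$, i.e., $\nabla^2 R(x_0)w=\Phi^*u$ for some $u\in\YY$. Pairing with $w\in\Ker\Phi$,
\[
\la\nabla^2 R(x_0)w,w\ra=\la\Phi^*u,w\ra=\la u,\Phi w\ra=0,
\]
and positive semidefiniteness of $\nabla^2 R(x_0)$ (from convexity of $R$) forces $\nabla^2 R(x_0)w=0$. This proves \eqref{eq:Inc3}; combining with Theorem~\ref{thm:Rob}, the condition \eqref{con:SCD} implies stable recovery at $x_0$.

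For the equivalence between \eqref{con:SCD} and strong minimum at $x_0$ of problem~\eqref{p:P0}, I would apply Lemma~\ref{Fa}(ii) to $\ph(x):=R(x)+\delta_{\Phi^{-1}(y_0)}(x)$. Using the sum rule \eqref{eq:sum} with $\ov=0\in\partial\ph(x_0)$,
\[
d^2\ph(x_0|\,0)(w)=\la w,\nabla^2 R(x_0)w\ra+d^2\delta_{\Phi^{-1}(y_0)}\bigl(x_0|\,-\nabla R(x_0)\bigr)(w).
\]
Since $\Phi^{-1}(y_0)$ is an affine subspace with normal cone $\Im\Phi^*$ at $x_0$ containing $-\nabla R(x_0)$, a direct computation from definition \eqref{eq:SD} shows that the second subderivative of $\delta_{\Phi^{-1}(y_0)}$ at $x_0$ for $-\nabla R(x_0)$ equals $0$ on $\Ker\Phi$ and $+\infty$ elsewhere; the inner product $\la-\nabla R(x_0),w'\ra$ vanishes whenever $w'\in\Ker\Phi$ because $-\nabla R(x_0)\in(\Ker\Phi)^\perp$. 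Hence $d^2\ph(x_0|\,0)(w)>0$ for every $w\ne 0$ iff $\la w,\nabla^2 R(x_0)w\ra>0$ for every $w\in\Ker\Phi\setminus\{0\}$, which, by positive semidefiniteness of $\nabla^2 R(x_0)$, is exactly $\Ker\Phi\cap\Ker\nabla^2 R(x_0)=\{0\}$.

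The main obstacle I anticipate is the clean handling of the second-subderivative of the indicator of the affine constraint, including the fact that $-\nabla R(x_0)$ lies in the relative interior direction $(\Ker\Phi)^\perp$, so that no spurious curvature term appears; once that identification is pinned down the remaining steps are straightforward applications of the results already in the excerpt.
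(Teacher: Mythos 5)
Your proposal is correct and follows essentially the same route as the paper: the same tangent-cone sequence argument with the Taylor expansion of $\nabla R$ to obtain $\la \nabla^2 R(x_0)w,w\ra=0$ and hence \eqref{eq:Inc3} via positive semidefiniteness, and the same reduction of strong minima to \eqref{con:SCD} through Lemma~\ref{Fa}(ii), the sum rule \eqref{eq:sum}, and the identity $d^2\delta_{\Phi^{-1}(y_0)}(x_0|-\nabla R(x_0))=\delta_{\Ker\Phi}$. The only cosmetic difference is that you first conclude $\nabla^2 R(x_0)w\in\Im\Phi^*$ before pairing with $w$, whereas the paper pairs $v_k$ with $w$ directly; the two computations are equivalent.
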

\begin{proof} To justify \eqref{eq:Inc3},  pick any $w\in \Ker \Phi\cap T_{\partial R^*({\rm Im}\, \Phi^*)}(x_0)$. There exist sequences $t_k\dn 0$, $w_k\to w$, and $v_k\in \Im \Phi^*$ such that $x_0+t_kw_k\in \partial R^*(v_k)$. Since $R$ is continuously twice differentiable around  $x_0$, we have
\[
v_k=\nabla R(x_0+t_kw_k)=\nabla R(x_0)+t_k\nabla^2 R(x_0)w_k+o(t_k). 
\]
Since $x_0$ is an optimal solution of problem \eqref{p:P0}, $0\in \nabla R(x_0)+\Im \Phi^*$, i.e., $\nabla R(x_0)\in \Im \Phi^*$. As $v_k\in \Im \Phi^*$ and $w\in \Ker \Phi$, we obtain from the above equality that 
\[
0=\la v_k,w\ra=\la t_k\nabla^2 R(x_0)w_k+o(t_k),w\ra,
\]
which implies $
0=\la \nabla^2 R(x_0)w_k,w\ra+o(t_k)/t_k$.
By letting $k\to \infty$, this certainly gives us that $\la \nabla^2 R(x_0)w,w\ra=0$. Since $R$ is a convex function, $\nabla^2 R(x_0)$ is positive semi-definite. It follows that  $w\in \Ker \nabla^2 R(x_0)\cap \Ker \Phi$, which verifies \eqref{eq:Inc3}.

If condition \eqref{con:SCD} holds, \eqref{con:SOSC} is satisfied due to \eqref{eq:Inc3}. By Theorem~\ref{thm:Rob}, $x_0$ is stably recoverable. It remains to show that $x_0$ is a strong optimal solution  of \eqref{p:P0} if and only if condition \eqref{con:SCD} holds. Indeed, $x_0$ is a strong minimizer of \eqref{p:P0} if and only if it is the strong minimizer of the function $\ph$ defined in \eqref{eq:ph}. By Lemma~\ref{Fa} and the sum rule \eqref{eq:sum}, the latter is equivalent to
\begin{equation*}
    \lange w, \nabla^2 R(x_0)w \rangle + d^2 \delta_{\Phi^{-1}(y_0)}(x_0|\,-\nabla R(x_0))(w)=d^2 \ph(x_0|\,0)(w) > 0 \quad \text{for all} \quad w \ne 0.
\end{equation*}
By using \eqref{eq:SD}, it is easy to verify that   $d^2\delta_{\Phi^{-1}(y_0)}(x_0|-\nabla R(x_0))(w)=\delta_{{\rm Ker}\, \Phi}(w).$ The above condition is equivalent to \eqref{con:SCD}, since $\nabla^2 R(x_0)$ is positive semi-definite. 
\end{proof}

In the classical Morozov/Tikhonov regularization method, the regularizer $R(x)$ is chosen as $\frac{1}{2}\la Qx,x\ra$ for some positive semi-definite matrix $Q\in \R^{n\times n}$. By Corollary~\ref{cor:stro}, stable recovery occurs at an optimal solution $x_0$ of problem~\eqref{p:P0} provided that
\[
\Ker\Phi\cap \Ker Q=\{0\},
\]
which is always the case when $Q$ is a positive definite matrix. An important case of this class is $R(x)=\|\nabla x\|^2$, where $\nabla$ is the {\em discrete gradient operator} \cite{ROF92,CP11} frequently used in imaging for denoising/deblurring signals/ images; see also \eqref{eq:TV} and \eqref{eq:total}. Note that $\nabla^2 R(x) = 2\nabla^{*}\nabla$, which is positive semi-definite. This implies that  $\Ker \nabla^2R(x) = \Ker \nabla^*\nabla = \Ker \nabla$. Hence, condition \eqref{con:SCD} turns into $\Ker\Phi\cap \Ker \nabla^*\nabla =\{0\}$, which is equivalent to 
\[
\Ker\Phi\cap \Ker \nabla =\{0\}.
\]
It is easy to check that  $\dim(\Ker \nabla)=1$. The above condition is very likely valid when $\Phi$ is some blurring or random  matrix. 

Next we show  that stable recovery is actually equivalent to solution uniqueness for the broad class of convex {\em piecewise linear-quadratic} regularizers. This is a far-reaching extension of \cite[Theorem~4.7]{GSH11} for the $\ell_1$ regularized problems and a huge improvement of \cite[Corollary~3.15]{FNT23} that only established the convergence rate $\delta^\frac{1}{2}$ of $\|x_\mu-x_0\|$ for this class. Recall that the function $R:\XX\to \R$ is said to be  convex piecewise linear-quadratic if its domain can be written as the union of finitely many polyhedral sets and for each of those sets, $R$ is represented as $\frac{1}{2}\la Ax,x\ra+\la b,x\ra+c$ with some $A\in \mathscr{L}(\XX,\XX)$ being a positive semi-definite operator, $b\in \XX$, and $c\in \R$. This class contains several important regularizers in optimization and linear inverse problems such as the elastic net \cite{ZH05}, Huber norm \cite{H73}, and the discrete Blake-Zisserman regularizer \cite{BZ87}. It is worth noting that in this case, the function $
\ph$ in \eqref{eq:ph} is also convex  piecewise linear-quadratic. A unique optimal solution is actually the strong optimal solution because convex  piecewise linear-quadratic functions satisfy the so-called {\em quadratic growth condition}; see, e.g.,  \cite[the proof of Corollary~4.2]{BLN21}

\begin{Corollary}[Stable recovery and strong minima]\label{cor:Smooth} Let $R$ be a continuous convex piecewise linear-quadratic function and $x_0$ be a solution of problem~\eqref{p:P0}. Stable recovery occurs at $x_0$ if and only if $x_0$ is a unique (strong) optimal solution of problem~\eqref{p:P0}.
\end{Corollary}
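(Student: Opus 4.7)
My plan is to reduce stable recovery to the tangent-cone condition \eqref{con:SOSC} of Theorem~\ref{thm:Rob} and exploit the polyhedral structure of $\gph\partial R$ available for convex piecewise linear-quadratic regularizers.

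The forward implication (stable recovery $\Rightarrow$ uniqueness) is immediate from the discussion after Theorem~\ref{thm:Rob}: its proof already shows that any stably recoverable $x_0$ must be the unique minimizer of \eqref{p:P0}. The parenthetical ``(strong)'' sharpening follows because convex piecewise linear-quadratic functions satisfy the quadratic growth condition at unique minimizers, so the function $\ph$ in \eqref{eq:ph} inherits this property (the fact recalled in the paragraph preceding the corollary).

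For the non-trivial direction, assume $x_0$ is the unique solution of \eqref{p:P0}. Because $R$ is convex piecewise linear-quadratic, the graph $\gph\partial R$ is a finite union of polyhedra in $\XX\times\XX$ by \cite[Proposition~12.30]{RW98}. Intersecting with $\XX\times\Im\Phi^*$ and projecting onto the first coordinate (a polyhedra-preserving operation by Fourier--Motzkin elimination) gives
\[
\partial R^*(\Im\Phi^*)\;=\;\pi_{\XX}\bigl(\gph\partial R\cap(\XX\times\Im\Phi^*)\bigr)\;=\;\bigcup_{j=1}^{N} S_j,
\]
a finite union of polyhedra in $\XX$. For such a set one has $T_{\partial R^*(\Im\Phi^*)}(x_0)=\bigcup_{j:\,x_0\in S_j} T_{S_j}(x_0)$, and any $w$ in the tangent cone of a polyhedron $S_j$ containing $x_0$ actually satisfies $x_0+tw\in S_j$ for all $t$ in some interval $[0,\varepsilon]$.

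Now pick any $w\in\Ker\Phi\cap T_{\partial R^*(\Im\Phi^*)}(x_0)$. The previous paragraph furnishes $t>0$ and $v_t\in\Im\Phi^*$ with $v_t\in\partial R(x_0+tw)$. Since $\Phi w=0$, the point $x_0+tw$ is feasible for \eqref{p:P0}, and the inclusion $v_t\in\partial R(x_0+tw)\cap\Im\Phi^*$ is precisely the KKT optimality condition (as $\Im\Phi^*$ is a subspace). Hence $x_0+tw$ is an optimal solution; uniqueness forces $w=0$, which verifies \eqref{con:SOSC}, and Theorem~\ref{thm:Rob} delivers stable recovery. The only delicate step is the polyhedrality of $\partial R^*(\Im\Phi^*)$; everything after it is a short ``escape along a ray'' contradiction with uniqueness. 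Example~\ref{ex:NoSR} confirms that outside the piecewise linear-quadratic class even strong minima may fail to guarantee stable recovery, so this argument is truly tied to the PLQ hypothesis.
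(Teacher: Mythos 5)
Your proposal is correct and follows essentially the same route as the paper: both reduce to condition \eqref{con:SOSC}, write $\partial R^*(\Im\Phi^*)$ as a finite union of polyhedra via the piecewise polyhedrality of the subdifferential graph (you project $\gph\partial R$, the paper projects $\gph\partial R^*$ — the same set up to a coordinate swap), and then use the closedness of $\cone(P_k-x_0)$ to produce a genuine feasible optimal point $x_0+tw$ whose existence contradicts uniqueness. No gaps; the argument matches the paper's proof step for step.
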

\begin{proof}
As noted after Theorem~\ref{thm:Rob} that solution uniqueness at $x_0$ is necessary for stable recovery at $x_0$. It remains to show that if $x_0$ is the unique optimal solution of problem~\eqref{p:P0},  it is stably recoverable. Note from \cite[Theorem~11.14 and Proposition~12.30]{RW98} that  the conjugate function $R^*$ is also piecewise linear-quadratic and  the graph of $\partial R^*$ is {\em piecewise polyhedral} in the sense that its graph, $\gph\partial R^*:=\{(v,x)\in \XX\times \XX|\, x\in \partial R^*(v)\}$, is a union of finitely many polyhedrons. 
Let us consider the projection   $P_{\XX}:\XX\times \XX$ with $P_\XX(v,x)=x$ for any $(v,x)\in \XX\times\XX$. Note that $\gph \partial R^* \cap (\Im \Phi^* \times \XX)$ is also a union of finitely many polyhedrons. So is its projection $P_{\XX}(\gph \partial R^* \cap (\Im \Phi^* \times \XX))=\partial R^* (\Im\Phi^*)$. Hence, we may write 
\begin{equation}\label{eq:Poly}
\partial R^*(\Im \Phi^*) = \bigcup_{k=1}^m P_k,
\end{equation}
where $P_k$, $k=1, \ldots, m$ are some (closed) polyhedra. Set $\mathcal{I}:=\{k\in \{1,\ldots,m\}|\; x_0\in P_k\}$,  the left-hand side of \eqref{con:SOSC} turns into 
\begin{equation*}
\begin{aligned}
        \Ker \Phi\cap T_{\partial R^*({\rm Im}\, \Phi^*)}(x_0)
        =\Ker \Phi \cap \left[\underset{k\in \mathcal{I}}{\bigcup}T_{P_k}(x_0)\right]=\underset{k\in \mathcal{I}}{\bigcup}\left(\Ker \Phi \cap T_{P_k}(x_0)\right).
    \end{aligned}
    \end{equation*}
Since each $P_k-x_0$ is a polyhedron containing $0$, its conic hull $\cone(P_k-x_0)$ is closed for $k\in \mathcal{I}$. It follows from \eqref{eq:TanC} that
\[
\Ker \Phi\cap T_{\partial R^*({\rm Im}\, \Phi^*)}(x_0)=\underset{k\in \mathcal{I}}{\bigcup}\left(\Ker \Phi \cap \cone(P_k-x_0)\right).
\]
Pick any $w\in \Ker \Phi\cap T_{\partial R^*({\rm Im}\, \Phi^*)}(x_0)$. The above expression deduces the existence of $k\in\mathcal{I}$ such that $w\in \cone (P_{k}-x_0)$. We may find some $t>0$ satisfying  $x_0+tw \in P_{k} \subset \partial R^*(\Im \Phi^*)$ by \eqref{eq:Poly}. Therefore, there exists $v \in \Im \Phi^* $ such that $x_0+tw \in \partial R^*(v)$, which means $v \in \partial R(x_0+tw)$ and thus $v\in \Delta(x_0+tw)$. On the other hand, $\Phi (x_0+tw) = y_0$ since $w\in \Ker \Phi$. It follows that $x_0 + tw$ is another optimal solution of problem \eqref{p:P0}.  Since $x_0$ is the unique solution of \eqref{p:P0}, we have $w=0$. This verifies condition~\eqref{con:SOSC} and shows that $x_0$ is stably recoverable by Theorem~\ref{thm:Rob}.
\end{proof}

A subclass of piecewise linear-quadratic functions is the class of  {\em piecewise-linear} functions that include a lot of regularizers used for inverse linear problems such as the $\ell_1$ norm, the 1D total variation and 2D anisotropic total variation seminorms, and the fussed Lasso. Consequently, the conclusion of Corollary~\ref{cor:Smooth} is also valid, but unique optimal solutions of problem~\eqref{p:P0} are actually  sharp minimizers for this class. It recovers \cite[Theorem~3.10]{FNT23} for this case and \cite[Theorem~4.7]{GSH11} for the $\ell_1$ norm.

The main challenge of Theorem~\ref{thm:Rob} is the computation of the tangent cone $T_{\partial R^*({\rm Im}\, \Phi^*)}(x_0)$. The set $\partial R^*({\rm Im}\, \Phi^*)$ is highly intricate and possibly non-convex.  Even in the simple case that  $R$ is continuously twice differentiable, it is not clear to us how to calculate it fully; see \eqref{eq:Inc3} in Corollary~\ref{cor:stro}. In the next section, we focus on understanding the condition \eqref{con:SOSC} for analysis $\ell_1/\ell_2$ regularization problems, which play crucial roles in many statistical models with group sparsity \cite{YL06} and imaging \cite{ROF92,CP11}.

\section{Stable recovery for analysis group sparsity  regularized linear inverse problems}    
\setcounter{equation}{0}

An important convex regularizer widely used in optimization is the $\ell_1/\ell_2$ norm
(also known as the group Lasso regularizer) defined by 
\begin{equation}\label{eq:l12}
    \|u\|_{1,2}:=\sum_{J\in \mathcal{J}}\|u_J\|\quad \mbox{for any}\quad u\in \R^{p},
\end{equation}
where $\mathcal{J}$ is a collection of index sets partitioning $\{1, \ldots,p\}$ into $q$ different groups, for any $J\in \mathcal{J}$, $u_J\in \R^{|J|}$ is the component vector of $u$ with index $J$, and $\|u_J\|$ is the standard Euclidean norm in $\R^{|J|}$ with $|J|$ being the cardinality of $J$. Let $D$ be a known $n\times p$ matrix and $R:\R^n\to \R$ be the {\em analysis}  $\ell_1/\ell_2$ seminorm defined by 
\begin{equation}\label{norm:l12}
    R(x):=\|D^*x\|_{1,2}=\sum_{J\in \mathcal{J}}\|D^*_Jx\|\quad \mbox{for}\quad x\in \R^n,
\end{equation}
where $D_J$ is the $n\times |J|$ containing  all the $J$-index {\em columns} from $D$. 
Problem \eqref{p:P0} turns to
\begin{equation}\label{p:P12}
\min_{x\in \R^n}\quad \|D^*x\|_{1,2}\quad \mbox{subject to}\quad \Phi x=y_0,
\end{equation}
where $\Phi:\R^n\to \R^m$ is a linear operator and $y_0\in \R^m$. Two important cases of $D$ that will be addressed in our paper are (a) $D^*=\Id:\R^n\to \R^n$, the identity matrix that has been used significantly in statistics \cite{YL06} to promote the group sparsity of solutions of problem~\eqref{p:P12}; (b) $D^*=\nabla:\R^{n_1\times n_2}\to \R^{2n_1n_2}$, 
the two-dimensional (2D) {\em discrete gradient operator} defined by
\begin{equation}\label{eq:TV}
    (\nabla x)_{i,j}:=\begin{pmatrix}
        (\nabla x)^1_{i,j}\\(\nabla x)^2_{i,j}
    \end{pmatrix}\quad \mbox{for}\quad x\in \R^{n_1\times n_2}\quad \mbox{with}
\end{equation}
\begin{equation}\label{eq:total}
   (\nabla x)^1_{i,j}:=\left\{\begin{array}{ll}x_{i+1,j}-x_{i,j}\quad &\mbox{for}\quad i < n_1\\ 
   0 \quad &\mbox{for}\quad i = n_1,
\end{array}\right.\; \mbox{and}\;\;\;\\
(\nabla x)^2_{i,j}:=\left\{\begin{array}{ll}x_{i,j+1}-x_{i,j}\quad &\mbox{for}\quad j < n_2\\ 
0 \quad &\mbox{for}\quad j= n_2.
\end{array}\right.
\end{equation}
This regularizer converts problem \eqref{p:P12} to the so-called {\em isotropic total variation problem} that is widely used in imaging \cite{ROF92,CP11}. 

Let $x_0$ be an optimal solution of problem~\eqref{p:P12} and  define $\oy:=D^*x_0$. Define the {\em active index}  and {\em inactive index} set of $D^*x_0$ respectively by
\begin{equation}\label{def:ind}
\mathcal{I}:=\{J\in \mathcal{J}|\; D^*_Jx_0=\oy_J\neq 0\}\quad\mbox{and}\quad \mathcal{I}^c:=\mathcal{J}\setminus\mathcal{I}=\{J\in \mathcal{J}|\;  D^*_Jx_0=\oy_J=0\}.   
\end{equation}
 Define further that the vector $e\in \R^p$ by 
\begin{equation}\label{def:e}
    e:=\left\{\begin{array}{ll} e_J=\frac{\oy_J}{\|\oy_J\|}\; &\mbox{if}\; J\in \mathcal{I}\\
    e_J=0\; &\mbox{if}\; J\in \mathcal{I}^c. \end{array}\right.
\end{equation}
 Note that the function $\|u\|$, $u\in \R^{|J|}$ is twice differentiable around any $u\neq 0$ with the Hessian 
\begin{equation}\label{eq:Hes1}
\nabla^2 \|u\|=\nabla \left(\frac{u}{\|u\|}\right)=\frac{1}{\|u\|}\Id_J -\frac{1}{\|u\|^3}uu^T, 
\end{equation}
where $\Id_J$ is the identity mapping in $\R^{|J|}$. Throughout the section, we define the following matrix 
\begin{equation}\label{def:AJ}
A_J:=\nabla^2 \|\oy_J\|=\frac{1}{\|\oy_J\|}\Id_J -\frac{1}{\|\oy_J\|^3}\oy_J(\oy_J)^T \quad \mbox{for any}\quad J\in \mathcal{J}.
\end{equation}
Moreover, the linear operator $A_\mathcal{I}:\R^\mathcal{I}\to \R^\mathcal{I}$ is defined by 
\begin{equation}\label{def:AI}
A_\mathcal{I}w:=(A_Jw_J)_{J\in \mathcal{I}}\quad \mbox{for all}\quad w\in \R^{\mathcal{I}}.
\end{equation}
Here the notion $\R^\mathcal{I}$ is just like $\R^{|\mathcal{I}|}$, but writing $\R^\mathcal{I}$ allows us to match the index $\mathcal{I}$ when picking any $w=(w_J)_{J\in \mathcal{I}}\in \R^\mathcal{I}$.
For an index set $\mathcal{K}\subset \mathcal{J}$, we denote the {\em sphere} $\mathbb{S}_\mathcal{K}$ and the {\em unit ball} $\B_\mathcal{K}$ by
\begin{equation}\label{def:SB}
   \mathbb{S}_\mathcal{K}:=\{w\in \R^\mathcal{K}|\; \|w_J\|=1, J\in \mathcal{K}\}\quad \mbox{and}\quad  \mathbb{B}_\mathcal{K}:=\{w\in \R^\mathcal{K}|\; \|w_J\|\le1, J\in \mathcal{K}\}.
\end{equation}
Observe that 
\begin{equation}\label{eq:Sub}
    \partial \|\oy\|_{1,2}=e_{\mathcal{I}}\times \B_{\mathcal{I}^c}.
\end{equation}
By \eqref{eq:DDmax} and \eqref{eq:Inbd}, the boundary of the critical cone  $C_R(x_0)$ in \eqref{def:Cg} is deduced to 
\begin{equation}\label{eq:CCR}
\bd C_R(x_0) = \{ w \in \R^n|\; \la 
e_\mathcal{I},D_\mathcal{I}^*w \ra + \|D^*_{\mathcal{I}^c}w\|_{1,2} = 0\}.
\end{equation}
Note that the set of all dual certificates  \eqref{eq:DC} in this case is 
\[
\Delta(x_0)= D (\partial \|\oy\|_{1,2}) \cap \Im \Phi^*.
\]
The following set is important in the analysis of our  paper
\begin{equation}\label{def:Wx}
    W(x_0):=\Ker \Phi\cap \bd C_R(x_0).
\end{equation}
As discussed in Proposition~\ref{eq:Est} and Corollary~\ref{cor:Sharp}, $W(x_0)=0$ if and only if $x_0$ is a sharp optimal solution; see, e.g., \cite[Proposition~3.8]{FNT23}. It is more interesting to study stable recovery when $W(x_0)\neq \{0\}$. The following result gives a representation of this set.

\begin{Proposition}\label{prop:Crit} Suppose that $x_0$ is a minimizer of problem \eqref{p:P12}. For any $v\in \partial \|\oy\|_{1,2}$ satisfying $Dv \in \Delta(x_0)$,  define the following two index sets
\begin{equation}\label{def:KH}
    \mathcal{K}:=\{J\in  \mathcal{I}^c|\; \|v_J\|=1\}\quad \mbox{and} \quad\mathcal{H}:= \mathcal{I}^c\setminus\mathcal{K}=\{J\in  \mathcal{I}^c|\; \|v_J\|<1\}. 
\end{equation}
Then we have 
\begin{equation}\label{eq:KCrit}
     W(x_0)=\{w\in \Ker\Phi|\; D^*_J w\in \R_+v_J, J\in \mathcal{K}\mbox{ and } D^*_Jw=0, J\in \mathcal{H}\}. 
\end{equation}
\end{Proposition}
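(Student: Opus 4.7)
The plan is to exploit the dual-certificate condition $Dv\in\Im\Phi^*$ to obtain a single scalar identity on $\Ker\Phi$, and then compare it, term by term, with the boundary formula \eqref{eq:CCR} for $C_R(x_0)$ via the equality cases of Cauchy--Schwarz. Specifically, for any $w\in\Ker\Phi$ one has $\langle Dv,w\rangle=\langle v,D^*w\rangle=0$, and since $v\in\partial\|\oy\|_{1,2}=e_\mathcal{I}\times\B_{\mathcal{I}^c}$ by \eqref{eq:Sub}, this gives
\begin{equation*}
\langle e_\mathcal{I},D_\mathcal{I}^*w\rangle+\sum_{J\in\mathcal{I}^c}\langle v_J,D_J^*w\rangle=0.
\end{equation*}
Because $\|v_J\|\le 1$ for $J\in\mathcal{I}^c$, Cauchy--Schwarz yields $\langle v_J,D_J^*w\rangle\le\|D_J^*w\|$, hence the identity above forces $\langle e_\mathcal{I},D_\mathcal{I}^*w\rangle+\|D_{\mathcal{I}^c}^*w\|_{1,2}\ge 0$ on $\Ker\Phi$, with equality (by \eqref{eq:CCR}) characterizing membership in $W(x_0)$.

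For the forward inclusion, I would take $w\in W(x_0)$ and, using the identity just derived, conclude $\sum_{J\in\mathcal{I}^c}(\|D_J^*w\|-\langle v_J,D_J^*w\rangle)=0$. Since each summand is nonnegative, every term vanishes, so for each $J\in\mathcal{I}^c$ one has both $\langle v_J,D_J^*w\rangle=\|v_J\|\cdot\|D_J^*w\|$ and $\|v_J\|\cdot\|D_J^*w\|=\|D_J^*w\|$. Splitting along the partition $\mathcal{I}^c=\mathcal{K}\cup\mathcal{H}$ from \eqref{def:KH}: on $\mathcal{H}$ (where $\|v_J\|<1$) the second identity forces $D_J^*w=0$; on $\mathcal{K}$ (where $\|v_J\|=1$) the equality case of Cauchy--Schwarz in the first identity forces $D_J^*w=\alpha_Jv_J$ with $\alpha_J=\|D_J^*w\|\ge 0$, i.e.\ $D_J^*w\in\R_+v_J$.

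For the reverse inclusion, I would insert these structural conditions back into the decomposition of $\langle v,D^*w\rangle=0$: the terms over $\mathcal{H}$ vanish, while on $\mathcal{K}$ one computes $\langle v_J,D_J^*w\rangle=\alpha_J\|v_J\|^2=\alpha_J=\|D_J^*w\|$, so the identity collapses exactly to $\langle e_\mathcal{I},D_\mathcal{I}^*w\rangle+\|D_{\mathcal{I}^c}^*w\|_{1,2}=0$, placing $w$ in $\bd C_R(x_0)$ by \eqref{eq:CCR}, hence in $W(x_0)$. The argument is essentially bookkeeping; the only mildly delicate point is recognizing that the single duality identity, when tested against $\Ker\Phi$, packages both a nonnegativity statement (giving one side of \eqref{eq:CCR} for free) and a sharp equality criterion whose two Cauchy--Schwarz ingredients line up precisely with the two subcases $\mathcal{K}$ and $\mathcal{H}$.
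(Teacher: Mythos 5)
Your proof is correct and follows essentially the same route as the paper's: both use $Dv\in\Im\Phi^*=(\Ker\Phi)^\perp$ to rewrite $\la e_\mathcal{I},D_\mathcal{I}^*w\ra+\|D_{\mathcal{I}^c}^*w\|_{1,2}$ as the nonnegative sum $\sum_{J\in\mathcal{I}^c}(\|D_J^*w\|-\la v_J,D_J^*w\ra)$ and then read off the equality cases of Cauchy--Schwarz separately on $\mathcal{K}$ and $\mathcal{H}$. (Incidentally, your sign $\ge 0$ for that sum is the right one; the paper's displayed ``$\le 0$'' is a typo.)
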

\begin{proof}
As $v\in \partial \|\oy\|_{1,2}$, it follows from \eqref{eq:Sub} that $v_\mathcal{I}=e_\mathcal{I}$ and $v_{\mathcal{I}^c}\in \B_{\mathcal{I}^c}$.
For any $w \in \Ker \Phi$, since $Dv\in \Im \Phi^*$, we have
    \begin{eqnarray*}
\begin{aligned}
            \la 
D_\mathcal{I}e_\mathcal{I},w \ra + \|D^*_{\mathcal{I}^c}w\|_{1,2}&=\disp \sum_{J\in \mathcal{I}}\la D_Je_J,w\ra+\sum_{J\in \mathcal{I}^c}\|D^*_{J}w\|\\
&= \disp \la Dv,w \ra - \sum_{J\in \mathcal{I}^c} \la D_Jv_J,w \ra +\sum_{J\in \mathcal{I}^c}\|D^*_{J}w\|\\
            &= \disp - \sum_{J\in \mathcal{I}^c} \la v_J,D^*_{J}w \ra + \sum_{J\in \mathcal{I}^c}\|D^*_{J}w\| \\
            &= \disp \sum_{J \in \mathcal{K}} \left(\|D^*_{J}w\| - \la v_J, D^*_{J}w\ra\right) + \sum_{J \in \mathcal{H}} \left(\|D^*_{J}w\| - \la v_J, D^*_{J}w\ra\right)\\
            &\ge 0 \qquad \text{(as}\;\; v_{\mathcal{I}^c}\in \B_{\mathcal{I}^c}\text{)}.
\end{aligned}
    \end{eqnarray*}    
Due to \eqref{eq:CCR} and \eqref{def:Wx}, $w \in W(x_0)$ if and only if $w\in \Ker\Phi $ and  there exist $\lambda_J \geq 0$,  $J \in \mathcal{K}$ such that $D^*_{J}w = \lambda_J v_J$ and $D^*_{J}w =0$ for any $J \in \mathcal{H}$. This clearly verifies \eqref{eq:KCrit}.
\end{proof}

Stable recovery or linear convergence rate at $x_0$ for the $\ell_1/\ell_2$ problem \eqref{p:P12} was studied in \cite{G11,H13} in infinite dimensional settings. By reducing these works to finite dimensional frameworks,  we observe that \cite[Proposition~6.1 and Proposition~7.1]{G11} and \cite[Theorem I.3]{H13} used the following the {\em Restricted Injectivity} condition  
\begin{equation}\label{con:G11}
    \{w\in \Ker\Phi|\; D^*_Jw=0, J\in \mathcal{H}\}=\{0\}
\end{equation}
to ensure stable recovery at $x_0$. By \eqref{eq:KCrit}, this condition implies that $W(x_0)=0$, i.e., $x_0$ is a sharp minimizer. This means  sharp minima are still behind \cite{G11,H13} for stable recovery. In this section, we will derive some new conditions that are totally independent of sharp minima. Before going there, let us define the set
\begin{equation}\label{def:ME}
    \mathcal{E}:=\{w\in \R^n|\; D^{*}_{J}w \in \R\{\oy_J\}, J\in \mathcal{I}\},
\end{equation}
which is introduced in \cite[Theorem~5.1]{FNT23} to characterize the solution uniqueness of problem~\eqref{p:P12}. The following result recalls that equivalence between solution uniqueness and strong minima of problem~\eqref{p:P12} and its characterization via the set \eqref{def:ME} above from \cite[Theorem~5.1 and Theorem~5.3]{FNT23}. 
\begin{Theorem}[Equivalence between solution uniqueness and strong minima]\label{thm:Equi} Let $x_0$ be a minimizer of problem~\eqref{p:P12}. The following are equivalent: 
\begin{itemize}
\item[{\bf (i)}] $x_0$ is the unique solution of problem~\eqref{p:P12}.

    \item[{\bf (ii)}] $x_0$ is the strong solution of problem~\eqref{p:P12}. 

    \item[{\bf (iii)}] $W(x_0)\cap \mathcal{E}=\{0\}$ with $\mathcal{E}$ being defined in \eqref{def:ME}. 
 \end{itemize}   
\end{Theorem}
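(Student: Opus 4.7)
I would establish the cyclic chain (ii) $\Rightarrow$ (i) $\Rightarrow$ (iii) $\Rightarrow$ (ii). The implication (ii) $\Rightarrow$ (i) is essentially built into the definitions: the quadratic growth \eqref{eq:Str} forces strict inequality $\ph(x)>\ph(x_0)$ on a punctured neighborhood of $x_0$, and convexity of $\ph$ from \eqref{eq:ph} then propagates this local uniqueness to a global one. The heart of the argument is (iii) $\Rightarrow$ (ii), with (i) $\Rightarrow$ (iii) handled by a direct construction of a competing minimizer.

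For (i) $\Rightarrow$ (iii), I argue by contrapositive. Take any nonzero $w\in W(x_0)\cap\mathcal{E}$. Since $w\in\mathcal{E}$, for each $J\in\mathcal{I}$ there exists $\alpha_J\in\R$ with $D_J^* w=\alpha_J\oy_J$, so for $t>0$ small enough that $1+t\alpha_J>0$ for every $J\in\mathcal{I}$,
\begin{equation*}
\|\oy_J+tD_J^* w\| \;=\; (1+t\alpha_J)\|\oy_J\| \;=\; \|\oy_J\|+t\la e_J, D_J^* w\ra.
\end{equation*}
Summing over $J\in\mathcal{J}$ and invoking the boundary condition $\la e_\mathcal{I},D_\mathcal{I}^* w\ra+\|D_{\mathcal{I}^c}^* w\|_{1,2}=0$ that $w\in\bd C_R(x_0)$ supplies via \eqref{eq:CCR} yields $R(x_0+tw)=R(x_0)$. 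Since $w\in\Ker\Phi$, the point $x_0+tw$ is a distinct optimal solution of \eqref{p:P12}, contradicting (i).

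For (iii) $\Rightarrow$ (ii), I would invoke Lemma \ref{Fa}(ii) and compute the second subderivative $d^2\ph(x_0|\,0)$. As $\ph=R+\delta_{\Phi^{-1}(y_0)}$, any sequence with $x_0+t_k w_k\in\dom\ph$ forces $w_k\in\Ker\Phi$, so $d^2\ph(x_0|\,0)(w)=+\infty$ when $w\notin\Ker\Phi$. For $w\in\Ker\Phi$ and $w_k\to w$ in $\Ker\Phi$, a second-order Taylor expansion of the Euclidean norm around the nonzero vectors $\oy_J$ for $J\in\mathcal{I}$, using the Hessian formula \eqref{def:AJ}, produces
\begin{equation*}
R(x_0+t_k w_k)-R(x_0)\;=\;t_k f(w_k)+\tfrac{t_k^2}{2}\la D_\mathcal{I}^* w_k, A_\mathcal{I} D_\mathcal{I}^* w_k\ra+o(t_k^2),
\end{equation*}
where $f(u):=\la e_\mathcal{I},D_\mathcal{I}^* u\ra+\|D_{\mathcal{I}^c}^* u\|_{1,2}$ is continuous and nonnegative on $\Ker\Phi$. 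When $w\in\Ker\Phi\setminus W(x_0)$, $f(w)>0$ and the first-order term drives the ratio to $+\infty$. When $w\in W(x_0)$, $f(w)=0$; taking $w_k\equiv w$ shows $d^2\ph(x_0|\,0)(w)\le\la D_\mathcal{I}^* w, A_\mathcal{I} D_\mathcal{I}^* w\ra$, while the reverse inequality follows from nonnegativity of $f$ and continuity of the quadratic form in $w_k$. An algebraic expansion of \eqref{def:AJ} gives $\la D_J^* w, A_J D_J^* w\ra=(\|D_J^* w\|^2-\la e_J,D_J^* w\ra^2)/\|\oy_J\|$, which by Cauchy--Schwarz vanishes precisely when $D_J^* w\in\R\{\oy_J\}$. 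Thus under (iii), $\la D_\mathcal{I}^* w, A_\mathcal{I} D_\mathcal{I}^* w\ra$ is strictly positive on $W(x_0)\setminus\{0\}$, and Lemma \ref{Fa}(ii) delivers (ii).

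The main obstacle I anticipate is the rigorous justification of the second-subderivative formula under arbitrary perturbations $w_k\to w$: one must verify that the $o(t_k^2)$ remainder is uniform along bounded sequences $\{w_k\}$ (which follows from smoothness of $\|\cdot\|$ away from $0$), and that admitting $w_k\notin\Ker\Phi$ cannot lower the liminf (which is precluded by the infinite penalty of $\delta_{\Phi^{-1}(y_0)}$). Once these technicalities are handled, splitting $W(x_0)$ according to whether the quadratic form $\la D_\mathcal{I}^* w, A_\mathcal{I} D_\mathcal{I}^* w\ra$ vanishes produces the geometric criterion $W(x_0)\cap\mathcal{E}=\{0\}$ in (iii) in a transparent way.
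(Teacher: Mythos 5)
Your proposal is correct, and it is genuinely more self-contained than what the paper does: the paper's entire proof is a citation to \cite[Theorem~5.1 and Theorem~5.3]{FNT23} (after noting that $\Ker\Phi\cap{\rm int}\,C_R(x_0)=\emptyset$ because $x_0$ is a minimizer), whereas you reprove the equivalence from scratch. Your cycle (ii)$\Rightarrow$(i)$\Rightarrow$(iii)$\Rightarrow$(ii) is sound: the construction of the competing minimizer $x_0+tw$ for $0\neq w\in W(x_0)\cap\mathcal{E}$ uses exactly the identity $\|\oy_J+tD_J^*w\|=\|\oy_J\|+t\la e_J,D_J^*w\ra$ valid when $D_J^*w\in\R\oy_J$ and $t$ is small, together with the boundary condition \eqref{eq:CCR}, and it correctly yields $R(x_0+tw)=R(x_0)$ with $x_0+tw$ feasible; and your computation of $d^2\ph(x_0|\,0)$ --- infinite off $\Ker\Phi$, infinite on $\Ker\Phi\setminus W(x_0)$ because the first-order term $f(w_k)/t_k$ blows up (here you should note explicitly that $f\ge 0$ on $\Ker\Phi$ is precisely \eqref{eq:Inbd2}, i.e.\ optimality of $x_0$), and equal to $\la D_\mathcal{I}^*w,A_\mathcal{I}D_\mathcal{I}^*w\ra$ on $W(x_0)$ --- combined with the Cauchy--Schwarz identification $\Ker A_J=\R\{\oy_J\}$ and Lemma~\ref{Fa}(ii), delivers exactly the criterion $W(x_0)\cap\mathcal{E}=\{0\}$. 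What your route buys is transparency: it exhibits $\la D_\mathcal{I}^*w,A_\mathcal{I}D_\mathcal{I}^*w\ra$ as the second subderivative restricted to $W(x_0)$, which is the same quadratic form that reappears in \eqref{eq:Zero} and in Theorem~\ref{thm:Full}, so the reader sees why $\mathcal{E}$ shows up both in the uniqueness criterion and in the stable-recovery analysis. What it costs is length and the technical care you already flag (uniformity of the $o(t_k^2)$ remainder over bounded $w_k$, and the fact that sequences leaving $\Ker\Phi$ cannot lower the liminf); both points are handled correctly, so there is no gap.
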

\begin{proof} Since $x_0$ is a minimizer, \eqref{eq:Inbd2} tells us that $\Ker\Phi \cap{\rm int}\, C_R(x_0)=\emptyset$. The equivalence between {\bf (i)}, {\bf (ii)}, and {\bf (iii)} follows directly from~\cite[Theorem~5.1 and Theorem~5.3]{FNT23}. \end{proof}

We are ready to compute the set in \eqref{con:SOSC} for problem~\eqref{p:P12}.

\begin{Theorem}[Stable recovery for analysis group sparsity regularized linear inverse problems]\label{thm:Full}
We have 
\begin{eqnarray}
\begin{aligned}\label{eq:Tg}
\Ker \Phi \cap T_{\partial R^*({\rm Im}\, \Phi^*)}(x_0) \subset &\left\{w\in W(x_0)|\; D_\mathcal{I}A_{\mathcal{I}}D^*_{\mathcal{I}}w  \in  T_{ \left({\rm Im}\,\Phi^*+D_{\mathcal{I}^c}\left(\mathbb{S}_{\mathcal{K}(w)}\times\mathbb{B}_{\mathcal{H}(w)}\right)\right)\cap D_{\mathcal{I}}(\mathbb{S}_{\mathcal{I}})}(D_{\mathcal{I}}e_\mathcal{I})\right\}, 
\end{aligned}
\end{eqnarray}
where the index sets $\mathcal{K}(w)$ and $\mathcal{H}(w)$ are defined respectively by
\begin{equation}\label{eq:KHw}
  \mathcal{K}(w):=\{J\in \mathcal{I}^c|\; D^{*}_Jw\neq 0\}\quad\mbox{and}\quad   \mathcal{H}(w):=\{J\in \mathcal{I}^c|\; D^{*}_Jw = 0\}.
\end{equation}
Consequently, stable recovery occurs at $x_0$ for problem~\eqref{p:P12} when the following condition is satisfied 
\begin{equation}\label{con:full}
   \left\{w\in W(x_0)|\; D_\mathcal{I}A_{\mathcal{I}}D^*_{\mathcal{I}}w  \in  T_{ \left({\rm Im}\,\Phi^*+D_{\mathcal{I}^c}\left(\mathbb{S}_{\mathcal{K}(w)}\times\mathbb{B}_{\mathcal{H}(w)}\right)\right)\cap D_{\mathcal{I}}(\mathbb{S}_{\mathcal{I}})}(D_{\mathcal{I}}e_\mathcal{I}) \right\}=\{0\}.
    \end{equation}
If, additionally, $D^*$ is surjective, the inclusion ``$\subset$'' in condition \eqref{eq:Tg} turns into equality. In this case, condition~\eqref{con:full} is also necessary for stable recovery at $x_0$.
\end{Theorem}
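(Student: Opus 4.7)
The plan is to invoke Theorem~\ref{thm:Rob} and reduce matters to a direct calculation of $\Ker \Phi \cap T_{\partial R^*(\Im \Phi^*)}(x_0)$ for $R(x) = \|D^*x\|_{1,2}$. The forward inclusion in \eqref{eq:Tg} will follow from a Taylor expansion of the subdifferential of $\|\cdot\|_{1,2}$ combined with Proposition~\ref{prop:Est}, and the equality under surjectivity of $D^*$ will require a careful reconstruction that pins down the limit of the dual sequence via Proposition~\ref{prop:Crit}; the sufficiency of \eqref{con:full} for stable recovery is then immediate from Theorem~\ref{thm:Rob}.

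For the forward inclusion, take any $w \in \Ker \Phi \cap T_{\partial R^*(\Im \Phi^*)}(x_0)$. Proposition~\ref{prop:Est} already gives $w \in W(x_0)$. Unpacking the tangent cone definition yields $t_k \downarrow 0$, $w_k \to w$, and sequences $u_k \in \partial \|D^*(x_0 + t_k w_k)\|_{1,2}$, $\phi_k \in \YY$ with $Du_k = \Phi^* \phi_k$. For $J \in \mathcal{I}$ the components $\oy_J + t_k D^*_J w_k$ stay away from $0$ for large $k$, so $(u_k)_J = e_J + t_k A_J D^*_J w_k + O(t_k^2)$ by \eqref{def:AJ}; for $J \in \mathcal{K}(w)$ one gets $(u_k)_J \in \mathbb{S}$ because $D^*_J w_k \ne 0$ eventually; for $J \in \mathcal{H}(w)$ one gets $(u_k)_J \in \B$ unconditionally. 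Splitting $Du_k = \Phi^* \phi_k$ by blocks and using the symmetries $-\mathbb{S} = \mathbb{S}$ and $-\B = \B$ gives
\[
D_\mathcal{I} e_\mathcal{I} + t_k\bigl[D_\mathcal{I} A_\mathcal{I} D^*_\mathcal{I} w + o(1)\bigr] = D_\mathcal{I}(u_k)_\mathcal{I} \in \bigl[\Im \Phi^* + D_{\mathcal{I}^c}(\mathbb{S}_{\mathcal{K}(w)} \times \B_{\mathcal{H}(w)})\bigr] \cap D_\mathcal{I}(\mathbb{S}_\mathcal{I}),
\]
so that $D_\mathcal{I} A_\mathcal{I} D^*_\mathcal{I} w$ belongs to the claimed tangent cone at $D_\mathcal{I} e_\mathcal{I}$, proving \eqref{eq:Tg}. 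Theorem~\ref{thm:Rob} then yields stable recovery under \eqref{con:full}.

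For the reverse inclusion, assume $D^*$ is surjective so that $D$, and hence each $D_\mathcal{I}$, is injective. Given $w \in W(x_0)$ satisfying the tangent-cone condition, the definition of the tangent cone supplies $t_k \downarrow 0$, $z_k \to D_\mathcal{I} A_\mathcal{I} D^*_\mathcal{I} w$, and decompositions $D_\mathcal{I} e_\mathcal{I} + t_k z_k = D_\mathcal{I} s_k = \Phi^* \phi_k + D_{\mathcal{I}^c} \tilde{u}_k$ with $s_k \in \mathbb{S}_\mathcal{I}$ and $\tilde{u}_k \in \mathbb{S}_{\mathcal{K}(w)} \times \B_{\mathcal{H}(w)}$. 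Injectivity of $D_\mathcal{I}$ forces $s_k = e_\mathcal{I} + t_k A_\mathcal{I} D^*_\mathcal{I} w + o(t_k)$. Passing to a subsequence using compactness of $\mathbb{S}_{\mathcal{K}(w)} \times \B_{\mathcal{H}(w)}$, one has $\tilde{u}_k \to \tilde{u}_\infty$, and closedness of $\Im \Phi^*$ makes $v^\infty := (e_\mathcal{I}, -\tilde{u}_\infty) \in \partial \|\oy\|_{1,2}$ a dual certificate with $D v^\infty \in \Im \Phi^*$.

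The key and hardest step is to identify $\tilde{u}_\infty^{\mathcal{K}(w)}$. Since $w \in W(x_0)$, applying Proposition~\ref{prop:Crit} with the certificate $v = v^\infty$ gives $D^*_J w \in \R_+(v^\infty)_J = \R_+(-\tilde{u}_\infty^J)$ for $J \in \mathcal{K}(w) \subset \mathcal{K}(v^\infty)$, and combining this with $\|\tilde{u}_\infty^J\| = 1$ and $D^*_J w \neq 0$ uniquely forces $\tilde{u}_\infty^J = -D^*_J w/\|D^*_J w\|$. With this limit fixed, I use the surjectivity of $D^*$ to build $w_k$ by specifying $D^* w_k$ blockwise: set $D^*_J w_k = (\mu_J^k s_J^k - \oy_J)/t_k$ with $\mu_J^k = \|\oy_J\| + t_k \la e_J, D^*_J w\ra$ for $J \in \mathcal{I}$, set $D^*_J w_k = -\|D^*_J w\|\,\tilde{u}_J^k$ for $J \in \mathcal{K}(w)$, and $D^*_J w_k = 0$ for $J \in \mathcal{H}(w)$. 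A direct check shows each $D^*_J w_k \to D^*_J w$, so $w_k \to w$, while $u_k := (s_k, -\tilde{u}_k)$ satisfies $u_k \in \partial \|D^*(x_0 + t_k w_k)\|_{1,2}$ and $D u_k = D_\mathcal{I} s_k - D_{\mathcal{I}^c}\tilde{u}_k = \Phi^* \phi_k \in \Im \Phi^*$. Hence $x_0 + t_k w_k \in \partial R^*(\Im \Phi^*)$, $w$ lies in the tangent cone, and \eqref{con:full} becomes necessary for stable recovery via Theorem~\ref{thm:Rob}.
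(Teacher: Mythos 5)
Your proof follows essentially the same route as the paper's: the forward inclusion via the blockwise Taylor expansion of the subgradient on the active groups together with the symmetry of $\mathbb{S}\times\mathbb{B}$ on the inactive ones, and the reverse inclusion via injectivity of $D_{\mathcal{I}}$, compactness to extract a limiting certificate, and Proposition~\ref{prop:Crit} applied to that certificate to pin down $\tilde u_\infty^J=-D^*_Jw/\|D^*_Jw\|$ on $\mathcal{K}(w)$. The only step left implicit is the passage from $D^*_Jw_k\to D^*_Jw$ to $w_k\to w$: since $D^*$ is surjective but generally not injective, the prescribed blocks do not determine $w_k$, and you must select $w_k:=(D^*)^{\dag}y^k+\bigl(w-(D^*)^{\dag}D^*w\bigr)$ as the paper does, so that $D^*w_k=y^k$ and $w_k-w=(D^*)^{\dag}(y^k-D^*w)\to 0$; with that choice your argument is complete.
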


\begin{proof} To prove the inclusion ``$\subset$'' in \eqref{eq:Tg}, pick any $w\in \Ker \Phi \cap T_{\partial R^*({\rm Im}\, \Phi^*)}(x_0)$. By Proposition~\ref{prop:Est}, $w\in W(x_0)$. By the definition of the tangent cone, there exist sequences $t_k\dn 0$, $w_k\in \R^n$, $v_k\in \Im \Phi^*$ such that $x_0+t_kw_k\in \partial R^*(v_k)$ and $w_k\to w$. As $v_k\in \partial R(x_0+t_kw_k)$, we have 
\begin{equation}\label{eq:vJ}
\begin{aligned}
    v_k &\in D(\partial \| \cdot \|_{1,2}(D^*(x_0+t_kw_k)))=D\left(\prod_{J\in \mathcal{J} }\partial \| \cdot \|(D_J^*(x_0+t_kw_k)) \right)\\
    &\subset \sum_{J \in \mathcal{I}}D_J\left( \frac{D^*_J(x_0+t_kw_k)}{\|D^*_J(x_0+t_kw_k)\|}\right)+\sum_{J \in \mathcal{K}(w)}D_J\left( \frac{D^*_Jw_k}{\|D^*_Jw_k\|}\right)+\sum_{J \in \mathcal{H}(w)}D_J\mathbb{B}_J.
\end{aligned}
\end{equation}
This expression allows us to find some $b^k_J\in \B_J $ for $J\in \mathcal{H}(w)$ such that
\begin{equation}
    v_k=\sum_{J \in \mathcal{I}}D_J\left( \frac{D^*_J(x_0+t_kw_k)}{\|D^*_J(x_0+t_kw_k)\|}\right)+\sum_{J \in \mathcal{K}(w)}D_J\left( \frac{D^*_Jw_k}{\|D^*_Jw_k\|}\right)+\sum_{J \in \mathcal{H}(w)}D_J(b^k_J).
\end{equation}
For any $J\in \mathcal{I}$, by linear approximation, we obtain from 
\eqref{def:AJ} that 
\begin{equation}\label{eq:Hes2}
\dfrac{D_J(D^*_{J}(x_0+t_kw_k))}{\|D^*_{J}(x_0+t_kw_k)\|}=D_J(e_J+t_k A_JD^*_{J}w_k+o(t_k)).
\end{equation}  
 Since $v_{k}$ belongs to $\Im \Phi^{*}$,  it follows from the definition of $A_\mathcal{I}$ in \eqref{def:AI} that
\begin{equation}\label{eq:vkB}
\begin{aligned}
    D_\mathcal{I}e_\mathcal{I}+t_k D_\mathcal{I}A_\mathcal{I}D^*_\mathcal{I}w_k+o(t_k)&=v_k - \sum_{J \in \mathcal{K}(w)}D_J\left( \frac{D^*_Jw_k}{\|D^*_Jw_k\|}\right)+\sum_{J \in \mathcal{H}(w)}D_J(b^k_J)\\ &\in \Im\Phi^*+
D_{\mathcal{I}^c}(\mathbb{S}_{\mathcal{K}(w)} \times \B_{\mathcal{H}(w)}). 
\end{aligned}
\end{equation}
 Moreover, it follows from \eqref{eq:Hes2} that $D_\mathcal{I}e_\mathcal{I}+t_k D_\mathcal{I}A_\mathcal{I}D^*_\mathcal{I}w_k+o(t_k)\in D_\mathcal{I}(\mathbb{S}_\mathcal{I})$. By combining this with \eqref{eq:vkB}, we deduce from the definition of the tangent cone \eqref{eq:Tangent} that 
\[D_\mathcal{I}A_{\mathcal{I}}D^*_{\mathcal{I}}w  \in  T_{ \left({\rm Im}\Phi^*+D_{\mathcal{I}^c}\left(\mathbb{S}_{\mathcal{K}(w)}\times\mathbb{B}_{\mathcal{H}(w)}\right)\right)\cap D_{\mathcal{I}}(\mathbb{S}_{\mathcal{I}})}(D_{\mathcal{I}}e_\mathcal{I}),
\]
which verifies the inclusion \eqref{eq:Tg}. By Theorem~\ref{thm:Rob}, \eqref{con:full} implies stable recovery at $x_0$.

Next, let us justify the inclusion ``$\supset$'' in \eqref{eq:Tg} when $D^*$ is surjective. Pick any $w\in W(x_0)$ in  the right-hand side of \eqref{eq:Tg}. We find sequences $t_k \downarrow 0$ and $ z^k \to D_{\mathcal{I}} A_\mathcal{I} D^*_{\mathcal{I}}w $ such that 
\begin{equation}\label{eq:zk}
D_{\mathcal{I}}e_{\mathcal{I}} + t_kz^k \in \left (\Im\Phi^*+D_{\mathcal{I}^c}(\mathbb{S}_{\mathcal{K}(w)} \times \B_{\mathcal{H}(w)})\right) \cap D_{\mathcal{I}}(\mathbb{S}_\mathcal{I}).
\end{equation}
To ensure that $w\in \Ker \Phi\cap T_{\partial R^*({\rm Im}\, \Phi^*)}(x_0)$, we will construct sequences $\{w_k\}\subset \R^n$ and $\{v^k\}\subset \Im \Phi^*$ such that  $v^k\in \partial R(x_0+t_kw^k)$ and $w_k\to w$. Since $D^*$ is surjective, $D_\mathcal{I}$ is injective. As $z^k\in \Im D_\mathcal{I}$ by \eqref{eq:zk}, there exists a unique $u^k$ such that $z^k = D_\mathcal{I}u^k$. It follows that $u^k\to A_\mathcal{I} D^*_{\mathcal{I}}w$. This together with \eqref{eq:zk} gives us that $e_{\mathcal{I}}+t_ku^k$ belongs to $\mathbb{S}_\mathcal{I}$ due to the injectivity of  $D_\mathcal{I}$. Define
\begin{equation}\label{eq:wkI}
y^k_J:= \la e_J, D_J^*w\ra e_J + (\|\oy_J\| + t_k \la e_{J}, D^*_Jw \ra )u^k_J\quad \mbox{for any}\quad J \in \mathcal{I}.
\end{equation}
Note that 
\[\begin{aligned}
\oy_J+t_ky_J^k&=\|\oy_J\|e_J + t_k\la e_J, D^*_Jw \ra e_J +t_k(\|\oy_J\|  + t_k \la e_{J}, D^*_Jw \ra )u_J^k\\
&=(\|\oy_J\|+t_k\la e_J, D^*_Jw\ra)(e_J+t_ku^k_{J}).
\end{aligned}
\]
Since $e_J+t_ku^k_{J} \in \mathbb{S}_J$, we have $\|\oy_J+t_ky^k_J\|=\|\oy_J\|+t_k \la e_J, D^*_{J}w \ra$,  which is positive for sufficiently large $k$, as $\|\oy_J\|>0$ for $J\in \mathcal{I}$. The above equality implies that  
\begin{equation}\label{eq:vkI}
D_\mathcal{I}e_\mathcal{I}+t_kz^k=D_\mathcal{I}(e_\mathcal{I}+t_ku^k)=D_\mathcal{I}\left(\disp\prod_{J\in \mathcal{I}}\dfrac{\oy_J+t_ky^k_J}{\|\oy_J+t_ky^k_J\|}\right).
\end{equation}
As $u^k\to A_\mathcal{I} D^*_{\mathcal{I}}w$, it follows  from \eqref{eq:wkI}, \eqref{eq:Hes1}, and \eqref{def:AI} that 
\begin{equation}\label{eq:limwkj}
\begin{aligned}
\lim\limits_{k \to \infty} y^k_J &=  \la e_J, D_J^*w \ra e_J + \|\oy_J\| A_{J}D^*_{J}w \\
&=  \la e_J, D_J^*w \ra e_J+ \|\oy_J\|\left ( \frac{1}{\|\oy_J\|}\Id_J -\frac{1}{\|\oy_J\|^3}\oy_J(\oy_J)^T\right)D^*_{J}w  \\
&=  \la e_J, D_J^*w\ra e_J+ \left( \Id_J -\frac{\oy_J}{\|\oy_J\|}\frac{(\oy_J)^T}{\|\oy_J\|}\right) D^*_{J}w\\
&=  \la e_J, D_J^*w \ra e_J+ D^*_{J}w - e_J \la e_J, D_J^*w \ra \\
&= D^*_{J}w.
\end{aligned}
\end{equation}
Due to \eqref{eq:zk}, there exists some $b^k\in \mathbb{S}_{\mathcal{K}(w)} \times \mathbb{B}_{\mathcal{H}(w)}$ such that 
\begin{equation}\label{eq:uk2}
D_{\mathcal{I}}(e_\mathcal{I}+t_ku^k)+D_{\mathcal{I}^c}(b^k) \in \Im\Phi^*.
\end{equation}
Since $b^k\in \mathbb{S}_{\mathcal{K}(w)} \times \mathbb{B}_{\mathcal{H}(w)}$, we may suppose that $b^k \to b\in\mathbb{S}_{\mathcal{K}(w)} \times \mathbb{B}_{\mathcal{H}(w)}$. By \eqref{eq:Sub}, we deduce that $D(e_\mathcal{I},b)\in \Delta(x_0)$. As $w\in W(x_0)$, for any $J\in \mathcal{K}(w)$, it follows from  Proposition \ref{prop:Crit} that  $D^*_Jw=\lm_Jb_J$ for some  $\lm_J\ge 0$, which implies that $\lm_J=\|D^*_Jw\|$ and $\|D^*_Jw\|b^k_J \to D^*_Jw $.
Let us set  
\begin{equation}\label{eq:wKH}
       y^k:=\left\{\begin{array}{ll} \la e_J, D_J^*w \ra e_J +(\|\oy_J\| + t_k \la e_{J}, D^*_Jw \ra )u^k_J\quad &\mbox{for}\quad J\in \mathcal{I}\;\; \mbox{  from  }\;\;\eqref{eq:wkI}\\ 
   \|D^*_Jw\| b^k_J \quad &\mbox{for}\quad J\in \mathcal{K}(w)\\ 
   0& \mbox{for}\quad J\in \mathcal{H}(w).
\end{array}\right.
\end{equation}
As $D^*_Jw=0$ for any $w\in \mathcal{H}(w)$ by \eqref{eq:KHw}, we obtain from \eqref{eq:limwkj} and \eqref{eq:wKH} that $y^k\to D^*w$.

Next, let us build up the sequence $\{w_k\}$ satisfying $D^*w_k=y^k$ with $w_k\to w$, as $k\to \infty$. Define  $a:=w-(D^*)^{\dag}D^*w$, where $(D^*)^{\dag}$ is the {\em Moore-Penrose inverse} of $D^*$. We have $a\in \Ker D^*$. Set $w_k:=(D^*)^{\dag}y^k+a$. As $y^k\to D^*w$, it follows that  
\[
w_k-w=(D^*)^\dag(y^k-D^*w)\to 0. 
\]
Since $a\in \Ker D^*$ and $D^*$ is surjective, $D^*w_k=y^k$. Finally, note from \eqref{eq:vkI}, \eqref{eq:uk2},  \eqref{eq:wKH}, and the fact that $\|b^k_J\|=1$ for any $J\in \mathcal{K}(w)$ that 
\[
v_k:=D(e_\mathcal{I}+t_ku^k,b^k)=D\left(\prod_{J\in \mathcal{I}}\dfrac{\oy_J+t_ky^k_J}{\|\oy_J+t_ky^k_J\|}, \prod_{J\in \mathcal{K}(w)}\dfrac{D^*_{J}w_k}{\|D^*_{J}w_k\|},b^k_{\mathcal{H}(w)}\right)\in D\partial \|\oy+t_kD^*w_k\|_{1,2}\cap\Im\Phi^*.
\]
It follows that  $v_k\in \partial R(x_0+t_kw_k)\cap \Im\Phi^*$, which implies $x_0+t_kw_k\in \partial R^*(\Im \Phi^*)$. As $w_k\to w$, we have $w\in 
 T_{\partial R^*({\rm Im}\, \Phi^*)}(x_0)$. The inclusion ``$\supset$'' in \eqref{eq:Tg} is verified. In this case, condition~\eqref{con:full} is equivalent to stable recovery at $x_0$ thanks to Theorem~\ref{thm:Rob}. 
 \end{proof}

The 2D discrete gradient operator \eqref{eq:TV} is never surjective. It is not clear to us if the inclusion in \eqref{eq:Tg} can turn to equality in this case. Nonetheless, for the group sparsity regularized linear inverse problems, $D^*$ is the identity matrix, which is surjective obviously. The following result gives a simplified form of Theorem~\ref{thm:Full} for this case.

\begin{Corollary}[Stable recovery for the group sparsity regularized linear inverse problems]\label{thm:aa}
Let  $R=\|\cdot\|_{1,2}$ be the $\ell_1/\ell_2$ norm defined in \eqref{eq:l12} and $x_0$ be a minimizer of problem \eqref{p:P12} with $D$ being the identity matrix. Then we have 
\begin{eqnarray}\label{eq:Tg1}
\Ker \Phi \cap T_{\partial R^*({\rm Im}\, \Phi^*)}(x_0) = \left\{w\in W(x_0)|\; A_\mathcal{I}w_\mathcal{I}\in  T_{ \Phi^*_\mathcal{I}(\Phi^{*}_{\mathcal{I}^{c}})^{-1}{(\mathbb{S}_{\mathcal{K}(w)} \times \mathbb{B}_{\mathcal{H}(w)}}) \cap \mathbb{S}_\mathcal{I}}(e_\mathcal{I})\right\}.
\end{eqnarray}
Consequently, $x_0$ is stably recoverable if only if  
\begin{equation}\label{eq:Tg2}
    \left\{w\in W(x_0)|\; A_\mathcal{I}w_\mathcal{I}\in  T_{ \Phi^*_\mathcal{I}(\Phi^{*}_{\mathcal{I}^{c}})^{-1}{(\mathbb{S}_{\mathcal{K}(w)} \times \mathbb{B}_{\mathcal{H}(w)}}) \cap \mathbb{S}_\mathcal{I}}(e_\mathcal{I})\right\}=\{0\}.
\end{equation}
\end{Corollary}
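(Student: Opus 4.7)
The plan is to invoke Theorem~\ref{thm:Full} with $D = \Id$ and then simplify the resulting tangent cone expression. Since $D^{*}=\Id$ is trivially surjective, Theorem~\ref{thm:Full} yields the equality in \eqref{eq:Tg}, so the condition \eqref{con:full} is both necessary and sufficient for stable recovery at $x_0$. Everything then reduces to rewriting \eqref{con:full} as \eqref{eq:Tg2}.

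The first step is to unpack the $D$-dependent notation. When $D=\Id$, the operator $D_{J}^{*}$ is just the restriction map $x\mapsto x_{J}$, the map $D_{\mathcal{I}}:\R^{\mathcal{I}}\hookrightarrow\R^{n}$ is the natural injection that places its argument on the $\mathcal{I}$-coordinates and zero on $\mathcal{I}^{c}$, and similarly for $D_{\mathcal{I}^{c}}$. Consequently, $D_{\mathcal{I}}^{*}w=w_{\mathcal{I}}$, and $D_{\mathcal{I}}A_{\mathcal{I}}D_{\mathcal{I}}^{*}w$ is the embedding of $A_{\mathcal{I}}w_{\mathcal{I}}\in\R^{\mathcal{I}}$ into $\R^{n}$. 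Because $D_{\mathcal{I}}$ is a linear isometric embedding onto its image, tangent cones commute with it in the sense that $T_{D_{\mathcal{I}}(E)}(D_{\mathcal{I}}(z))=D_{\mathcal{I}}(T_{E}(z))$ for any closed $E\subset\R^{\mathcal{I}}$ and $z\in E$.

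The main step is to identify the set inside the tangent cone in \eqref{eq:Tg} with the embedding of a set in $\R^{\mathcal{I}}$. A vector $v$ belongs to $(\Im\Phi^{*}+D_{\mathcal{I}^{c}}(\mathbb{S}_{\mathcal{K}(w)}\times\mathbb{B}_{\mathcal{H}(w)}))\cap D_{\mathcal{I}}(\mathbb{S}_{\mathcal{I}})$ if and only if there exist $u\in\R^{m}$ and $b\in\mathbb{S}_{\mathcal{K}(w)}\times\mathbb{B}_{\mathcal{H}(w)}$ with
\begin{equation*}
v_{\mathcal{I}}=\Phi_{\mathcal{I}}^{*}u,\quad \|v_{J}\|=1\ \ (J\in\mathcal{I}),\quad v_{\mathcal{I}^{c}}=\Phi_{\mathcal{I}^{c}}^{*}u+b=0.
\end{equation*}
The third relation forces $b=-\Phi_{\mathcal{I}^{c}}^{*}u$; since each block of $\mathbb{S}_{\mathcal{K}(w)}\times\mathbb{B}_{\mathcal{H}(w)}$ is symmetric, this is equivalent to $\Phi_{\mathcal{I}^{c}}^{*}u\in\mathbb{S}_{\mathcal{K}(w)}\times\mathbb{B}_{\mathcal{H}(w)}$. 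Thus the above set is the image under $D_{\mathcal{I}}$ of
\begin{equation*}
\Phi_{\mathcal{I}}^{*}(\Phi_{\mathcal{I}^{c}}^{*})^{-1}(\mathbb{S}_{\mathcal{K}(w)}\times\mathbb{B}_{\mathcal{H}(w)})\cap\mathbb{S}_{\mathcal{I}}\ \subset\ \R^{\mathcal{I}},
\end{equation*}
and $D_{\mathcal{I}}e_{\mathcal{I}}$ corresponds to $e_{\mathcal{I}}$ under this bijection.

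Combining these two ingredients, the containment $D_{\mathcal{I}}A_{\mathcal{I}}D_{\mathcal{I}}^{*}w\in T_{(\cdots)\cap D_{\mathcal{I}}(\mathbb{S}_{\mathcal{I}})}(D_{\mathcal{I}}e_{\mathcal{I}})$ appearing in \eqref{con:full} is equivalent, after removing the embedding $D_{\mathcal{I}}$, to the condition $A_{\mathcal{I}}w_{\mathcal{I}}\in T_{\Phi_{\mathcal{I}}^{*}(\Phi_{\mathcal{I}^{c}}^{*})^{-1}(\mathbb{S}_{\mathcal{K}(w)}\times\mathbb{B}_{\mathcal{H}(w)})\cap\mathbb{S}_{\mathcal{I}}}(e_{\mathcal{I}})$. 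This yields \eqref{eq:Tg1}, and then \eqref{eq:Tg2} is the characterization of stable recovery given by Theorem~\ref{thm:Rob} together with the surjective case of Theorem~\ref{thm:Full}. The only subtle point in the argument is verifying that the tangent cone commutes with the linear injection $D_{\mathcal{I}}$ when we also intersect with the polyhedral-times-sphere constraint set $D_{\mathcal{I}}(\mathbb{S}_{\mathcal{I}})$; this is immediate from the definition \eqref{eq:Tangent} because every approximating sequence $x_{0}+t_{k}w_{k}$ in $D_{\mathcal{I}}(\mathbb{S}_{\mathcal{I}})$ automatically has support inside $\mathcal{I}$, so both the base point and the perturbations lie in the range of $D_{\mathcal{I}}$.
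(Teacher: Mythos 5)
Your proposal is correct and follows essentially the same route as the paper's proof: specialize Theorem~\ref{thm:Full} to the surjective case $D^*=\Id$, identify the constraint set $\left({\rm Im}\,\Phi^*+D_{\mathcal{I}^c}(\mathbb{S}_{\mathcal{K}(w)}\times\mathbb{B}_{\mathcal{H}(w)})\right)\cap D_{\mathcal{I}}(\mathbb{S}_{\mathcal{I}})$ as the embedding of $\Phi^*_\mathcal{I}(\Phi^{*}_{\mathcal{I}^{c}})^{-1}(\mathbb{S}_{\mathcal{K}(w)}\times\mathbb{B}_{\mathcal{H}(w)})\cap\mathbb{S}_\mathcal{I}$, and strip off the injection $D_\mathcal{I}$ from the tangent cone. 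Your explicit remark that $b=-\Phi^*_{\mathcal{I}^c}u$ is absorbed by the symmetry of $\mathbb{S}_{\mathcal{K}(w)}\times\mathbb{B}_{\mathcal{H}(w)}$ is a detail the paper passes over silently, but it changes nothing.
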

\begin{proof} In the spirit of Theorem~\ref{thm:Full}, we just need to show that  the right-hand side of \eqref{eq:Tg} is exactly the right-hand side of \eqref{eq:Tg1}.
Let $v\in \Delta(x_0)$ be a dual certificate of problem \eqref{p:P12} with $D=\Id$. Note that the set $W(x_0)$ in \eqref{eq:KCrit} reduces to 
\begin{equation*}\label{eq:KCrit2}
W(x_0)=\{w\in \Ker\Phi|\; w_J\in \R_+v_J, J\in \mathcal{K}, w_J=0, J\in \mathcal{H}\}.
\end{equation*}
The condition inside the right-hand side of \eqref{eq:Tg} is simplified by
\begin{equation}\label{eq:AwI}
(A_{\mathcal{I}}w_\mathcal{I},0_{\mathcal{I}^c}) \in T_{ \left({\rm Im}\,\Phi^*+\{0_{\mathcal{I}}\} \times \mathbb{S}_{\mathcal{K}(w)}\times\mathbb{B}_{\mathcal{H}(w)}\right)\cap \left(\mathbb{S}_{\mathcal{I}}\times \{0_{\mathcal{I}^c}\}\right)}\left(e_{\mathcal{I}}\times \{0_{\mathcal{I}^c}\}\right).
\end{equation}
Note that the set in the above tangent cone can be formulated as   \begin{equation*}
\left\{(\Phi^*_{\mathcal{I}}u,0_{\mathcal{I}^c})|\; \Phi^*_{\mathcal{I}}u\in  \mathbb{S}_{\mathcal{I}}, \Phi^*_{\mathcal{I}^c}u \in \mathbb{S}_{\mathcal{K}(w)}\times \mathbb{B}_{\mathcal{H}(w)} \right\}=\left(\Phi^*_\mathcal{I}(\Phi^{*}_{\mathcal{I}^{c}})^{-1}{(\mathbb{S}_{\mathcal{K}(w)} \times \mathbb{B}_{\mathcal{H}(w)}}) \cap \mathbb{S}_\mathcal{I}\right)\times \{0_{\mathcal{I}^c}\}.
\end{equation*}
Thus, \eqref{eq:AwI} is equivalent to
\[
A_{\mathcal{I}}w_\mathcal{I} \in T_{\Phi^*_\mathcal{I}(\Phi^{*}_{\mathcal{I}^{c}})^{-1}{(\mathbb{S}_{\mathcal{K}(w)} \times \mathbb{B}_{\mathcal{H}(w)}}) \cap \mathbb{S}_\mathcal{I}}(e_\mathcal{I}).
\]
This verifies \eqref{eq:Tg1} and completes the proof. 
\end{proof}


A drawback of  expressions \eqref{eq:Tg} and \eqref{eq:Tg1} is that the index sets $\mathcal{K}(w)$ and $\mathcal{H}(w)$ depend on $w$. Given  $v\in \partial\|\oy\|_{1,2}$ with $Dv\in \Delta(x_0)$, the index sets  $\mathcal{K}$ and $\mathcal{H}$ defined in \eqref{def:KH} are independent of $w$. Note from \eqref{eq:KCrit} that 
\begin{equation}\label{eq:KKw}  
\mathcal{K}\supset \mathcal{K}(w)\quad \mbox{and}\quad \mathcal{H}\subset \mathcal{H}(w).
\end{equation}
Later on, we will come up with a sufficient condition for stable recovery by using $\mathcal{K}$ and $\mathcal{H}$ only. One condition that can make $\mathcal{K}(w)$ fixed is $\mathcal{K}=\emptyset$. This condition is known as the {\em Nondegeneracy Source Condition} \cite{GSH11,FPVDS13,FNT23,VPF15} at $x_0$ for problem~\eqref{p:P12}:
\begin{equation}\label{def:ND}
\exists\, v\in \Delta(x_0): \; v\in \ri (\partial R(x_0)),
\end{equation}
where $\ri (\partial R(x_0))$ is the relative interior of $\partial R(x_0)$ from \eqref{def:Ri}. 
As $\ri D\left(\partial \|\oy\|_{1,2}\right)=D\left(\ri\partial \|\oy\|_{1,2}\right)$ by \cite[Theorem~6.6]{R70}, we have  \begin{equation}\label{eq:ri}
\ri (\partial R(x_0))=\ri D\left(\partial \|\oy\|_{1,2}\right)=D\left\{z\in \R^p|\; z_\mathcal{I}=e_\mathcal{I}, \|z_J\|<1\, \forall\, J\in \mathcal{I}^c\right\}.
\end{equation}
The Nondegeneracy Source Condition \eqref{def:ND}
means that there exists $v\in \ri\partial \|\oy\|_{1,2}$ with $Dv\in \Delta(x_0)$ such that the corresponding index set $\mathcal{K}$ in \eqref{def:KH} is empty. According to \eqref{eq:KKw}, $\mathcal{K}(w)=\emptyset$ for any $w\in W(x_0)$.   It is known from \cite[Theorem~3.10 and Theorem~4.6]{FNT23} that Nondegeneracy Source Condition together with the so-called {\em Restricted Injectivity} \cite{FPVDS13,CR13} (different from \eqref{con:G11})
 \begin{equation}\label{def:RI}
     \Ker \Phi\cap \Ker D^*_{\mathcal{I}^c}=\{0\}
 \end{equation}
 is equivalent to sharp minimum at $x_0$ for problem~\eqref{p:P12}, which guarantees stable recovery at $x_0$. We show next that the above Restricted Injectivity is not necessary for stable recovery.

\begin{Corollary}[Stable recovery and solution uniqueness under the Nondegeneracy Source Condition]\label{cor:ND} Let $x_0$ be an optimal solution of problem~\eqref{p:P12}. Suppose that the Nondegeneracy Source Condition~\eqref{def:ND} is satisfied. Then   $x_0$ is stably recoverable  if and only if it is a unique (strong) minimizer of problem~\eqref{p:P12}. 
\end{Corollary}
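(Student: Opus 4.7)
\begin{pf}[Proof proposal]
The ``only if'' direction is free: the remark following Theorem~\ref{thm:Rob} (see also \cite[Proposition~3.1]{FNT23}) says that stable recovery forces $x_0$ to be the unique minimizer, and Theorem~\ref{thm:Equi} then upgrades uniqueness to a strong minimum. So the real content is the ``if'' direction: assuming NDSC \eqref{def:ND} and that $x_0$ is the unique solution of~\eqref{p:P12}, the plan is to verify the sufficient condition \eqref{con:full} from Theorem~\ref{thm:Full}.

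Step 1. Exploit NDSC to freeze the index sets. Pick $v\in\partial\|\oy\|_{1,2}$ with $Dv\in\Delta(x_0)$ and $v\in\ri\partial R(x_0)$, so by \eqref{eq:ri} the corresponding sets in \eqref{def:KH} are $\mathcal{K}=\emptyset$ and $\mathcal{H}=\mathcal{I}^c$. Proposition~\ref{prop:Crit} then reads $W(x_0)=\Ker\Phi\cap\Ker D^*_{\mathcal{I}^c}$. Moreover, for every $w\in W(x_0)$ the inclusions in \eqref{eq:KKw} collapse the $w$-dependent index sets in Theorem~\ref{thm:Full} to $\mathcal{K}(w)=\emptyset$ and $\mathcal{H}(w)=\mathcal{I}^c$, so \eqref{con:full} reduces to showing
\begin{equation*}
D_{\mathcal{I}}A_{\mathcal{I}}D^*_{\mathcal{I}}w\;\in\; T_{(\Im\Phi^*+D_{\mathcal{I}^c}(\B_{\mathcal{I}^c}))\cap D_{\mathcal{I}}(\mathbb{S}_{\mathcal{I}})}(D_{\mathcal{I}}e_{\mathcal{I}})\quad\Longrightarrow\quad w=0
\end{equation*}
for $w\in W(x_0)$.

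Step 2. Linearize the outer tangent cone. Because $\|v_J\|<1$ for every $J\in\mathcal{I}^c$, small perturbations of $v_{\mathcal{I}^c}$ stay in $\B_{\mathcal{I}^c}$; combined with $Dv=D_{\mathcal{I}}e_{\mathcal{I}}+D_{\mathcal{I}^c}v_{\mathcal{I}^c}\in\Im\Phi^*$, this shows that $D_{\mathcal{I}}e_{\mathcal{I}}$ lies ``in the interior'' of $\Im\Phi^*+D_{\mathcal{I}^c}(\B_{\mathcal{I}^c})$ relative to its affine hull. Hence
\begin{equation*}
T_{\Im\Phi^*+D_{\mathcal{I}^c}(\B_{\mathcal{I}^c})}(D_{\mathcal{I}}e_{\mathcal{I}})=\Im\Phi^*+\Im D_{\mathcal{I}^c}.
\end{equation*}
Using the general inclusion $T_{A\cap B}\subset T_A\cap T_B$, the hypothesis of Step~1 gives at least $D_{\mathcal{I}}A_{\mathcal{I}}D^*_{\mathcal{I}}w\in\Im\Phi^*+\Im D_{\mathcal{I}^c}$, which is all I need (no tangential regularity of the intersection is required).

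Step 3. Pair with $w$ and finish via Theorem~\ref{thm:Equi}. Write $D_{\mathcal{I}}A_{\mathcal{I}}D^*_{\mathcal{I}}w=\Phi^*u+D_{\mathcal{I}^c}z$. Taking the inner product with $w$ and using $\Phi w=0$ together with $D^*_{\mathcal{I}^c}w=0$ (from $w\in W(x_0)$) yields
\begin{equation*}
\la A_{\mathcal{I}}D^*_{\mathcal{I}}w,\,D^*_{\mathcal{I}}w\ra=\sum_{J\in\mathcal{I}}\la A_J D^*_J w,D^*_J w\ra=0.
\end{equation*}
Each $A_J$ in \eqref{def:AJ} is positive semidefinite with $\Ker A_J=\R e_J=\R\oy_J$, so every summand vanishing forces $D^*_J w\in\R\oy_J$ for all $J\in\mathcal{I}$, i.e.\ $w\in\mathcal{E}$ (see \eqref{def:ME}). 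Thus $w\in W(x_0)\cap\mathcal{E}$, and the equivalence $\text{(i)}\Leftrightarrow\text{(iii)}$ in Theorem~\ref{thm:Equi} (solution uniqueness) concludes $w=0$, verifying \eqref{con:full} and thereby stable recovery by Theorem~\ref{thm:Full}.

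The only step that is not purely bookkeeping is Step~2, where the tangent cone identity really does rely on the strict inequality $\|v_J\|<1$ afforded by NDSC; without it one would be stuck with a genuinely nonconvex tangent cone and the clean pairing argument in Step~3 would no longer go through. Everything else is a direct application of Theorems~\ref{thm:Rob}, \ref{thm:Equi} and \ref{thm:Full}.
\end{pf}
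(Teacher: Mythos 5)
Your proposal is correct and follows essentially the same route as the paper: use NDSC to force $\mathcal{K}=\emptyset$ so that Proposition~\ref{prop:Crit} gives $W(x_0)=\Ker\Phi\cap\Ker D^*_{\mathcal{I}^c}$, deduce from Theorem~\ref{thm:Full} that $D_{\mathcal{I}}A_{\mathcal{I}}D^*_{\mathcal{I}}w\in\Im\Phi^*+\Im D_{\mathcal{I}^c}$, pair with $w$ to conclude $w\in W(x_0)\cap\mathcal{E}$, and invoke Theorem~\ref{thm:Equi}. The only difference is that you make explicit the relative-interior/tangent-cone computation in your Step~2, which the paper asserts without detail; this is a worthwhile clarification but not a different argument.
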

\begin{proof} As solution uniqueness is necessary for stable recovery, it suffices to show that stable recovery occurs at $x_0$ if $x_0$ is a unique (strong) minimizer of problem~\eqref{p:P12} when the Nondegeneracy Source Condition~\eqref{def:ND} is satisfied. For any $w\in \Ker \Phi\cap T_{\partial R^*({\rm Im}\, \Phi^*)}(x_0)$, we get from \eqref{eq:Tg} that $w\in W(x_0)$ and $D_\mathcal{I}A_\mathcal{I}D^*_\mathcal{I}w\in \Im \Phi^*+\Im D_{\mathcal{I}^c}$. As $\mathcal{K}=\emptyset$, Proposition~\ref{prop:Crit} tells us that $D^*_\mathcal{H}w=D^*_{\mathcal{I}^c}w=0$, i.e., $w\in \Ker \Phi \cap \Ker D^*_{\mathcal{I}^c}$. It follows that 
\begin{equation}\label{eq:Zero}
0=\la D_\mathcal{I}A_\mathcal{I}D^*_\mathcal{I}w,w\ra=\la A_\mathcal{I}D^*_\mathcal{I}w,D^*_\mathcal{I}w\ra=\sum_{J\in \mathcal{I}}\la A_JD^*_Jw,D^*_Jw\ra,
\end{equation}
which implies that $D_J^*w\in \Ker A_J=\R\{\oy_J\}$ due to \eqref{def:AJ}. Hence, we have $w\in \mathcal{E}\cap W(x_0)$ by \eqref{def:ME}. Since $x_0$ is a unique (strong) solution of problem~\eqref{p:P12},  Theorem~\ref{thm:Equi} tells us that $w=0$. This ensures that $\Ker \Phi \cap T_{\partial R^*({\rm Im} \Phi^*)}(x_0)=\{0\}$, which means  stable recovery occurs at $x_0$ by Theorem~\ref{thm:Rob}. 
\end{proof}

\begin{Remark}[\rm Stable recovery without Restricted Injectivity]\label{F2}
\rm Recalling Example \ref{ex:l12} at which we show that stable recovery occurs at $x_0$ without sharp minima. Note that the dual certificate in that example  $v=\left(\frac{1}{\sqrt{2}},\frac{1}{\sqrt{2}},0\right)$ belongs to $\ri (\partial R(x_0))$, i.e., the Nondegeneracy Source Condition is satisfied. However, note that 
\[
\{w\in \Ker \Phi|\, w_\mathcal{H}=0\}=\Ker \Phi \cap \Ker \Id^{*}_{\mathcal{I}^c} = \R(1,-1,0) \cap (\R \times \R \times \{0\})=\R(1,-1,0)  \ne \{0\}, 
\]
which means both the Restricted Injectivity in \eqref{def:RI} and \eqref{con:G11} fail.
$\hfill\triangle$
\end{Remark}

We need to understand more about tangent cone on the right-hand side of \eqref{eq:Tg}. Observe that 
\begin{equation}\label{eq:TgSub1}
T_{ \left({\rm Im}\,\Phi^*+D_{\mathcal{I}^c}\left(\mathbb{S}_{\mathcal{K}(w)}\times\mathbb{B}_{\mathcal{H}(w)}\right)\right)\cap D_{\mathcal{I}}(\mathbb{S}_{\mathcal{I}})}(D_{\mathcal{I}}e_\mathcal{I})\subset T_{ \left({\rm Im}\,\Phi^*+D_{\mathcal{I}^c}\left(\mathbb{S}_{\mathcal{K}(w)}\times\mathbb{B}_{\mathcal{H}(w)}\right)\right)}(D_{\mathcal{I}}e_\mathcal{I}). 
\end{equation}
For any $v\in \partial \|\oy\|_{1,2}$ with $Dv\in \Delta(x_0)$ and the index pair $\mathcal{K}$ and $\mathcal{H}$ in \eqref{def:KH}, we derive from \eqref{eq:KKw} that  
\begin{equation}\label{eq:TgSub2}
T_{ \left({\rm Im}\,\Phi^*+D_{\mathcal{I}^c}\left(\mathbb{S}_{\mathcal{K}(w)}\times\mathbb{B}_{\mathcal{H}(w)}\right)\right)}(D_{\mathcal{I}}e_\mathcal{I})\subset T_{ \left({\rm Im}\,\Phi^*+{\rm Im}\, D_\mathcal{H}+D_{\mathcal{K}}\left(\mathbb{S}_{\mathcal{K}(w)}\times\mathbb{B}_{\mathcal{K}\setminus\mathcal{K}(w)}\right)\right)}(D_{\mathcal{I}}e_\mathcal{I}). 
\end{equation}
Note from \eqref{eq:Sub} that $D_{\mathcal{I}}e_\mathcal{I}=Dv-D_{\mathcal{H}}v_\mathcal{H}-D_{\mathcal{K}}v_\mathcal{K}\in\Im \Phi^*+{\rm Im}\, D_\mathcal{H}-D_{\mathcal{K}}v_\mathcal{K}$. It follows from the formula of tangents under set addition in \cite[Exercise 6.44]{RW98} and the tangent of image set \cite[Theorem~6.43]{RW98} that 
\begin{equation*}
\begin{aligned}
T_{ \left({\rm Im}\,\Phi^*+{\rm Im}\, D_\mathcal{H}+D_{\mathcal{K}}\left(\mathbb{S}_{\mathcal{K}(w)}\times\mathbb{B}_{\mathcal{K}\setminus\mathcal{K}(w)}\right)\right)}(D_{\mathcal{I}}e_\mathcal{I})&\supset \Im \Phi^*+\Im D_\mathcal{H}+T_{\left(D_{\mathcal{K}}\left(\mathbb{S}_{\mathcal{K}(w)}\times\mathbb{B}_{\mathcal{K}\setminus\mathcal{K}(w)}\right)\right)}(-D_{\mathcal{K}}v_\mathcal{K})\\
&\supset \Im \Phi^*+\Im D_\mathcal{H}+D_{\mathcal{K}}T_{\left(\mathbb{S}_{\mathcal{K}(w)}\times\mathbb{B}_{\mathcal{K}\setminus\mathcal{K}(w)}\right)}(-v_\mathcal{K}).
\end{aligned}
\end{equation*}
Of course, the above ``$\supset$'' inclusions are not helpful for obtaining an upper estimate of the left-hand side set in \eqref{eq:TgSub1}. But if they become equalities, we show next that stable recovery is equivalent to solution uniqueness.

\begin{Theorem}[Stable recovery and solution uniqueness]\label{thm:Equiva} Let  $x_0$ be an optimal solution of problem \eqref{p:P12} and $v \in \partial \|D^{*}x_0\|_{1,2}$ with $Dv\in \Delta(x_0)$. If $w\in \Ker\Phi \cap T_{\partial R^*({\rm Im}\, \Phi^*)}(x_0)$ satisfies the following condition 
\begin{eqnarray}\label{con:Tan}
 T_{\left({\rm Im}\, \Phi^* + {\rm Im} \,D_{\mathcal{H}} + D_{\mathcal{K}}(\mathbb{S}_{\mathcal{K}(w)} \times \B_{\mathcal{K}\setminus \mathcal{K}(w)})\right)}(D_{\mathcal{I}}e_{\mathcal{I}})={\rm Im}\, \Phi^* + {\rm Im}\, D_{\mathcal{H}} + D_{\mathcal{K}}T_{\mathbb{S}_{\mathcal{K}(w)} \times \B_{\mathcal{K}\setminus \mathcal{K}(w)}}(-v_{\mathcal{K}}), 
\end{eqnarray}
we have $w\in \mathcal{E}$. Consequently, if condition~\eqref{con:Tan} holds for any $w \in W(x_0) $, $x_0$ is stably recoverable if and only if it is the unique minimizer of problem~\eqref{p:P12}. 
\end{Theorem}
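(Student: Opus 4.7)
The plan is to derive from the hypothesis that $w \in \mathcal{E}$, and then invoke Theorem~\ref{thm:Equi} to close the argument. Starting with $w \in \Ker\Phi \cap T_{\partial R^*(\Im\Phi^*)}(x_0)$, the first step is to apply Theorem~\ref{thm:Full} to obtain $w \in W(x_0)$ together with
\[
D_\mathcal{I}A_\mathcal{I}D^*_\mathcal{I}w \in T_{(\Im\Phi^* + D_{\mathcal{I}^c}(\mathbb{S}_{\mathcal{K}(w)}\times\mathbb{B}_{\mathcal{H}(w)})) \cap D_\mathcal{I}(\mathbb{S}_\mathcal{I})}(D_\mathcal{I}e_\mathcal{I}).
\]
Next I would chain the inclusions \eqref{eq:TgSub1} and \eqref{eq:TgSub2} with the assumed tangent cone equality \eqref{con:Tan} to get the much more tractable containment
\[
D_\mathcal{I}A_\mathcal{I}D^*_\mathcal{I}w \in \Im\Phi^* + \Im D_\mathcal{H} + D_\mathcal{K}\,T_{\mathbb{S}_{\mathcal{K}(w)}\times\mathbb{B}_{\mathcal{K}\setminus\mathcal{K}(w)}}(-v_\mathcal{K}).
\]

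The heart of the proof is then to pair both sides with $w$ and show the right-hand side contributes zero. The $\Im\Phi^*$ piece vanishes since $w \in \Ker\Phi$. The $\Im D_\mathcal{H}$ piece vanishes because Proposition~\ref{prop:Crit} gives $D^*_J w = 0$ for $J \in \mathcal{H}$. The third piece is the delicate one: I would use that $-v_\mathcal{K} \in \mathbb{S}_\mathcal{K}$ (since by definition of $\mathcal{K}$ we have $\|v_J\|=1$ for $J\in\mathcal{K}$), so the standard formulas for tangent cones to the unit sphere and the unit ball give
\[
T_{\mathbb{S}_{\mathcal{K}(w)}\times\mathbb{B}_{\mathcal{K}\setminus\mathcal{K}(w)}}(-v_\mathcal{K}) = \{u \in \R^\mathcal{K}:\, \la u_J,v_J\ra = 0\ \forall J\in\mathcal{K}(w),\ \la u_J,v_J\ra \ge 0\ \forall J\in\mathcal{K}\setminus\mathcal{K}(w)\}.
\]
Then I would use Proposition~\ref{prop:Crit} once more: for $J \in \mathcal{K}(w)$, $D^*_J w$ is a strictly positive multiple of $v_J$, and for $J \in \mathcal{K}\setminus \mathcal{K}(w)$, $D^*_J w = 0$. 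Consequently $\la u_J, D^*_J w\ra = 0$ for all $J \in \mathcal{K}$ and every $u$ in the above tangent cone, so $\la D_\mathcal{K} u, w\ra = 0$.

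Taking the inner product with $w$ thus yields $\la A_\mathcal{I}D^*_\mathcal{I}w, D^*_\mathcal{I}w\ra = \sum_{J\in\mathcal{I}}\la A_J D^*_J w, D^*_J w\ra = 0$. Since each $A_J$ is positive semidefinite with $\Ker A_J = \R\{\oy_J\}$ by \eqref{def:AJ}, this forces $D^*_J w \in \R\{\oy_J\}$ for every $J \in \mathcal{I}$, i.e., $w \in \mathcal{E}$, which is the claim of the first part.

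For the consequence, assuming \eqref{con:Tan} holds for every $w \in W(x_0)$, the first part gives $\Ker\Phi \cap T_{\partial R^*(\Im\Phi^*)}(x_0) \subset W(x_0) \cap \mathcal{E}$. If $x_0$ is the unique solution of \eqref{p:P12}, Theorem~\ref{thm:Equi} yields $W(x_0)\cap\mathcal{E} = \{0\}$, so condition \eqref{con:SOSC} holds and stable recovery follows from Theorem~\ref{thm:Rob}. The reverse implication (stable recovery implies uniqueness) is the general observation already recorded after Theorem~\ref{thm:Rob}. The main obstacle I anticipate is purely bookkeeping: ensuring that the sum-rule for tangent cones applied in the passage from \eqref{eq:TgSub2} to the explicit form above is consistent with the boundary point $-v_\mathcal{K}$ on the product $\mathbb{S}_{\mathcal{K}(w)}\times\mathbb{B}_{\mathcal{K}\setminus\mathcal{K}(w)}$ and with the sign conventions from Proposition~\ref{prop:Crit}; the algebraic identity $\la A_\mathcal{I}D^*_\mathcal{I}w,D^*_\mathcal{I}w\ra=0$ then falls out cleanly.
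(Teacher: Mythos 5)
Your proposal is correct and follows essentially the same route as the paper's proof: invoke the inclusion from Theorem~\ref{thm:Full}, chain \eqref{eq:TgSub1}, \eqref{eq:TgSub2} with the assumed equality \eqref{con:Tan}, use the explicit tangent-cone formula \eqref{eq:TSK} together with Proposition~\ref{prop:Crit} to show the pairing with $w$ vanishes, and conclude $D^*_Jw\in\Ker A_J=\R\{\oy_J\}$, hence $w\in\mathcal{E}$; the second part via Theorem~\ref{thm:Equi} and Theorem~\ref{thm:Rob} is also exactly the paper's argument. No gaps.
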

\begin{proof} Let us start to prove  the first part  by picking any $w\in \Ker\Phi \cap T_{\partial R^*({\rm Im}\, \Phi^*)}(x_0)$ satisfying condition~\eqref{con:Tan}.  By \eqref{eq:Tg}, we have 
\[
D_\mathcal{I}A_{\mathcal{I}}D^*_{\mathcal{I}}w  \in  T_{ \left({\rm Im}\Phi^*+D_{\mathcal{I}^c}\left(\mathbb{S}_{\mathcal{K}(w)}\times\mathbb{B}_{\mathcal{H}(w)}\right)\right)\cap D(\mathbb{S}_{\mathcal{I}})}(D_{\mathcal{I}}e_\mathcal{I}) 
.
\]
It follows from  \eqref{eq:TgSub1}, \eqref{eq:TgSub2}, and \eqref{con:Tan} 
 that
\begin{equation*}\label{eq:SKL}
D_\mathcal{I}A_{\mathcal{I}}D^*_{\mathcal{I}}w\in \Im \Phi^* + \Im D_{H} + D_{\mathcal{K}}T_{\mathbb{S}_{\mathcal{K}(w)} \times \B_{\mathcal{K}\setminus \mathcal{K}(w)}}(-v_\mathcal{K}).
\end{equation*}
Note also that 
\begin{equation}\label{eq:TSK}
T_{\mathbb{S}_{\mathcal{K}(w)}\times \B_{\mathcal{K}\setminus\mathcal{K}(w)}}(-v_{\mathcal{K}})=\{z\in \R^{\mathcal{K}}|\; \la z_J,v_J\ra=0, J\in \mathcal{K}(w), \la z_J,v_J\ra\ge0, J\in \mathcal{K}\setminus\mathcal{K}(w)\}. 
\end{equation}
Hence there exist $y \in \Im \Phi^* + \Im D_{\mathcal{H}}$ 
and $z \in T_{{\mathbb{S}_{\mathcal{K}(w)}} \times \B_{\mathcal{K}\setminus \mathcal{K}(w)}}(-v_\mathcal{K})$ such that $D_\mathcal{I}A_{\mathcal{I}}D^*_{\mathcal{I}}w = y + D_\mathcal{K}z$. 
As $w\in W(x_0)$ by \eqref{eq:Tg}, we get from Proposition~\ref{prop:Crit} that  $w\in \Ker \Phi\cap \Ker D_\mathcal{H}^*$, which gives us that $\la y,w\ra=0$. Consequently, we obtain that 
\[
\begin{aligned}
 \la D_\mathcal{I}A_{\mathcal{I}}D^*_{\mathcal{I}}w,w\ra &= \la y + D_{\mathcal{K}}z,w \ra \\
 &=\la z,D^*_\mathcal{K}w\ra\\
 &= \la z_{\mathcal{K}(w)}, D^{*}_{\mathcal{K}(w)}w \ra + \la z_{\mathcal{K}\setminus \mathcal{K}(w)}, D^{*}_{\mathcal{K}\setminus \mathcal{K}(w)}w \ra\\
&=\sum_{J\in \mathcal{K} \setminus \mathcal{K}(w)} \la z_{J}, D^{*}_{J}w\ra \qquad (\text{by}\; \eqref{eq:KCrit},\; \eqref{eq:KKw}, \;\text{and}\; \eqref{eq:TSK})\\
&=  0. \qquad (\text{as}\; \mathcal{K}\setminus\mathcal{K}(w)\subset \mathcal{H}(w))
\end{aligned}
\]
It is similar to \eqref{eq:Zero} that  $D^*_Jw\in \Ker A_{J}=\R\{\oy_J\}$ for any $J\in \mathcal{I}$, which means $w\in \mathcal{E}$ by \eqref{def:ME}. This verifies the first part.

For the second part, let us suppose that condition~\eqref{con:Tan} holds for any $w\in W(x_0)$ and $x_0$ is the unique solution. It follows from Theorem~\ref{thm:Equi} that $
W(x_0)\cap  \mathcal{E}=\{0\}.$
 By using the first part, condition~\eqref{con:SOSC} is satisfied. By Theorem~\ref{thm:Rob}, $x_0$ is stably recoverable.  The proof is completed. 
\end{proof}

All we have to do now is to find a condition that \eqref{con:Tan} is satisfied. This is achievable by using the chain rule of tangent cone over a linear image \cite[Proposition~4.3.9]{AF90}.

\begin{Corollary}[A sufficient condition for stable recovery]\label{Cor:Equi} Let $x_0$ be a minimizer of problem \eqref{p:P12} and  $v \in \partial \|D^{*}x_0\|_{1,2}$ with $ Dv \in \Delta(x_0)$ and index pair $\mathcal{K}$ and $\mathcal{H}$ in \eqref{def:KH}.  Stable recovery occurs at $x_0$ if and only if it is the unique (strong) minimizer of problem~\eqref{p:P12} provided that the following condition is satisfied
\begin{equation}\label{con:NoTg}
\begin{cases}D_{\mathcal{K}}z \in \Im \Phi^* + \Im D_{\mathcal{H}} \\
\la z_J, v_J \ra = 0, J \in \mathcal{K}(w)\\
\la z_J, v_J \ra \ge 0, J \in \mathcal{K}\setminus\mathcal{K}(w)
\end{cases}
\Longrightarrow\quad  z = 0\quad \mbox{for any}\qquad w\in W(x_0).
\end{equation}
Consequently, stable recovery occurs at $x_0$ if the following two conditions hold  
\begin{equation}\label{con:Now}
W(x_0)\cap \mathcal{E}=\{0\}\quad \text{and}\quad \begin{cases}D_{\mathcal{K}}z \in \Im \Phi^* + \Im D_{\mathcal{H}} \\
\la z_J, v_J \ra \ge 0, J \in \mathcal{K}
\end{cases}
\Longrightarrow\quad  z = 0.
\end{equation}
\end{Corollary}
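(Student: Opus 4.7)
The plan is to invoke Theorem~\ref{thm:Equiva}: if condition~\eqref{con:Tan} holds for every $w \in W(x_0)$, then stable recovery at $x_0$ is equivalent to solution uniqueness, which by Theorem~\ref{thm:Equi} amounts to $W(x_0) \cap \mathcal{E} = \{0\}$. Since the ``stable recovery $\Rightarrow$ uniqueness'' direction was already recorded after Theorem~\ref{thm:Rob}, the work reduces to verifying \eqref{con:Tan} under the hypothesis \eqref{con:NoTg}.

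To do this, I would set $L := \Im \Phi^* + \Im D_{\mathcal{H}}$ and $S := \mathbb{S}_{\mathcal{K}(w)} \times \B_{\mathcal{K}\setminus \mathcal{K}(w)}$. Direct computation from the definition of the tangent cone gives
\begin{equation*}
T_S(-v_{\mathcal{K}}) \;=\; \prod_{J \in \mathcal{K}(w)} \{z_J : \la z_J, v_J\ra = 0\} \;\times\; \prod_{J \in \mathcal{K}\setminus\mathcal{K}(w)} \{z_J : \la z_J, v_J\ra \geq 0\},
\end{equation*}
namely the product of tangent hyperplanes to the sphere factors and tangent half-spaces to the ball factors. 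This is a closed convex polyhedral cone, so $L + D_{\mathcal{K}}\, T_S(-v_{\mathcal{K}})$ is polyhedral (hence automatically closed), matching the right-hand side of \eqref{con:Tan}. The inclusion ``$\supset$'' in \eqref{con:Tan} is automatic from the tangent calculus used to derive \eqref{eq:TgSub1}--\eqref{eq:TgSub2}. For the reverse inclusion, I would apply the chain rule for tangent cones under continuous linear images \cite[Proposition~4.3.9]{AF90} to the linear map $F(u,h,k) := \Phi^{*}u + D_{\mathcal{H}}h + D_{\mathcal{K}}k$ restricted to $\R^m \times \R^{|\mathcal{H}|} \times S$; its qualification hypothesis, after dualization against the polar of the polyhedral cone $T_S(-v_{\mathcal{K}})$, translates precisely to: the only $z \in T_S(-v_{\mathcal{K}})$ with $D_{\mathcal{K}}z \in L$ is $z = 0$, which is exactly \eqref{con:NoTg}. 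With \eqref{con:Tan} then in hand for every $w \in W(x_0)$, Theorem~\ref{thm:Equiva} yields the biconditional.

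For the consequence under \eqref{con:Now}, the first requirement $W(x_0)\cap \mathcal{E} = \{0\}$ is solution uniqueness by Theorem~\ref{thm:Equi}. The second requirement implies \eqref{con:NoTg} for every $w \in W(x_0)$: any $z$ meeting the hypotheses of \eqref{con:NoTg} automatically has $\la z_J, v_J\ra \geq 0$ on all of $\mathcal{K} = \mathcal{K}(w) \cup (\mathcal{K}\setminus\mathcal{K}(w))$, so the sign-only implication in \eqref{con:Now} forces $z=0$. The first part of the corollary then gives stable recovery. The main obstacle is the chain-rule step: one must verify carefully that the abstract qualification in \cite[Proposition~4.3.9]{AF90} reduces to \eqref{con:NoTg}, which relies on the polar-cone computation for the polyhedral cone $T_S(-v_{\mathcal{K}})$ together with the subspace structure of $L$ to secure equality (not just inclusion) in \eqref{con:Tan}.
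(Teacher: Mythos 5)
Your proposal follows essentially the same route as the paper: reduce to verifying condition~\eqref{con:Tan} so that Theorem~\ref{thm:Equiva} (together with Theorem~\ref{thm:Equi}) yields the biconditional, compute $T_{\mathbb{S}_{\mathcal{K}(w)}\times\B_{\mathcal{K}\setminus\mathcal{K}(w)}}(-v_{\mathcal{K}})$ explicitly as a polyhedral cone, invoke the chain rule for tangent cones under a linear image \cite[Proposition~4.3.9]{AF90} with the kernel-type qualification reducing to \eqref{con:NoTg}, use polyhedrality to drop the closure, and finally observe that \eqref{con:Now} implies \eqref{con:NoTg} because $\mathcal{K}(w)\subset\mathcal{K}$. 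One caveat on your chain-rule step: if you literally apply the qualification $\Ker F\cap T=\{0\}$ to the map $F(u,h,k)=\Phi^*u+D_{\mathcal{H}}h+D_{\mathcal{K}}k$ on $\R^m\times\R^{|\mathcal{H}|}\times S$, it fails in general (any $u\in\Ker\Phi^*$ gives a nonzero kernel element), so the ``dualization'' you gesture at must be replaced by the paper's cleaner parametrization: take $\Lambda(y,z)=y+D_{\mathcal{K}}z$ acting on $\Omega=(\Im\Phi^*+\Im D_{\mathcal{H}})\times S$, so that the $y$-component is constrained to the subspace and $\Ker\Lambda\cap T_{\Omega}(\hat y,-v_{\mathcal{K}})=\{0\}$ is literally equivalent to \eqref{con:NoTg}. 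With that adjustment your argument matches the paper's proof.
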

\begin{proof} For the first part, we just need to show that condition~\eqref{con:NoTg} implies condition \eqref{con:Tan} for any $w\in W(x_0)$. Define the linear operator  $\Lambda:\R^n\times \R^{\mathcal{K}}\to\R^n$ with $\Lm(y,z):=y+ D_{\mathcal{K}}z$ for any $(y,z)\in \R^n\times \R^{\mathcal{K}}$. Define further the set $\Omega:=(\Im \Phi^* + \Im D_{\mathcal{H}}) \times \left(\mathbb{S}_{\mathcal{K}(w)} \times \B_{\mathcal{K}\setminus\mathcal{K}(w)}\right)$ and the point $\hat y:=Dv-D_{\mathcal{H}}v_\mathcal{H}\in \Im \Phi^*+\Im  D_\mathcal{H}$. Note that $D_\mathcal{I}e_\mathcal{I}=\hat y-D_\mathcal{K}v_\mathcal{K}=\Lambda(\hat y, -v_\mathcal{K})$ and that 
\begin{equation}\label{eq:Tgeq}
T_{{\rm Im} \Phi^* + {\rm Im} D_{\mathcal{H}} + D_{\mathcal{K}}(\mathbb{S}_{\mathcal{K}(w)} \times \B_{\mathcal{K}\setminus\mathcal{K}(w)})}(D_{\mathcal{I}}e_{\mathcal{I}})=T_{\Lambda(\Omega)}(\Lambda(\hat y, -v_\mathcal{K})).   
\end{equation}
According to  the chain rule of tangent cone over a linear image \cite[Proposition~4.3.9]{AF90}, we have
\begin{equation}\label{eq:TgIm}
\cl(\Lambda(T_{\Omega}(\hat y,-v_\mathcal{K})))=T_{\Lambda(\Omega)}\Lm(\hat y,-v_\mathcal{K}),
\end{equation}
provided that 
\begin{equation}\label{con:Ker}
    \Ker \Lambda \cap T_{\O}(\Lambda( \hat y,-v_\mathcal{K})) = \{0\}. 
\end{equation}
Let us show that this condition is equivalent to \eqref{con:NoTg}. Indeed, note first that 
\begin{equation}\label{eq:TgO}
\begin{aligned}
T_{\O}(\Lambda( \hat y,-v_\mathcal{K}))&=T_{\left({\rm Im}\,\Phi^*+{\rm Im}\,D_\mathcal{H}\right)}(\hat y)+T_{\left(\mathbb{S}_{\mathcal{K}(w)} \times \B_{\mathcal{K}\setminus\mathcal{K}(w)}\right)}(-v_\mathcal{K})\\
&={\rm Im}\,\Phi^*+{\rm Im}\,D_\mathcal{H}+T_{\left(\mathbb{S}_{\mathcal{K}(w)} \times \B_{\mathcal{K}\setminus\mathcal{K}(w)}\right)}(-v_\mathcal{K}).
\end{aligned}
\end{equation}
By \eqref{eq:TSK},  $(y,z)\in\Ker \Lambda \cap T_{\O}(\Lambda( \hat y,-v_\mathcal{K}))$ iff $y\in \Im \Phi^*+\Im D_\mathcal{H}$, $z\in T_{\left(\mathbb{S}_{\mathcal{K}(w)} \times \B_{\mathcal{K}\setminus\mathcal{K}(w)}\right)}(-v_\mathcal{K})$, and $y+D_\mathcal{K}z=0$. As $z=0$ implies $y=0$ in the latter equivalence, condition~\eqref{con:Ker} means that 
\[
[D_\mathcal{K}z\in \Im \Phi^*+\Im D_\mathcal{H}\quad \mbox{and}\quad z\in T_{\left(\mathbb{S}_{\mathcal{K}(w)} \times \B_{\mathcal{K}\setminus\mathcal{K}(w)}\right)}(-v_\mathcal{K})]\quad\Longrightarrow\quad z=0,
\]
which is equivalent to \eqref{con:Now} due to the formula of the  tangent cone in \eqref{eq:TSK}. 
Back to the expression \eqref{eq:TgIm} under condition~\eqref{con:Ker}, the closure there is superfluous, as the tangent $T_{\O}(\Lambda( \hat y,-v_\mathcal{K}))$ is a polyhedron due to the formulas \eqref{eq:TgO} and \eqref{eq:TSK}. This together with \eqref{eq:Tgeq} verifies the condition~\eqref{con:Tan} whenever the condition~\eqref{con:NoTg} is satisfied. The first part then follows from Theorem~\ref{thm:Equiva}. 

For the second part, note from \eqref{eq:KKw} that $\mathcal{K}(w)\subset \mathcal{K}$ for any $w\in W(x_0)$. Thus, the condition \eqref{con:Now} is  sufficient for \eqref{con:NoTg}. By the first part and the characterization of solution uniqueness in Theorem~\ref{thm:Equi}, stable recovery occurs at $x_0$ whenever \eqref{con:Now} is satisfied.  
\end{proof}

Next, we give a direct consequence in the case when $D^*$ is the identity matrix. 

\begin{Corollary}[A sufficient condition for stable recovery of group sparsity linear inverse problems]\label{cor:Id}
Let $x_0$ be an optimal solution of problem \eqref{p:P12} when $D^*=\Id$ is the identity matrix. Let $v \in \partial \|x_0\|_{1,2}$ with the index set $\mathcal{K}$ in \eqref{def:KH}.  Stable recovery occurs at $x_0$ if   
\begin{equation}\label{con:NowI}
W(x_0)\cap \mathcal{E}=\{0\}\quad \text{and}\quad \begin{cases}z \in \Phi_\mathcal{K}^*(\Ker \Phi^*_\mathcal{I})\\
\la z_J, v_J \ra \ge 0, J \in \mathcal{K}
\end{cases}
\Longrightarrow\quad  z = 0.
\end{equation}    
\end{Corollary}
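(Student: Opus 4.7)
The plan is to obtain Corollary~\ref{cor:Id} as a direct specialization of Corollary~\ref{Cor:Equi} with $D=\Id$. Since the identity matrix is surjective, Corollary~\ref{Cor:Equi} applies in full, and the entire task reduces to rewriting the two-part condition~\eqref{con:Now} in the simpler form~\eqref{con:NowI} that no longer mentions the matrix $D$ or the subspace $\Im D_\mathcal{H}$.

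First I would unpack what the relevant operators become when $D=\Id$. The operator $D_\mathcal{K}:\R^\mathcal{K}\to\R^n$ embeds a vector $z\in\R^\mathcal{K}$ as the element of $\R^n$ whose block on $\mathcal{K}$ equals $z$ and whose blocks on $\mathcal{I}$ and $\mathcal{H}$ vanish; analogously $\Im D_\mathcal{H}$ is the subspace of $\R^n$ consisting of vectors supported on $\mathcal{H}$. Decomposing coordinates along the partition $\mathcal{J}=\mathcal{I}\cup\mathcal{K}\cup\mathcal{H}$, the inclusion $D_\mathcal{K}z\in\Im\Phi^*+\Im D_\mathcal{H}$ amounts to the existence of $u\in\R^m$ satisfying $\Phi^*_\mathcal{I}u=0$ and $\Phi^*_\mathcal{K}u=z$, the $\mathcal{H}$-block being absorbed by the freedom in $\Im D_\mathcal{H}$. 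This is precisely the condition $z\in\Phi^*_\mathcal{K}(\Ker\Phi^*_\mathcal{I})$ that appears in~\eqref{con:NowI}.

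Consequently the implication in the second part of~\eqref{con:Now} becomes the implication in~\eqref{con:NowI}, while the equality $W(x_0)\cap\mathcal{E}=\{0\}$ is common to both. Invoking the second part of Corollary~\ref{Cor:Equi} then yields stable recovery at $x_0$ and, via Theorem~\ref{thm:Equi}, also identifies $x_0$ as the unique (strong) minimizer. No real obstacle arises: surjectivity of $D^*$ is automatic, the tangent-cone chain rule used in Corollary~\ref{Cor:Equi} needs no re-verification, and the only care required is the coordinate bookkeeping across the three blocks $\mathcal{I}$, $\mathcal{K}$, and $\mathcal{H}$.
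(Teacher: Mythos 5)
Your proposal is correct and follows essentially the same route as the paper: both reduce the corollary to Corollary~\ref{Cor:Equi} by rewriting $D_\mathcal{K}z\in\Im\Phi^*+\Im D_\mathcal{H}$ blockwise as $(0_\mathcal{I},z,0_\mathcal{H})\in\Im\Phi^*+(0_\mathcal{I},0_\mathcal{K},\R^\mathcal{H})$, i.e., the existence of $u$ with $\Phi^*_\mathcal{I}u=0$ and $\Phi^*_\mathcal{K}u=z$, which is exactly $z\in\Phi^*_\mathcal{K}(\Ker\Phi^*_\mathcal{I})$. The coordinate bookkeeping you describe is precisely the paper's argument.
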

\begin{proof}
In the spirit of Corollary \ref{Cor:Equi}, we show that condition \eqref{con:NowI} is deduced  from \eqref{con:Now} when $D^*$ is an identity matrix. The right-hand side condition in \eqref{con:Now} can be reformulated as 
\begin{equation*}
 \begin{cases} (0_{\mathcal{I}},z,0_{\mathcal{H}})\in \Im \Phi^*+(0_{\mathcal{I}},0_{\mathcal{K}},\R^\mathcal{H}) \\
\la z_J, v_J \ra \ge 0, J \in \mathcal{K}
\end{cases}
\Longrightarrow\quad  z = 0,
\end{equation*}    
which is equivalent to
\begin{equation*}
 \begin{cases}\exists\, u\in\R^m:  0_{\mathcal{I}}=\Phi^*_{\mathcal{I}}u, z=\Phi^*_{\mathcal{K}}u\\
\la z_J, v_J \ra \ge 0, J \in \mathcal{K}
\end{cases}
\Longrightarrow\quad  z = 0.
\end{equation*}  
This condition is exactly the one in  \eqref{con:NowI}. The rest of the proof follows from Corollary~\ref{Cor:Equi}.
\end{proof}

Condition~\eqref{con:NoTg} still depends on each $w\in W(x_0)$, but conditions~\eqref{con:Now} and \eqref{con:NowI} fully independent of $w$. It is quite possible to check these two conditions by numerical methods; see our Section~5 for further details. While condition \eqref{con:NoTg} holds if $x_0$ is a sharp minimizer, conditions~\eqref{con:Now} and \eqref{con:NowI} are totally different from sharp minima. Let us consider the following example of group sparsity linear inverse problems with parameters to see when condition~\eqref{con:NowI} is satisfied, but $x_0$ is not a sharp optimal solution, i.e., condition \eqref{con:G11} in \cite{G11,H13} fails.  

\begin{Example}\label{ex:GL}
\rm Consider the following $\ell_1/\ell_2$ optimization problem with four groups
\begin{equation}\label{p:E15}
\min_{x\in \R^6}\quad  R(x) = \sqrt{x_1^2+x_2^2}+ \sqrt{x_3^2+x_4^2}+\sqrt{x_5^2+x_6^2}+ \sqrt{x_7^2+x_8^2}\quad \mbox{subject to}\quad \Phi x=y_0   
\end{equation}
with $\Phi=\begin{pmatrix} 1&0&0&a_4&a_5&a_6&a_7&a_8\\0&1&0&1&0&1&0&1\\0&0&1&b_4&b_5&b_6&b_7&b_8\end{pmatrix} 
$,  $x_0=(0,1,0,0,0,0,0,0)^T$ (a group sparse vector), and $y_0=(0,1,0)^T$. Note that 
 $\mathcal{I}=\{(1,2)\}$ and  $e_\mathcal{I}=(0,1)^T$. It follows from \eqref{def:ME} that 
\begin{equation}\label{eq:exE}
\mathcal{E} = \{x \in \mathbb{R}^8|\; x_1 = 0\}.
\end{equation}
Moreover, observe that  $v=(0,1, 0,1,0, 1,0,1)^T\in \Im \Phi^*\cap \partial R(x_0)$ is a dual certificate of problem~\eqref{p:E15}. Thus $x_0$ is an optimal solution of problem~\eqref{p:E15}. Note also that $\mathcal{K}=\{(3,4), (5,6), (7,8)\}$ and $\mathcal{H}=\emptyset$ by \eqref{def:KH}.  According to  Proposition~\ref{prop:Crit}, $w\in W(x_0)$ if and only if  there exist $s_1,s_2,s_3\ge 0$ such that $w_\mathcal{K}=(0,s_1,0,s_2,0,s_3)^T$ and $w\in \Ker \Phi$, which together with the format of $\Phi$ means 
\begin{equation}\label{eq:Criw}
w=(-a_4s_1-a_6s_2-a_8s_3,-s_1-s_2-s_3,0,s_1,0,s_2,0,s_3)^T \quad \mbox{with}\quad b_4s_1+b_6s_2+b_8s_3=0.
\end{equation}
Combining this with \eqref{eq:exE} gives us that 
\begin{equation}\label{eq:kerbd}
    W(x_0)\cap \mathcal{E}=\{0\}\quad \mbox{if and only if}\quad \Ker \begin{pmatrix} a_4&a_6&a_8\\b_4&b_6&b_8\end{pmatrix}\cap \R^3_+=\{0\},
\end{equation}
which becomes the characterization for solution uniqueness at $x_0$ by Theorem~\ref{thm:Equi}. 
Note also that $\Ker \Phi^*_\mathcal{I}=\R(0,0,1)^T$ and  that 
\[
\Phi^*_\mathcal{K}(\Ker \Phi^*_\mathcal{I})=\R(1,b_4,b_5,b_6,b_7,b_8)^T. 
\]
The right-hand condition in \eqref{con:NowI} means that 
\[ \Phi_\mathcal{K}^*(\Ker \Phi^*_\mathcal{I}) \cap \{z \in \R^6 | \; \la z_J, v_J \ra \ge 0, J \in \mathcal{K} \} = \{0\},
\]
which can be rewritten based on the representation of $\Phi^*_\mathcal{K}(\Ker \Phi^*_\mathcal{I})$ and $v_\mathcal{K} = (0,1,0,1,0,1)$ as
\[
\R(1,b_4,b_5,b_6,b_7,b_8)\cap(\R\times \R_+\times \R\times \R_+\times \R\times \R_+)=\{0\}.
\]
The above condition happens if and only if the system $b_4s \geq 0, b_6s \geq 0,b_8s \geq 0$ for some $s \in \R$ yields $s = 0$, which means $b_4, b_6, b_8$ cannot have the same sign. Thus one of the product $b_4b_6,b_6b_8,b_8b_4$ must be negative, i.e., 
\[
\min\{b_4b_6,b_6b_8,b_8b_4\}<0.
\]
Combining this with \eqref{eq:kerbd} tells us that condition~\eqref{con:NowI} is equivalent to 
\begin{equation}\label{con:NowEx}
   \Ker \begin{pmatrix} a_4&a_6&a_8\\b_4&b_6&b_8\end{pmatrix}\cap \R^3_+=\{0\}\quad \mbox{and}\quad  \min\{b_4b_6,b_6b_8,b_8b_4\}<0. 
\end{equation}
There are many choices of $(a_k,b_k)$, $k=4,\ldots ,8$ satisfying this condition that guarantees stable recovery at $x_0$ as in Corollary~\ref{cor:Id}. At the same time, the sharp minimum at $x_0$ is equivalent to $W(x_0)=\{0\}$, which means by \eqref{eq:Criw} that 
\[
\Ker \begin{pmatrix} b_4&b_6&b_8\end{pmatrix}\cap \R^3_+=\{0\},\;\;\mbox{ i.e., }\;\; b_4,b_6,b_8>0 \;\mbox{or}\; b_4,b_6,b_8<0,
\]
which is completely independent of \eqref{con:NowEx}. For example, by choosing $a_4,a_6,a_8>0$ and $b_4b_6<0$, condition \eqref{con:NowEx} is satisfied. i.e., $x_0$ is stably recoverable, but the above condition and \eqref{con:G11} from \cite{G11,H13} fail, as  $W(x_0)\neq \{0\}$.  This reconfirms that condition~\eqref{con:NowI} is independent of sharp minima. $\hfill\triangle$
\end{Example}

\section{Numerical experiments}
Implementation details: Our experiments are conducted in Python version 3.12.7 on a Macbook Pro with M2 Pro CPU and 16GB of RAM. We solve convex problems using cvxpy package \cite{DB16} with default parameters. All non-convex problems are solved using Gurobi package \cite{G24} version 11.0.3 with ``TimeLimit'' set to $5$ seconds and ``NonConvex'' parameter set to $2$ for Gurobi to handle non-convex objective function using appropriate algorithm. We disable Gurobi's model reduction functionality by setting the ``Presolve'' parameter to $0$, enabling ``Presolve''could cause numerical instability.\\
We mainly show that our sufficient condition in \eqref{con:Now} is quite verifiable with favorable numerical results. Recall problem~\eqref{p:P12} here
\begin{equation}\label{p:check_reco1}
\min_{x \in \R^{n}} \|D^{*}x\|_{1,2}\hspace*{0.2cm} \text{subject to} \hspace*{0.2cm} \Phi x = \Phi x_{0},
\end{equation}
where $D:\R^p\to \R^n$ and $\Phi:\R^n\to \R^m$ are linear operators. For numerical experiments, $\Phi$ is an $\R^{m\times n}$ random matrix drawn from the independent and identically distributed standard normal distribution $\mathcal{N}(0,1)$. This is a convex optimization problem, we use the cvxpy package in \cite{DB16} to solve it and derive an optimal solution $x_{\text{opt}}$. If it satisfies $\dfrac{\|x_{\text{opt}} -  x_{0}\|}{\|x_{0}\|} < 10^{-3}$, the signal $x_{0}$ is said to be recovered exactly. We shall only analyze at those $x_0$. It is quite reasonable to think $x_0$ is the unique solution in this case, but we do have a solution uniqueness test to make sure that.  
Since the main contribution of this paper is about stable recovery without sharp minima, all the cases of sharp minima need to be excluded from our numerical experiments. In order to do so, we compute the so-called {\em Source Coefficient} $\rho(x_0)$ \cite[Remark~4.5 and (6.1)]{FNT23}, which is the optimal value of 
\begin{equation}\label{p:dualcert}
\min_{t>0,\, z\in \R^p,\, w\in \R^m}\qquad t \qquad  \text{subject to}\qquad  D(z+e)+\Phi^*w = 0, \hspace*{0.2cm} \|z_{J}\|^{2} \leq t, J \in \mathcal{I}^c,
\end{equation}
where $e$ is from \eqref{def:e}. According to \cite[Theorem~4.6]{FNT23}, sharp minima occur when $\rho(x_0)<1$, as the Restricted Injectivity \eqref{def:RI} is usually satisfied when choosing random matrix $\Phi$ in \eqref{p:check_reco1}.  This problem can be solved by using the cvxpy package \cite{DB16} again.

Due to the possible error in solving two optimization problems~\eqref{p:check_reco1} and \eqref{p:dualcert}, we classify $x_0$ to be a nonsharp minimizer when  $\rho(x_0) \in (0.95,1.05)$. This is how we obtained Figure~1 with the red curves of non-sharp minimizers.   The optimal solution $\bar z$ achieved from solving \eqref{p:dualcert} gives us a dual pre-certificate $v=\bar z+e\in \partial \|D^*x_0\|_{1,2}$ with $Dv\in \Im \Phi^*$.  With this $v$, we can define the set $\mathcal{K}$ and $\mathcal{H}$ from \eqref{def:KH} as follows
\[
\mathcal{K}:=\{J\in  \mathcal{I}^c|\; \|v_J\| > 0.99\}\quad \mbox{and} \quad\mathcal{H}:= \mathcal{I}^c\setminus\mathcal{K}=\{J\in  \mathcal{I}^c|\; \|v_J\|\le 0.99\}.
\]
By Theorem \ref{thm:Equi}, $x_0$ is the unique optimal solution if
$
W(x_0)\cap \mathcal{E}=\{0\}$. Due to the formula of $W(x_0)$ in \eqref{eq:KCrit},  the latter  is equivalent to the claim that the following nonconvex {\em quadratic programming}  problem
\begin{equation}\label{p:check_unique}
\min_{w \in \R^{n}} \hspace*{0.2cm} -\| w\|^2 \quad 
\text{subject to}\quad 
\Phi w = 0,\;
D^{*}_Jw \in \R \bar{y}_J, J \in \mathcal{I},\; D^{*}_Jw \in \R_{+} v_J, J \in \mathcal{K},\;
D^{*}_{\mathcal{H}}w = 0
\end{equation}
has the optimal value to be zero. We use Gurobi solver \cite{G24} to solve the above problem. When the optimal value is zero, we successfully ensure solution uniqueness and proceed to check stable recovery at recovered solution by \eqref{con:Now}. Similarly,  condition \eqref{con:Now} can be verified  by solving   a non-convex quadratic programming problem, at which Gurobi solver can solve it; see problems \eqref{p:Check1} and \eqref{p:checkTV1} below.

\subsection{Experiment 1}
In the first experiment, when $D^*=\Id$ in \eqref{p:check_reco1}, we carry out numerical tests to see if a unique (non-sharp) optimal solution is stably recoverable by using our sufficient condition \eqref{con:NowI}. Our group-sparse signals $x_0$ are chosen with $2000$ components divided into $100$ non-overlapping groups \cite{RRN12} in three different cases of active (nonzero) groups $5, 10,$ and $15$.  For each number of measurements $m$ (with $10$ units), we generate $100$ random group-sparse vectors $x_0$ together with $100$ random i.i.d. Gaussian  matrices $\Phi\in \R^{m\times 2000}$.  After solving problem~\eqref{p:check_reco1}, we record the percentage of cases of $x_0$ that can be recovered at each $m$ in the green curves in the below figure. 

\begin{figure}[h]
\centering
\includegraphics[width=1\linewidth]{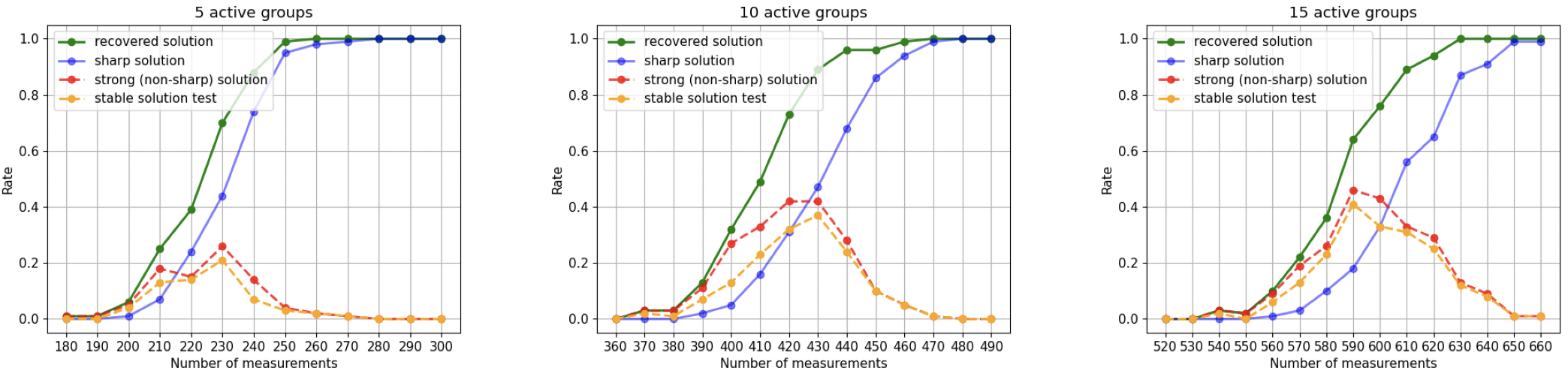}
\caption{Group sparsity problems with different active groups of the signals}
\label{gr_ls}
\end{figure}

Within the recovered solutions from green curve, we use the Source Coefficient from solving problem~\eqref{p:dualcert} to classify sharp solutions ($\rho<0.95$) with  blue curves and nonsharp solutions ($0.95<\rho<1.05$) with  red curves.  All the sharp optimal solutions from the blue curves are stably recoverable  in theory. We only need to check if other solutions on the red curves are stably recoverable by verifying our condition \eqref{con:NowI}. To make sure that the red curves contain unique (strong, nonsharp) solutions, we use Gurobi to solve problem~\eqref{p:check_unique}. The results are highly favorable,  all optimal values of  \eqref{p:check_unique} are super close to $0$ for $100\%$ cases of red curves, i.e., the condition $W(x_0)\cap \mathcal{E} = \{ 0\}$ in \eqref{con:NowI} is satisfied. To verify the other condition in \eqref{con:NowI}, we need the following non-convex minimization problem
\begin{equation}\label{p:Check1}
    \min_{w \in \R^m}\quad -\|\Phi^{*}_{\mathcal{K}}w\|^2\quad 
    \mbox{subject to}\quad
     \Phi^{*}_{\mathcal{I}}w = 0\;\;\mbox{and}\;\;
\la \Phi^*_J w, v_J \ra \ge 0, J \in \mathcal{K}.
\end{equation}
to have the  optimal value to close to $0$ (the possible error is $10^{-6}$). Using Gurobi solver again records cases for the orange curves in the above figures.  

The simulation results are summarized in Figure \ref{gr_ls}, in which we output three line graphs for the three testing scenarios when the number of active groups of $x_0$ are $5,10$ and $15$, respectively. Throughout the three figures, we see a similar pattern that the curves showing the percentages of recovered solutions starts at $0$, meaning no solution is successfully recovered when the number of measurements are low. This curve gradually increases as the number of measurements grows, this is when we also observe the proportion of sharp, strong (non-sharp) and stable solution also increases. Finally, when the number of measurements passes a certain point, the green lines and blue lines both reach the maximum at $1$ and plateau, which implies all $100$ solutions recovered were sharp. At the same time, the red and orange line both drop to $0$, meaning there is no stably recovered strong solution reported.

Furthermore, we see that the gap between the sharp solution (blue line) and the strong solution (red dashed line) increases with the number of active groups when measurements are limited. Our stable solution test (orange line) effectively captures a large portion of the strong non-sharp solutions. Across all three graphs and at every number of measurement tested, we observe that the orange curves nearly match the red ones for the cases of 5 active groups and 15 active groups, implying a majority of strong solutions are stable. This demonstrates the effectiveness of our sufficient condition in Corollary~\ref{cor:Id} when sharp solutions are not achievable.

\subsection{Experiment 2}
In the second experiment, we consider the problem \eqref{p:check_reco1} regularized by the isotropic total variation  seminorm, where $D^*=\nabla$ is the 2D discrete gradient defined in \eqref{eq:TV} and \eqref{eq:total}. In this experiment, multiple images $X_0$ are taken from the Extended MNIST dataset \cite{GSJA17} using the torchvision.datasets package of PyTorch library \cite{A19}. This dataset consists of $145600$ gray-scale images of various digits and letters, each of size $28 \times 28$. Since the dataset is large, we decide to randomly sample six images so that each of them was representative of a data point that belongs to a specific group sparsity distribution of $\nabla x$ in the dataset, we include the sampled images in Figure \ref{random_img}.
\begin{figure}[h]
\centering
\includegraphics[width=0.6\linewidth]{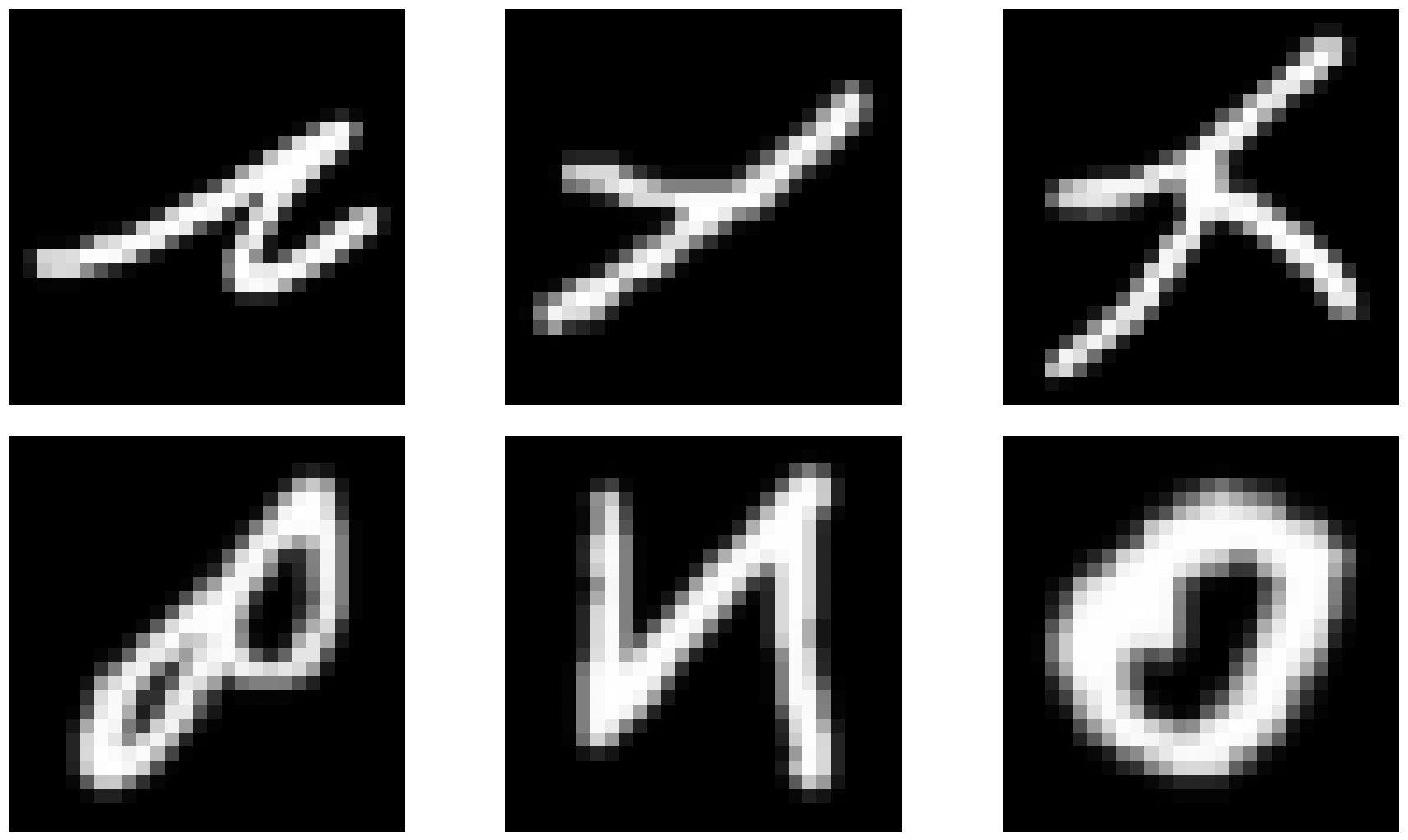} 
\caption{Images randomly sampled from the Extended MNIST dataset}
\label{random_img}
\end{figure}

We vectorize each image as column vectors to mimic all the steps from Experiment 1 with the only difference being the last step of checking our sufficient condition for stable recovery in \eqref{p:Check1}, we use condition \eqref{con:Now} instead of \eqref{con:NowI} and formulate the following non-convex optimization problem for it
\begin{equation}
\label{p:checkTV1}
    \min_{ a,b,u}\quad -\|u\|^2\quad 
    \mbox{subject to}\quad
     \Phi^{*}_{\mathcal{I}}a+D_{\mathcal{H}}b+D_{\mathcal{K}}u = 0\;\;\mbox{and}\;
     \; 
    \la u_J, v_J \ra \ge 0, J \in \mathcal{K}.
\end{equation}
The condition in \eqref{con:Now} is valid if the optimal value of this problem is very close to $0$ again.  We summarize below the results in Figure \ref{low_sparse}. 

\begin{figure}[h]
\centering
\includegraphics[width=1\linewidth]{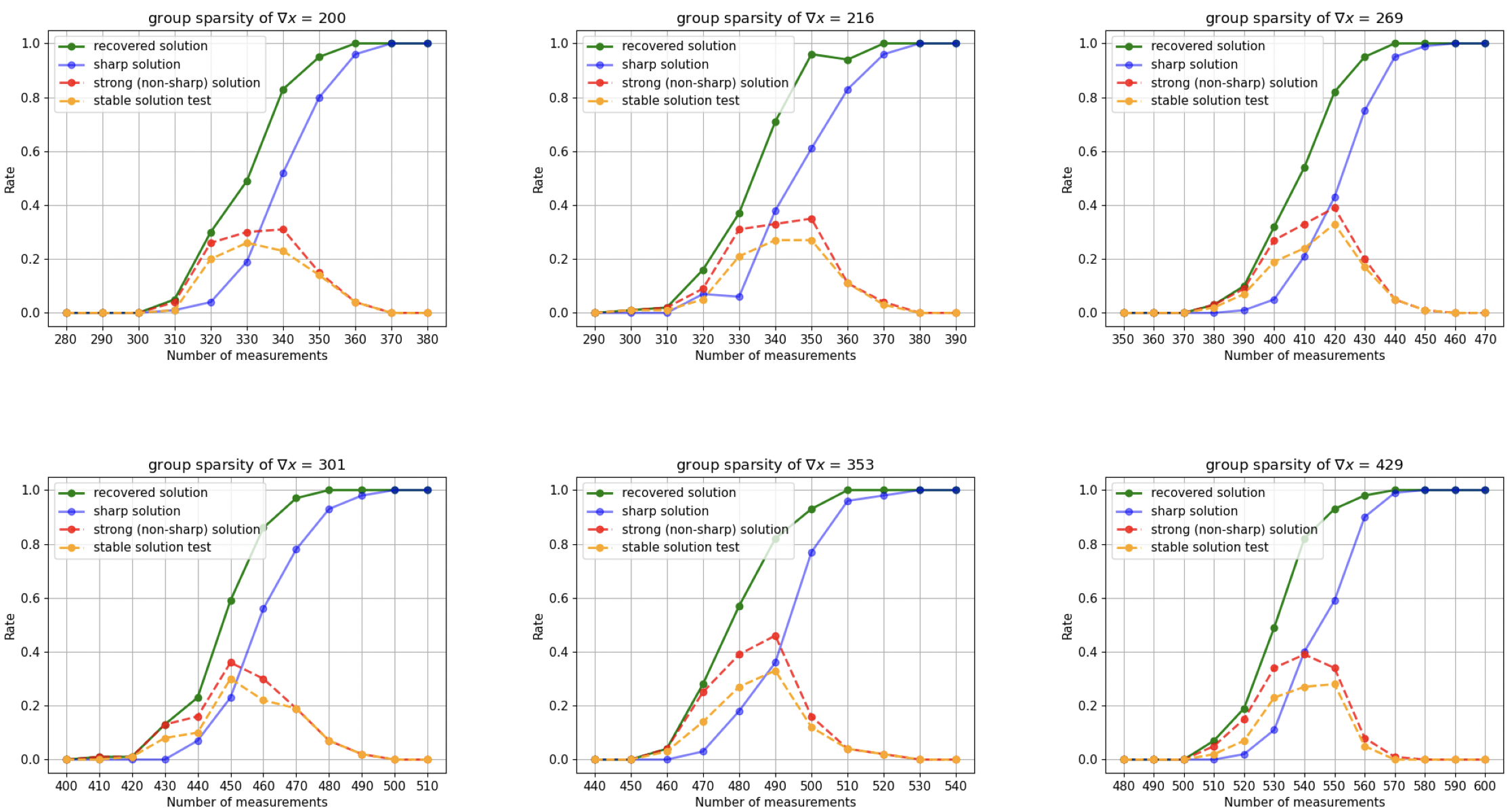} 
\caption{Isotropic total variation problems with different group sparsity of the images' gradients.}
\label{low_sparse}
\end{figure}

Each graph in Figure \ref{low_sparse} corresponds to a signal image in Figure \ref{random_img}, with group sparsity of the image gradients indicated at the top to illustrate how curve behavior varies with sparsity. The results in all graphs consistently show that there is a gap at most $40\%$ between the occurrence rates of strong (non-sharp) minima and sharp minima in scenarios with limited measurements. In every simulation, our conditions demonstrate its high capability in capturing the stable recovery of strong (non-sharp) minima. The success rate of classifying stable solutions over strong solutions via our condition~\eqref{con:Now} remains above $60\%$ in all tests and in some scenarios, exceeds $80\%$. This highlights  the effectiveness and practicality of the sufficient condition \eqref{con:Now} in image processing, particularly in the low-measurement regime, where achieving stable recovery is most challenging due to the lack of sharp minima. 


\section{Conclusion}
This paper presents comprehensive characterizations of stable recovery for low-complexity regularizers of ill-posed inverse problems, with particular attention to analysis group sparsity. Our results reveal that stable recovery inherently involves second-order structure, marking a departure from existing approaches that rely on first-order convex analysis. We demonstrate that stably recoverable solutions can be unique and strong minimizers, though not necessarily sharp. We also establish new verifiable sufficient conditions for stable recovery, which are supported by numerical experiments on group sparsity and isotropic total variation problems.

Future work involves extending our analysis to the case of nuclear norm, which is widely used in low-rank recovery problems. We also aim to deepen our understanding of the isotropic total variation problems, for which our current sufficient conditions exhibit a gap between strong and stably recoverable solutions. Bridging this gap remains an open and important problem. Furthermore, we plan to explore the role of Restricted Isometry Properties (RIP) \cite{CT05} in validating our conditions and to develop probabilistic bounds on the number of measurements required to ensure stable recovery with high probability.

\small
\bibliographystyle{abbrv}
\bibliography{bio} 
\end{document}